\newtheorem{thm}{Theorem}
\newtheorem{coro}[thm]{Corollary}
\newtheorem{lemma}[thm]{Lemma}
\theoremstyle{definition}
\newtheorem{defn}{Definition}
\title{A graph theoretical Gauss-Bonnet-Chern Theorem}
\author{Oliver Knill}
\date{November 21, 2011}
\address{
        Department of Mathematics \\
        Harvard University \\
        Cambridge, MA, 02138
        }
\subjclass{Primary:   05C10 , 57M15 }
\keywords{Graph theory, Gauss-Bonnet, Curvature}
\begin{document}
\maketitle


\begin{abstract}
We prove a discrete Gauss-Bonnet-Chern theorem $\sum_{g \in V} K(g) = \chi(G)$ for finite 
graphs $G=(V,E)$, where $V$ is the vertex set and $E$ is the edge set of the graph.
The dimension of the graph, the local curvature form $K$ and the 
Euler characteristic are all defined graph theoretically.
\end{abstract}

\section{Introduction}

The Gauss-Bonnet-Chern theorem $\int_M K(x) \; = \chi(M)$ for a compact
$d$-dimensional Riemannian manifold $M$ generalizes the Gauss-Bonnet 
theorem for compact $2$-dimensional surfaces. It averages the Euler curvature form
$K(p)= P(\kappa(p))$, the Pfaffian $P$ of the curvature form $\kappa(p)$ 
over the manifold, and leads to the Euler characteristic $\chi(M)$.
First tackled by Allendoerfer and Fenchel for surfaces 
in Euclidean space and extended by 
Allendoerfer and Weil to closed Riemannian manifolds,
it was Chern, whose 100'th birthday we celebrate this year, 
who first gave an intrinsic proof \cite{Chern44}.
Modern proofs use Fermionic calculus \cite{Rosenberg, Cycon} which becomes 
especially elegant in Patodi's approach \cite{Cycon}.  \\

We introduce here an Euler curvature form $K(p)$ for graphs
which only depends on the number $V_k$ of $k$-dimensional pieces of the unit sphere 
$S(p)$ at $p$ with $k=1, \dots, d-2$. Since by definition, we have a $d$-dimensional 
graph $G$, the unit sphere $S(p)$ at a point is a $(d-1)$-dimensional graph. 
While Puiseux type formulas allow to discretize curvature in two dimensions,
the lack of a natural second order difference calculus for graphs prevents a 
straightforward translation of the classical Euler curvature form to graph theory so that we construct
it from scratch using some assumptions on graphs so that everything stays elementary. 
For four dimensional graphs for example, the curvature form $K(p)$ at $p$ depends only on the 
number of edges and faces of the three dimensional sphere $S(p)$ centered at $p$. \\

For two-dimensional graphs, the Euler curvature form is $K(p)=1-E(p)/6$, 
where $E(p)$ is the arc length of the unit circle $S(p)$.
This curvature traces back to a combinatorial curvature defined in \cite{Gromov87}, where differential geometry is
pushed to more general spaces. Gromov's graph theoretical curvature is up to a normalization  defined by
$K(p) = 1-\sum_{j \in S(p)} (1/2-1/d_j)$, where $d_j$ are the cardinalities of the neighboring face degrees 
for vertices $j$ in the sphere $S(p)$. For two dimensional graphs, where necessarily all faces are triangles,
this simplifies to $d_j=3$ so that $K=1-|S|/6$, where $|S|$ is the cardinality of the sphere $S(p)$ of radius 1. 
If $S$ is a cyclic graph, then the arc length and vertex cardinality of the sphere are the same. 
The combinatorial curvature illustrates the $1/5$-condition $|S(p)|>5$ for non-positive curvature and 
that $|S|>6$ leads to strictly negative curvature.
For two-dimensional graph with or without boundary, the result
appears in \cite{elemente11}, where $K=6-E(p)$ was scaled to avoid fractions. 
We do not rescale the discrete Euler form in this article because with increasing dimensions, the denominator terms 
become larger and would have to be scaled in a dimension-dependent manner to be rendered integer valued. We could
work with $(d+1)! K(p)$ which is an integer but leave the fractions.
For two dimensional graphs, the sphere $S(p)$ of a vertex $p$ is one-dimensional graph 
without boundary, a cyclic graph. In three dimensions, the Euler curvature form $K$ 
vanishes identically. For four-dimensional graphs, which is already a new case, the curvature form 
is $K = 1-E/6 + F/10$, where $E$ is the number of edges of the unit sphere $S(p)$ and where $F$ is 
the number of faces in $S(p)$. 
For 5 dimensional graphs, the curvature form is $K = -E/6 + F/4 - C/6$. It would be zero if $3F=2(C+E)$. 
We do not know yet whether it is always identically zero, even so the sum over the entire graph is. 
Also  the examples of 5-dimensional graphs we looked at, $K=0$. \\

\begin{figure} \scalebox{0.30}{\includegraphics{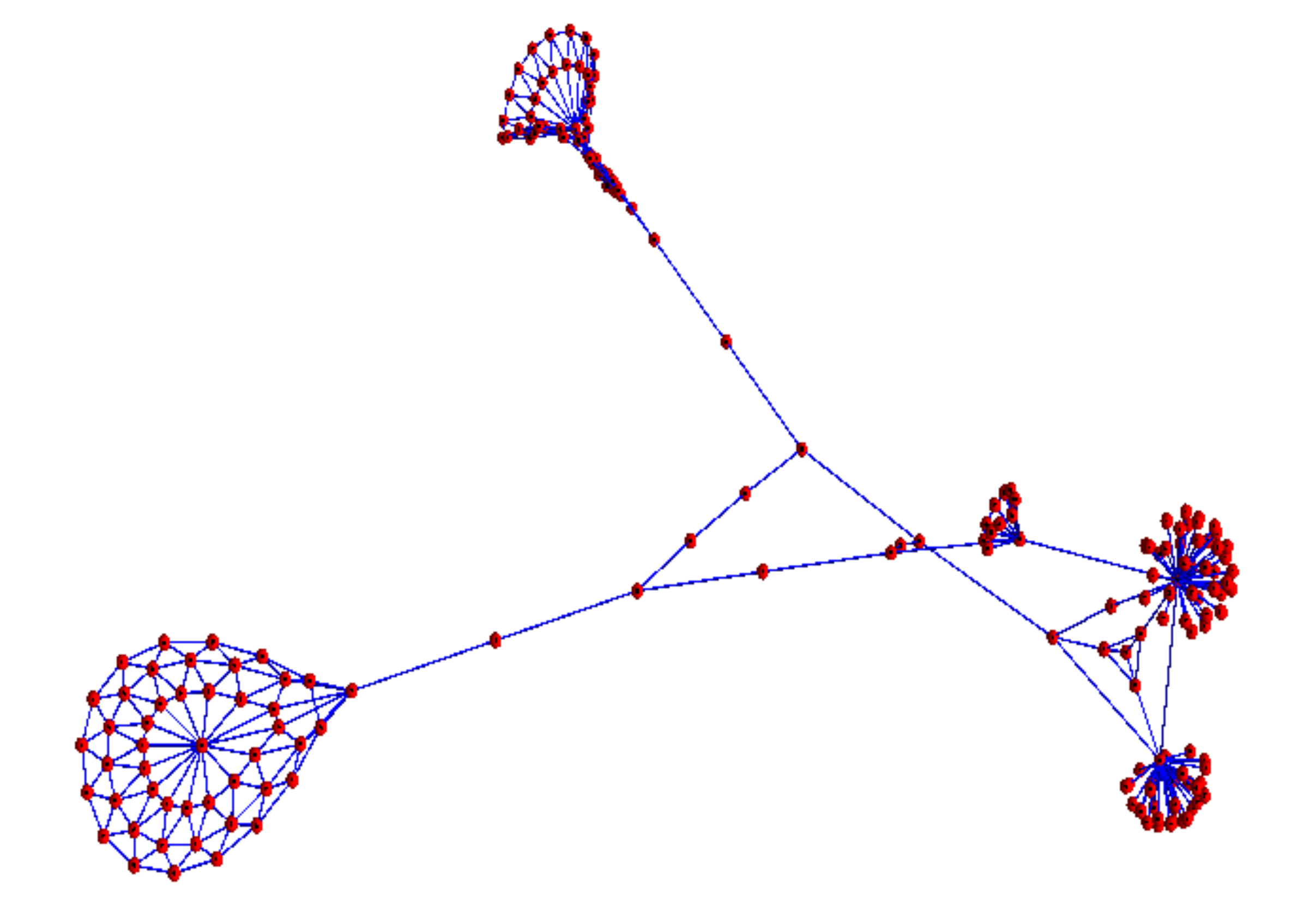}} \caption{
The sum of the 
curvature $K(v)$ is the Euler characteristic. The graph shown here
has dimension 32549/20580 and Euler characteristic $-4$. 
The curvatures $-21,-31/2,-19/6,-5/3,-3/2,-1,1/4$ appear once, $-1/2$ six times, $-1/4$ 3 times,
$60$ vertices have zero curvature, $50$ have curvature $1/6$ and $70$ have curvature $1/2$. 
These locally computed quantities add up to $-4$. 
} \label{example} \end{figure}

The core mechanism for relating global Euler characteristic and local curvature for general finite undirected 
graphs $G=(V,E)$ is the following: the total number $v_k$ of $k$ dimensional simplices in $G$ is related 
to the number $V_k(p)$ of $k-1$ dimensional simplices in the unit sphere $S_1(p)$ by $\sum_p V_{k-1}(p) = (k+1) v_k$. 
Then $\chi(G) = \sum_{k=0}^{\infty} (-1)^k v_k = \sum_{p \in V} K(p)$ with curvature 
\begin{equation}
\label{elementarycurvature}
 K(p) = 1+\sum_{k=1}^{\infty} (-1)^k \frac{V_{k-1}(p)}{k+1}  \;,
\end{equation}
where we use that $\sum_{p \in V} 1 =v_0$. 
We have just seen a very general Gauss-Bonnet theorem relating a local curvature form with a global quantity: \\

\begin{center}\parbox{12cm}{
{\bf For any finite graph $G=(V,E)$ without self-loops and multiple connections, 
we have $\sum_{p \in V} K(p) = \chi(G)$, where $K(p)$ is a curvature form defined in 
Equation~(\ref{elementarycurvature}) which depends only on the unit sphere $S_1(p)$ 
of a vertex $p$.} \\ } \end{center} 

Here are some examples. For a discrete graph $P_n$ with $n$ vertices without edges, 
the unit spheres are all empty and $K(p)=1$ sums up to $\chi(P_n)=n$.  
For the complete graph $K_n$, we have $\chi(K_n)=\sum_{k=0}^{n-1} (-1)^k {n \choose k+1}=1$ and 
since $S_1(p)$ is $K_{n-1}$ we have $K(p) = 1+\sum_{k=1}^{n-1} (-1)^k {n-1 \choose k}/(k+1) = 1/n$ adding up to $1$. 
For a tree, the curvature of a vertex $p$ is $1-{\rm deg}(p)/2$. 
The sum of the curvatures of a single tree is $1$ and the total curvature of a forest the number of trees. 
For a wheel graph $W_n$ with $n+1$ vertices and $2n$ edges and $n$ faces, we have $\chi(W_n)=(n+1)-2n+n=1$. The
central vertex $p$ has $K(p)=1-n/2+n/3=1-n/6$, the other $n$ points $q$ 
have $K(q)=1-3/2+2/3=1/6$. The total curvature is $1$. 
For an octahedron $G$, we have $v_0=6,v_1=12,v_2=8$ and $\chi(G)=6-12+8=2$ and 
since each unit sphere is the circular graph $C_4$ so that $K=1-4/2+4/3=1/6$
the total curvature is $\sum_p K(p)=2$. 
For a cube $G$ with $v_0=8,v_1=12,v_2=0$ we get $\chi(G)=-4$ reflecting the fact that we have $6$ holes.
Because $S_1(p)$ consists of $3$ discrete points, we have $K(p)=1-3/2=-1/2$ 
everywhere and the sum is $-4$. If we stellate the cube, it becomes
$2$ dimensional and the data $v_0=14,v_1=36,v_2=24$ lead to $\chi(G)=2$.
The original vertices have zero curvature $K(p)=1-6/2+6/3=0$, the six new ones
satisfy $K(p)=1-4/2+4/3=1/3$ and the total curvature is $2$. \\

As we have just seen, Gauss-Bonnet-Chern is elementary and works for general graphs. 
Despite that it can be stated and proven quickly, the topic is far from trivial. 
We explore it here in a setting close to differential geometry, where all unit spheres 
have properties known to spheres in $R^d$. While the topic is accessible to high school mathematics, 
there are many open questions. For example, we do not yet have cases of 5 dimensional 
graphs, where the Euler curvature form is not identically zero which makes us believe
that it is always zero in odd dimensions. Concrete examples can become tedious to 
compute by hand in higher dimensions and computers are essential for exploration.
A triangularization of the $5$ dimensional cube for example which can be realized 
as a convex polytope in $R^6$ leads to a graph with 536 vertices and 8216 edges which 
has five different types of $4$ dimensional unit spheres. Still, the curvature form 
$K(p) = -E/6 + F/4 - C/6$ for $d=5$ is identically zero at every point. 
Also combinatorial problems appear. Discrete versions of Bonnet-Schoenberg-Myers bounds 
show that there are only finitely many $d \geq 2$ dimensional graphs where all sectional 
curvatures are strictly positive meaning that every two dimensional subgraph 
has strictly positive curvature. Up to isomorphisms, there are six for $d=2$. How many are 
there in $d=3$? Also Ricci curvature $R(e)$, a function on the edge set $E$ which gives the sum over all 
curvatures of $2$ dimensional wheel graphs containing a given edge $e$ is 
unexplored. Any question in differential topology can be considered for $d$-dimensional graphs. 
For example, Chern proved that for four dimensional manifolds with nonpositive sectional 
curvatures, the Euler curvature form is nonnegative. Is it true that for a $4$ dimensional 
graph the curvature form $K = 1-E/6 + F/10$ is nonnegative if every two dimensional wheel 
subgraph of $G$ has $6$ or more spikes? 

\section{The theorem}

Throughout the paper we work in a graph theoretical setup. We especially do not
assume anywhere that graphs are embedded in any ambient Euclidean space, nor assume the graph to be a
triangularization of a smooth manifold. All graphs are finite undirected graphs
without self loops and without multiple edges. Any conceptual discretization of differential
geometry to graph theory requires to give a graph theoretical definition of what
dimension means for a graph. We work with the inductive graph dimension given
in \cite{elemente11} but make an additional assumption about the topology of the unit sphere.
The later is essential to be close to geometry. It even narrows the class of $d$ dimensional 
graphs because as we will see, there are $d$-dimensional graphs in which unit spheres can
have any prescribed Euler characteristic possible in the continuum. \\

Let $v_k$ denote the number of $k$-dimensional subgraphs of a graph $G=(V,E)$ which are
simplices, complete graphs $K_{k+1}$ with $(k+1)$ vertices.
The Euler characteristic of a $d$-dimensional graph $G$ is defined as
$$  \chi(G) = v_0-v_1+v_2-v_3 + \cdots + (-1)^{d} v_d \; . $$
For $d=2$ dimensions for example, we have $v_1=|V|=v, v_2=|E|=e$, $v_3=|F|=f$, the
number of vertices, edges and faces and $\chi(G) = v_0-v_1+v_2=v-e+f$.

\begin{defn}
A graph $G=(V,E)$ with $v=|V|$ vertices and $e=|E|$ edges is called a $d$-dimensional graph
without boundary if at every vertex $p$, the unit sphere
$S(p) = \{ v \in V \; | \; (p,v) \in E \; \}$ is a connected $(d-1)$-dimensional
graph without boundary of Euler characteristic $1+(-1)^{d-1}$. This is an inductive definition which 
also should apply to unit spheres. We additionally make the assumption that $(d+1) v_d = 2 v_{d-1}$.
Together with the assumption that a graph without any edges is zero-dimensional, this defines dimension inductively.
\end{defn}

By adding $1$ to the average of the dimensions of all spheres, we can define inductively dimension for all graphs, but the
dimension becomes fractional in general. Figure~(\label{example}) for example displays a fantasy graph of
dimension $1.5816...$. 

\section{The Euler Curvature form}

Define the fractions $a_n = (\frac{1}{2} - \frac{1}{n+2}) (-1)^{n}$,
and the integers $e_d = \frac{1+(-1)^{d}}{2}$. The later is $1$ for even $d$ and $0$ for odd $d$. 
Here are the first entries: \\

\begin{center}
\begin{tabular}{lllllllll} 
n      &    1 &     2 &     3 &     4 &     5 &     6 &     7 & ... \\ \hline
$a_n$  & -1/6 &   1/4 & -3/10 &   1/3 & -5/14 &   3/8 & -7/18 & ... \\
$e_n$  &    0 &     1 &     0 &     1 &     0 &     1 &     0 & ... \\
\end{tabular}
\end{center}

\begin{defn}
The Euler curvature form at a vertex $p$ in a $d$-dimensional graph $G$ is defined as
$K(p)=e_d+V_1 a_1$ if $d=2$ and
$$ K(p) = e_d+V_1 a_1 + V_2 a_2 + \cdots + V_{d-3} a_{d-3} + V_{d-2} (a_{d-2}+2a_{d-1}/d) \;   $$
for $d>2$.
\end{defn}

{\bf Remark 1.} The curvature form $K(p)$ depends on the $(d-2)$ sphere quantities $V_1(p), \dots ,V_{d-2}(p)$
if $d \geq 3$ and on only one sphere quantity $V_1(p)$ if $d=2$.\\
{\bf Remark 2.} If we would not use the hyper relations, we could work with the more elegant and symmetric
curvature $K(p) = e_d + \sum_{i=1}^{d-1} a_i V_i$. Without Euler characteristic assumption, we could  even
use the curvature Equation~(\ref{elementarycurvature}) given in the introduction. While less
geometric, it has the elegance of the Euler curvature in the continuum, if we define $V_{-1}=1$ so that
written as $K(p) = \sum_{k=0}^{\infty} (-1)^k V_{k-1}(p)/(k+1)$.  \\
{\bf Remark 3.} How can equation~(\ref{elementarycurvature}) be related to the Euler curvature form $P(\kappa(p))$
used in the classical Gauss-Bonnet-Chern? Here is as close as we could get: 
with a ``super operator" $D$ satisfying $V_{k-2}={\rm tr}(D^k)$ for $k \geq 1$ we have
$K(p) = {\rm tr}(\log(D+1)) = \log({\rm det}(D+1))$. If $D+1$ were skew symmetric then this is
$2 \log(P(D+1))$ with Pfaffian $P$ and would also establish $K(p)=0$ in odd dimensions. 
Discrete curvature $K(p)$ when summed up is formally close to a "height" construction 
$\log({\rm det}(L))$ in differential geometry, where $L$ is the Laplacian with a zeta regularized determinant.
Rewriting Gauss-Bonnet-Chern as $\chi(G) = (1/t) {\rm tr}(\log(D+1)^t)$ is a naive discrete analogue of
the McKean-Singer formula $\chi(M)={\rm str}(e^{tL})$ because $e^{tL} x$ solving $x'=Lx$ is replaced by 
$(D+1)^t x$ solving the discrete heat flow $x(t+1)-x(t) = D x(t)$. While this is only formal,
it adds hope that $K(p)$ is zero in odd dimensions and differential geometric assumptions done here. \\

For the following low-dimensional cases, we use the notation $V_0=V,V_1=E,V_2=F,V_3=C,V_4=S,V_5=H$:  \\

\begin{center}
\begin{tabular}{l|l|l}
dim&   Euler form  $K$           & simplified                    \\ \hline
d=2&  1-E/6                      & 1-E/6                         \\
d=3&   -E/6+(2/3) E/4            & 0                             \\
d=4&  1-E/6+F/4-(2/4) 3F/10      & 1 -E/6 + F/10                 \\
d=5&   -E/6+F/4-3C/10 +(2/5)C/3  &   -E/6 + F/4 - C/6            \\
d=6&  1-E/6+F/4-3C/10+S/3-(2/6)5S/14        & 1-E/6+F/4-3C/10+3S/14   \\
d=7&  -E/6+F/4-3C/10+S/3-(5H/14)+(2/7)3H/8&  -E/6+F/4-3C/10+S/3-H/4 \\
\end{tabular}
\end{center}

Our main result is that
integrating the Euler curvature form over the graph is equal to the Euler characteristic:

\begin{thm}[Discrete Gauss-Bonnet-Chern]
\label{gbc}
If $G=(V,E)$ is a finite $d$-dimensional graph, then 
$$  \sum_{p \in V} K(p) = \chi(G) \; . $$ 
\end{thm}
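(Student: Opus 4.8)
The plan is to reduce the geometric Euler curvature form to the ``elementary'' curvature $K_{\mathrm{elem}}(p) = 1 + \sum_{k\ge 1}(-1)^k V_{k-1}(p)/(k+1)$ of Equation~(\ref{elementarycurvature}), for which the Gauss-Bonnet identity was already observed in the introduction. Concretely, I would prove the \emph{pointwise} identity $K(p) = K_{\mathrm{elem}}(p)$ at every vertex $p$, where $K$ is the Euler curvature form of the Definition. Since summing $K_{\mathrm{elem}}$ gives $\chi(G)$, this immediately yields $\sum_{p} K(p) = \chi(G)$. The two structural hypotheses built into the notion of a $d$-dimensional graph---that each unit sphere $S(p)$ is a $(d-1)$-dimensional graph without boundary of Euler characteristic $1+(-1)^{d-1}$, and that the hyper relation holds inductively on spheres---are precisely what convert $K_{\mathrm{elem}}$ into $K$.

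First I would record the elementary Gauss-Bonnet step. A $k$-dimensional simplex of $G$ is a complete subgraph on $k+1$ vertices; deleting any one of its vertices $p$ leaves a $(k-1)$-simplex lying inside $S(p)$, and conversely every $(k-1)$-simplex of $S(p)$ together with $p$ is a $k$-simplex through $p$. Counting incident pairs (simplex, chosen vertex) two ways gives the double-counting identity $\sum_{p} V_{k-1}(p) = (k+1)\,v_k$. Summing $K_{\mathrm{elem}}$ over $V$, using $\sum_p 1 = v_0$, the factor $(k+1)$ in each denominator then cancels against $(k+1)v_k$, leaving $\sum_{k\ge 0}(-1)^k v_k = \chi(G)$.

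Next I would pass from $K_{\mathrm{elem}}(p)$ to the symmetric form $e_d + \sum_{i=1}^{d-1} a_i V_i$ of Remark 2. Since $S(p)$ is $(d-1)$-dimensional, $V_j(p)=0$ for $j\ge d$, so $K_{\mathrm{elem}}(p) = 1 - V_0/2 + \sum_{k=2}^{d}(-1)^k V_{k-1}/(k+1)$. The only ``foreign'' ingredients are the constant $1$ and the $V_0$ term, and I would eliminate $V_0$ using the Euler characteristic hypothesis $V_0 - V_1 + \cdots + (-1)^{d-1}V_{d-1} = 1+(-1)^{d-1}$. Substituting the resulting expression for $V_0$, the constant collapses to $\tfrac12\bigl(1-(-1)^{d-1}\bigr) = \tfrac12\bigl(1+(-1)^{d}\bigr) = e_d$, and the coefficient of each $V_j$ becomes $\tfrac12(-1)^j + (-1)^{j+1}/(j+2) = (-1)^j\bigl(\tfrac12 - \tfrac1{j+2}\bigr) = a_j$, exactly as required.

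Finally, for $d>2$ I would collapse the top two terms of the symmetric form using the hyper relation applied to the $(d-1)$-dimensional sphere $S(p)$, namely $d\,V_{d-1}(p) = 2\,V_{d-2}(p)$. This turns $a_{d-2}V_{d-2} + a_{d-1}V_{d-1}$ into $V_{d-2}\bigl(a_{d-2} + 2a_{d-1}/d\bigr)$, which is precisely the last term in the Definition; for $d=2$ there is nothing to merge and the symmetric form already equals $K(p)$. I expect the main obstacle to be bookkeeping rather than ideas: one must keep the indices straight, verify the arithmetic that the constant is $e_d$ and that the merged coefficient is $a_{d-2}+2a_{d-1}/d$, and check the low-dimensional edge cases ($d=2,3$) separately. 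The genuinely essential input is conceptual: the geometric hypotheses on unit spheres enter in a single, clean way, and it is worth isolating them as the reason the ``naive'' curvature and the ``geometric'' curvature agree pointwise.
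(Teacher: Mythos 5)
Your proposal is correct, and it uses exactly the same three ingredients as the paper: the transfer (double-counting) equations $\sum_p V_{k-1}(p)=(k+1)v_k$, the Euler characteristic hypothesis on each unit sphere, and the hyper relation $d\,V_{d-1}=2\,V_{d-2}$ inherited by each sphere. The only difference is organizational, but it is a real one: the paper works entirely with sums over the vertex set (it first converts $\chi=\sum_k(-1)^kv_k$ into $\sum_p K_{\mathrm{elem}}(p)=\chi$ via the transfer equations, then adds the \emph{summed} sphere relations to massage the left-hand side into $\sum_p K(p)$), whereas you apply the sphere relations pointwise and obtain the stronger statement that $K(p)=K_{\mathrm{elem}}(p)$ at every single vertex, only then summing. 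Since both the sphere Euler characteristic and the sphere hyper relation are hypotheses holding at each vertex individually (the paper's Lemma on hyper relations is stated in sum form but its proof gives the pointwise version $dV_{d-1}=2V_{d-2}$), your pointwise substitution is fully justified; your arithmetic for the constant term $1-e_{d-1}=e_d$, the coefficients $a_j$, and the merged coefficient $a_{d-2}+2a_{d-1}/d$ all checks out, as does the needed fact that $V_j(p)=0$ for $j\ge d$. What your version buys is a cleaner conceptual statement — the geometric and elementary curvatures agree vertex by vertex, not merely in total — which the paper only gestures at in its remarks; what the paper's version buys is nothing extra here, so your route is, if anything, the preferable write-up.
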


{\bf Remark 1.} Theorem~\ref{gbc} reduces to Gauss-Bonnet for two dimensional graphs 
discussed in the second part of \cite{elemente11}. 
As stressed in that article, all notions which enter the theorem are purely graph theoretical and no additional 
structure is required. In the same way that objects in differential topology are of 
primary interest, if they are defined intrinsically and in a coordinate independent way, 
we do not want to refer to an ambient structure or assume
non graph theoretically defined notions. A large field of discrete differential geometry
does include more information like angles or distances to be able to approximate given smooth structures.  
Discrete differential geometry has grown in many flavors: Regge calculus \cite{Misner} is used 
in general relativity, digital topology in computer vision, simplicial cohomology for algebraic topology,
or computational geometry in numerical methods of computer graphics \cite{Devadoss}. 
Graph theoretical approaches not using any other structures appear in network theory \cite{CohenHavlin}. \\
{\bf Remark 2.} The inductive dimension for graphs \cite{elemente11}
is closely related to the inductive Brower-Menger-Urysohn dimension for topological spaces
which goes back to the later work of
Poincar\'e \cite{HurewiczWallman,Engelking,Pears}. 
This dimension is the smallest $n$ such that every point has arbitrary small neighborhoods
$V$ such that the boundary of $V$ has inductive dimension $(n-1)$ and the 
empty set has dimension $-1$ \cite{HurewiczWallman}.
For a normal topological space with countable base, the inductive dimension is equal to
the topological dimension by Urysohn's metrization theorem.
It is different from the inductive graph dimension defined in \cite{elemente11} because both
"topological inductive dimension" and "covering dimension" are zero for finite graphs where the discrete topology of
all subsets it the only natural topology. There are other notions of "dimension":
the "topological dimension" of a graph always $1$ (e.g. \cite{munkres} page 305),
the "metric dimension" is the minimum number of vertices of a subset $A \subset V$ of $G=(V,E)$
such that every $v \in V$ is uniquely defined by the distances to points in $A$. The
"fractal dimension" for infinite graphs measures the growth rate $|B_r| \sim r^d$ \cite{CohenHavlin}. 
Also these are obviously different dimensions. \\
{\bf Remark 3.} The curvature form $K(p)$ defined here is local in the sense that it only depends on the
geometry of the unit ball $S_1(p)$ centered at the vertex $p$. Theorem~\ref{gbc} is interesting because it links 
deformable local properties with rigid global topological properties.  \\
{\bf Remark 4.} The discrete Gauss-Bonnet-Chern result is not a discrete approximation of the classical 
Gauss-Bonnet-Chern theorem because curvature takes a prescribed discrete set of values and can not 
become arbitrarily fine. Triangularizations allow to approximate $d$-dimensional 
manifolds by d-dimensional graphs but already the case of the two dimensional sphere, 
illustrated by geodesic domes, golf or buckyballs, shows that curvature is located on 12 points only, 
independent how fine the grid is. This implies for large graphs that most of the graph is flat.
In spirit the continuum and discrete results are the same result: the curvature is a local property which depends on the metric 
the Euler characteristic is global and a topological quantity. They are related in a global manner.  \\
{\bf Remark 5.} The previous remark does not exclude less naive connections between the discrete and continuum. 
Here is one: we could relate the continuous Gauss-Bonnet-Chern theorem with the discrete version by taking random 
triangularizations for which the expectation value of the discrete curvature is the classical curvature. \\
{\bf Remark 6.} The discrete curvature form changes during "homotopy deformations" of the graph but
the Euler characteristic does not. The discrete version of Euler form might be more accessible to geometric
interpretations since (to cite\cite{leeriemannian}) {\it "The only problem with the Gauss-Bonnet-Chern 
result is that the relationship between the Pfaffian and sectional curvatures is obscure in higher 
dimensions, so no one seems to have any idea how to interpret the theorem geometrically!"}
As in classical differential geometry, the geometry of spheres near a point 
$p$ determines the Euler curvature form. In classical differential geometry, the curvature tensor is
accessible through sectional curvatures which can be measured using Bertrand-Puiseux formulas from lengths
of circles on two dimensional geodesic sheets. 
There are Puisaux curvature formulas which do not refer to an underlying flat 
situation like $2 |S_1(p)|-|S_2(p)|$, where $|S_r(p)|$ is the length of the sphere
of radius $r$ in the sheet $\exp_p(\Sigma)$ where $\Sigma$ is a two dimensional plane in the tangent
space $T_pM$.  The classical curvature form $\kappa(v,w)$ of a manifold is 
determined from the lengths of circles of radius $2r$ and $r$ in a circle 
obtained by intersecting the sphere with the plane spanned by $v$ and $w$.  \\
{\bf Remark 7.} Even so the following quote of \cite{Chung97} appeared in the context of 
spectral graph theory, it hits the heart of matter:
{\it "Although differential geometry and spectral graph theory share a great deal 
in common, there is no question that significant differences exist. Obviously a graph is not "differentiable"
and many geometrical techniques involving high-order
derivatives could be very difficult, if not impossible to utilize for graphs. There are substantial 
obstacles for deriving the discrete analogues of many of the known results in the continuous case. 
Nevertheless, there are many successful examples developing the discrete parallels, and 
this process sometimes leads to improvement and strengthening of the original results from the continuous case."}
If there is a discrete Gauss-Bonnet-Chern theorem with second order notions, then this would be closer to the 
continuum, but this is more subtle and applies only to a much more restricted set of graphs. 
Even for the two dimensional Gauss-Bonnet theorem, there are challenges to identify the set 
of graphs in which second order curvatures like $2|S_1|-|S_2|$ work.  \\
{\bf Remark 8.} The study of dimension of graphs is also interesting in random graph theory. 
We have explicit formulas for the average dimension of a graph with $n$ vertices where each edge is
switched on with probability $p$. But the statistics of dimension is largely unexplored. We see experimentally
for example that both in random and concrete networks, points with integer dimension have higher probability. \\
{\bf Remark 9.} Since \cite{elemente11} was written, we also found out more about the origin of combinatorial curvature:
Higuchi \cite{Higuchi} uses a definition of an unpublished talk by Ishida from 1990, the
combinatorial curvature $H(p) = 1-|S_1(p)|/2 + \sum_{q \in S_1(p)}  1/|S_1(q)|$
which is dual to a curvature defined by Gromov \cite{Gromov87} in 1987. Higuchi 
poses a question whether the positivity of $H(p)$ 
for a planar graph $G$ in which $S_1$ is a closed circuit everywhere implies that the graph is finite.
He also mentions $\sum_p H(p) = 2$.  This is $\sum_{p \in V} (1-|S_1(p)|/6)=2$ in our notation,
for two-dimensional graphs, positive curvature is rather restrictive: the degree
of the graph has to be $4$ or $5$, the octahedron and icosahedron are the extreme cases with constant 
curvature $2$ and $1$ respectively. In the example section we will give
a list of all 2 dimensional graphs with positive curvature.\\
{\bf Remark 10.} Some results about $d$-dimensional graphs can be derived from the continuum because a $d$-dimensional 
graph can be embedded into a $d$-dimensional manifold producing a triangularization of the manifold. Still, 
it would be nice to have direct graph theoretical proofs of results like: any connected 2-dimensional graph of 
Euler characteristic $2$ is planar. The later is graph theoretical notion because by Kuratowski it is equivalent
to have no subgraph $K_5$ or $K_{3,3}$. While $K_5$ is 4 dimensional and excluded, the utility graph
$K_{3,3}$ is one dimensional and not ruled out by dimension alone. \\
{\bf Remark 11.} In the definition for a d-dimensional graph we include all the properties we need.
They could be replaced by topological requirements.
We could ask that for $d=1$ we have $|S_1(p)|=2$, for $d=2$ that the unit sphere is a connected
boundary-less graph and that for $d \geq 3$ the unit sphere is a simply connected graph.
This is equivalent if the unit sphere $S_1(p)$ of a graph can be embedded into a 
$d$-dimensional manifold with the same topological properties. Since this topological assumptions
need intrinsic justifications and the just mentioned terms depend on deep results in
differential geometry, we chose the stronger but simpler assumptions.

\section{Proof}

By assumption, the unit sphere $S_1(p)$ at every point is a $(d-1)$-dimensional graph without boundary. The
Euler characteristic assumption $\chi(S_1(p)) = 2 e_{d-1}$ assures for example that
the unit sphere is not a union of different spheres as in Hessel type examples. By definition,
$$ v_0-v_1+v_2-v_3+v_4-v_5 +\cdots +v_{d-1} = \chi    \; ,              $$ 
and by assumption, at every point $p$, the quantities $V_i=V_i(p)$ satisfy
$$ V_0-V_1+V_2-V_3+V_4-V_5 + \cdots  +V_{d-2} = \chi_1 = 2 e_{d-1}  \; .   $$ 
For two-dimensional graphs, for example, $v-e+f=\chi$ and $V-E=0$. The following 
equations are well known in graph theory.

\begin{lemma}[ Transfer equations]
$$ \sum_p V_{k-1}(p) = (k+1) v_k  \;  $$
for $k=1,\dots ,d$. 
\label{transferrelations}
\end{lemma}

\begin{proof} 
The equation $\sum_p V_0(p)   = 2 v_1$ means $\sum_p V(p) = 2 e$. It
holds because to every edge belong two vertices and $2e$ is the sum of all degrees of the graph
and because$V(p)$ is the degree of $p$. The second equation $\sum_p V_1(p)   = 3 v_2$
means $\sum_p E(p) = 3 f$, the sum of all face degrees of a point $p$ is three times the number
of faces and since faces touching $p$ are in one to one correspondence to edges of 
$S_1(p)$. In general, define the $k$-degree $d_k(p)$ of $p$ as the number of $k$-dimensional 
simplices which contain the point $p$. The usual degree is $d_0(p)$.
We have $d_k(p) = V_{k-1}(p)$ because $k$-dimensional simplices containing $p$ are
in one to one correspondence to $(k-1)$-dimensional faces of $S_1(p)$. 
There are $v_{k}$ simplices and $(k+1) v_{k}$ is the sum of $k$ degrees of the point
because each simplex adds $k+1$ vertices. Therefore, $\sum_p V_{k-1}(p) = (k+1) v_k$.
\end{proof} 

Additionally to the transfer equations, we have assumed a relation
between the number of $d$-dimensional pieces with $d-1$-dimensional pieces in the graph.
These hyper relations $(d+1) v_d  = 2 v_{d-1}$ are
inductively required to hold also for every unit sphere $S_1(p)$.
They reflect the fact that the graph $G$ has no boundary. 
In $2$ dimensions for example, the hyper relation is $3 f = 2 e$. 
Two $2$-dimensional adjacent faces intersect in an edge. Similarly, two $3D$-adjacent chambers 
intersect in a face showing the relation in $3D$. In general, we know that two 
$n$-dimensional adjacent pieces intersect in a hyper surface. We use these relations in a sum
form:

\begin{lemma}[Hyper relations]
$$ \sum_p V_{d-1}  = \frac{2}{d} \sum_p V_{d-2}   \; . $$
\label{hyperrelations}
\end{lemma}
\begin{proof}
Because a sphere is a $(d-1)$-dimensional graph without boundary too, the hyper relations also 
apply to the unit spheres which means
$$  d V_{d-1}  = 2 V_{d-2}  \; .  $$        
This iimples for example $2 E = 2 V$ for three dimensional graphs, where the unit spheres are two dimensional graphs
which are polyhedra. Summing up the second gives the sum form.
\end{proof}

{\bf Remark 1.} For $d=2$, the hyper relations follow from the dimensionality assumption. 
To violate the hyperrelations would mean that two adjacent spaces intersect in more than one face.  \\
{\bf Remark 2.} In all dimensions $d$, Lemma~(\ref{hyperrelations}) could be proven from some topological 
assumptions on the unit spheres as in the case $d=2$. The relations mean that two adjacent 
$d$-dimensional simplices always intersect in a single $(d-1)$-dimensional face. 
Instead of trying to prove the relations from more topological assumptions on the unit sphere 
like from a discrete version of simple connectivity, we assume in this paper the hyper relation.  \\
{\bf Remark 3.} Because of the hyper relations, there are only $d-2$ terms in the Euler form. 
In two dimensions, the transfer and hyper relation on the sphere are the same 
so that we still have one term in the curvature $K(p)$: the number of edges in the sphere $S(p)$. \\

All sums $\sum$ which appear in the following proof are understood over the vertex set $V$ and abbreviate $\sum_{p \in V}$.
We write $\chi=\chi(G)$ for the Euler characteristic of $G$ and $\chi_1$ for the Euler characteristic of the unit sphere 
$S_1(p)$. By assumption, we know that $\chi_1=2 e_{d-1}$ is a constant taking either the values $0$ or $2$.

Here is the proof of the theorem: 

\begin{proof}
By definition, the Euler characteristic of the graph satisfies
$$ v_0 - v_1    + v_2    - v_3    + \cdots   + (-1)^d  v_d  = \chi \; .  $$
Using Lemma~(\ref{transferrelations}), we get from this
$$ v_0 - \sum \frac{V_0}{2} + \sum \frac{V_1}{3} + \sum \frac{V_2}{4} + \cdots \pm \sum \frac{V_{d-2}}{d}  \mp \sum \frac{V_{d-1}}{(d+1)} = \chi  \; . $$
Because $\sum 1 = v_0$, this is
\begin{equation}
\sum 1 - \sum \frac{V_0}{2} + \sum \frac{V_1}{3} + \sum \frac{V_2}{4} + \cdots \pm \sum \frac{V_{d-2}}{d}  \mp \sum \frac{V_{d-1}}{(d+1)} = \chi  \; .
\label{proof0}
\end{equation}

Together with Lemma~(\ref{hyperrelations}) , we get 
\begin{equation}
\label{proof1}
 \sum 1 - \sum \frac{V_0}{2} + \sum \frac{V_1}{3} - \sum \frac{V_2}{4} + \cdots \pm \sum V_{d-2} (\frac{1}{d} - \frac{2}{d(d+1)})  = \chi \; .   
\end{equation}
The assumption 
$$   V_0-V_1+V_2- \dots \pm V_{d-1}=\chi_1 $$ 
for the Euler characteristic $\chi_1$ of each sphere can be written as
$$  \sum \frac{V_0}{2} - \sum \frac{\chi_1}{2} - \sum \frac{V_1}{2}+\sum \frac{V_2}{2}-\sum \frac{V_3}{2} \dots \mp \sum \frac{V_{d-1}}{2}  \;  $$
which is
\begin{equation}
    \sum \frac{V_0}{2} - \sum \frac{\chi_1}{2} - \sum \frac{V_1}{2}+\sum \frac{V_2}{2}-\sum \frac{V_3}{2} \dots \mp \sum (\frac{1}{2} - \frac{2}{d}) V_{d-2}  \; . 
\label{proof2}
\end{equation}
Adding equations~(\ref{proof1}) and ~(\ref{proof2})  gives
$$ \sum (1 - \frac{\chi_1}{2}) + \sum V_1 (\frac{1}{3}-\frac{1}{2}) - \sum V_2 (\frac{1}{4}-\frac{1}{2}) 
        + \dots  \pm  \sum V_{d-2} (  \frac{1}{d} - \frac{2}{d(d+1)} - \frac{1}{2} + \frac{1}{d} )  = \chi  \; .  $$
Now use the assumption $1 - \chi_1/2 = e_{d-1}$ and the identity $a_{d-2}+(2/d) a_{d-1}=-(-1)^d ((2/d) - 2/(d (d+1)) - 1/2)$
to see that the left hand side is the total curvature.
\end{proof}

{\bf Remark 1.} As mentioned in the introduction,
Equation~(\ref{proof0}) is already a Gauss-Bonnet type formula relating the global 
Euler characteristic with a local property. It works for all
graphs without assumptions on Euler characteristic of the unit sphere nor hyper relations. It only uses
the transfer equations and is very general. Too general for geometry. \\
{\bf Remark 2.} As in the continuum, there is a generalization of Gauss-Bonnet-Chern
to $d$-dimensional graphs $G$ with boundary $\delta G$ by defining the curvature at the boundary.
One can obtain the boundary case by gluing two copies of $G$ together at $\delta G$. This gives a
$d$-dimensional graph without boundary. \\
{\bf Remark 3.} We have defined in \cite{elemente11} a polyhedron as a graph which can be made a
$2$-dimensional boundary less graph by snubbing some vertices and then triangularizing finitely many
one dimensional subgraphs. A $d$-polytope can be defined as a graph which can be made a $d$-dimensional
graph without boundary by replacing some vertices with $d$-dimensional simplices and then stellating
finitely many $(d-1)$-dimensional boundary subgraphs which are polytopes. \\
{\bf Remark 4.} There are curvatures on graphs which involve larger neighborhoods of a point in the graph like the second
order formula $K=2 |S_1|-|S_2|$. Already for two dimensional surfaces, the Gauss-Bonnet formula is then 
more subtle. Cylindrical graphs with $7$ fold symmetry are already counter examples. 
It is necessary to restrict the set of graphs, similarly as differential geometry 
restricts to manifolds with a differentiable structure. 

\section{Examples}

In the rest of the article we illustrate the theorem with examples 
and repeat the computation of the proof for smaller dimensions. 

\subsection{General graphs} 

\begin{figure}
\begin{tabular}{ll|ll}
\parbox{3cm}{This tree is a $1-$dimensional graph with boundary.                                  }& \scalebox{0.10}{\includegraphics{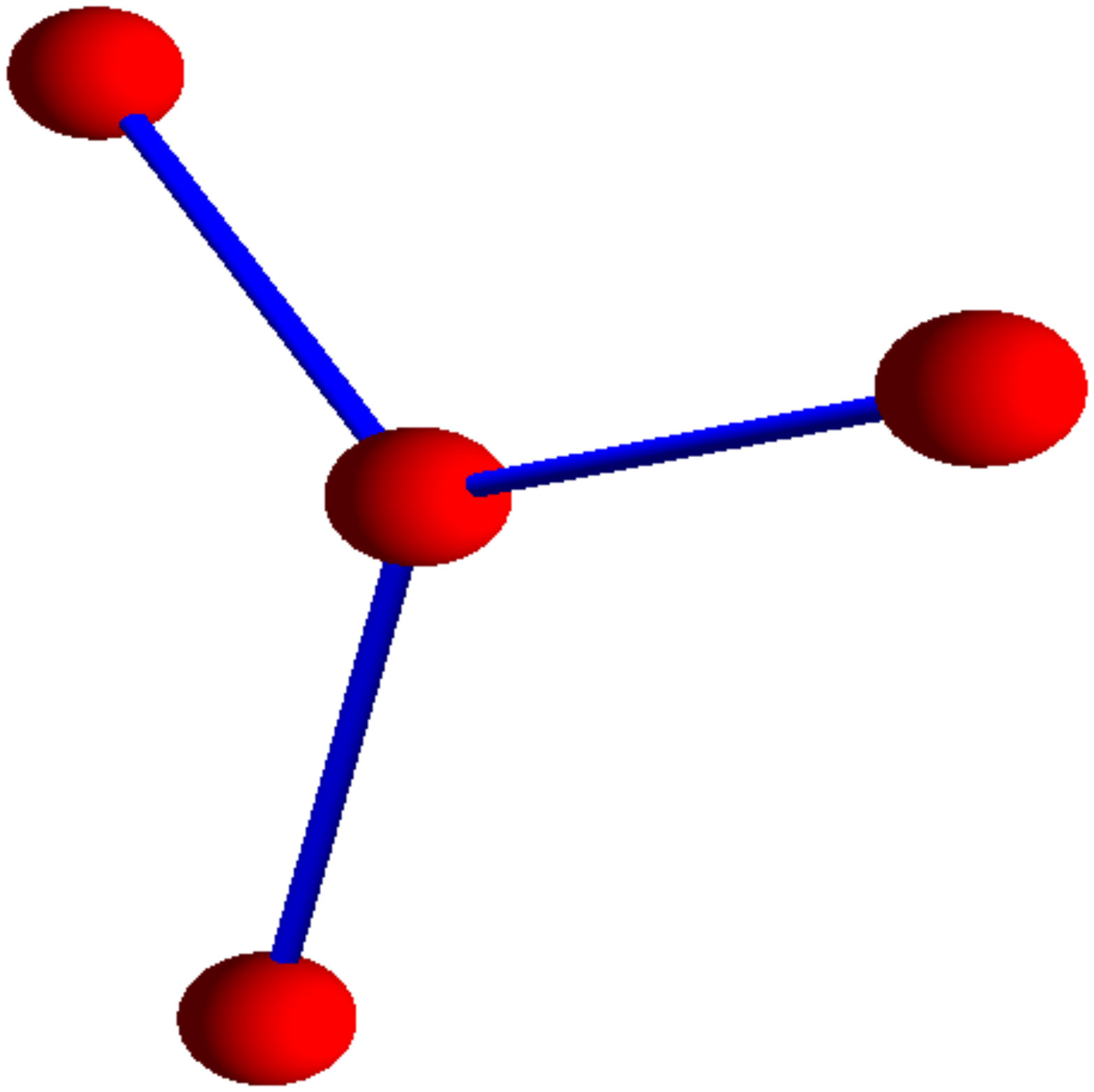}} &
\parbox{3cm}{$K_3$ is a $2$-dim graph with boundary. Each point is a boundary.            }& \scalebox{0.10}{\includegraphics{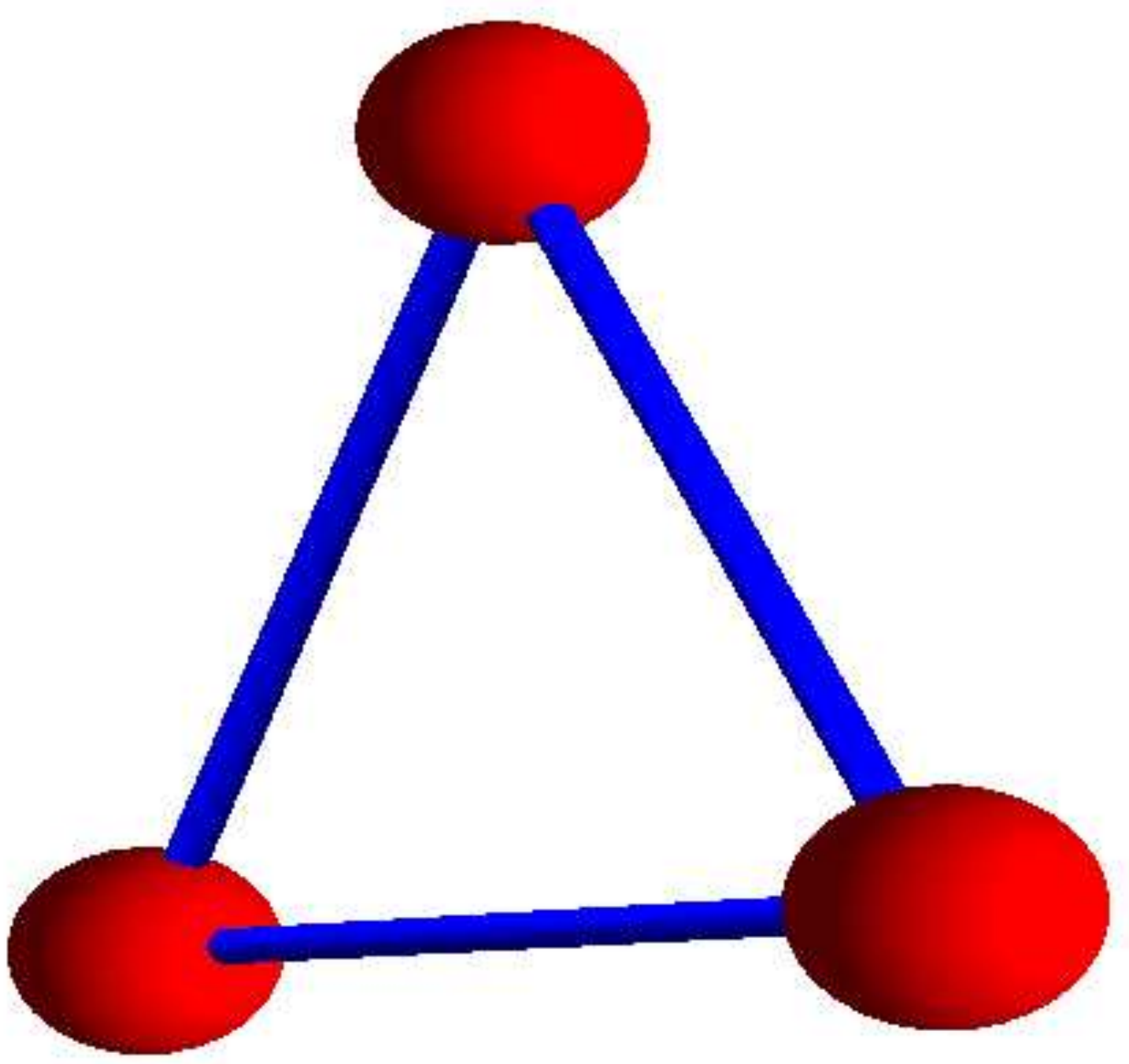}} \\
\parbox{3cm}{$W_6$ is a $2$-dimensional wheel graph with boundary.                                       }& \scalebox{0.10}{\includegraphics{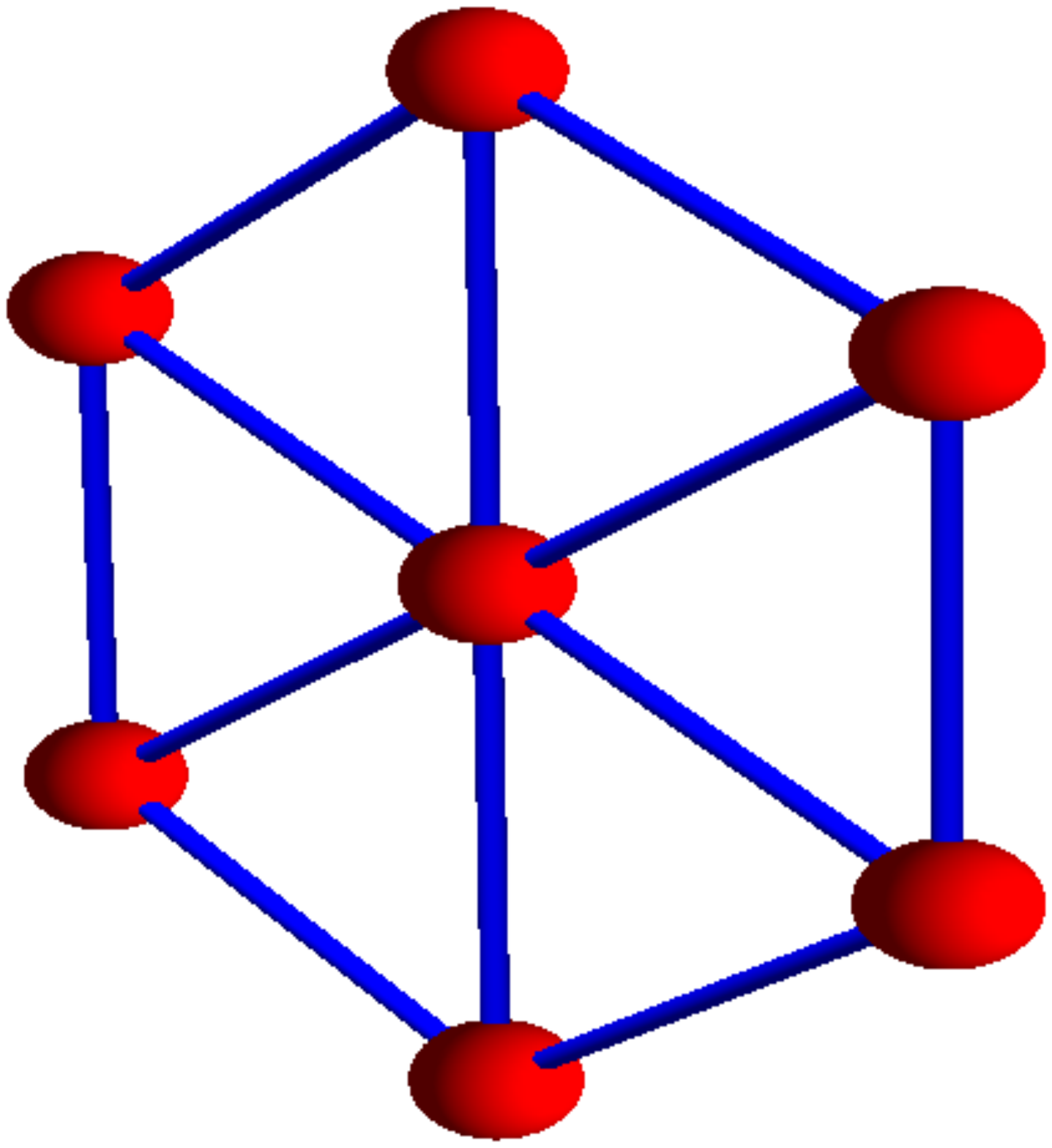}} &
\parbox{3cm}{$C_7$ is a connected $1$-dimensional graph without boundary.                                }& \scalebox{0.10}{\includegraphics{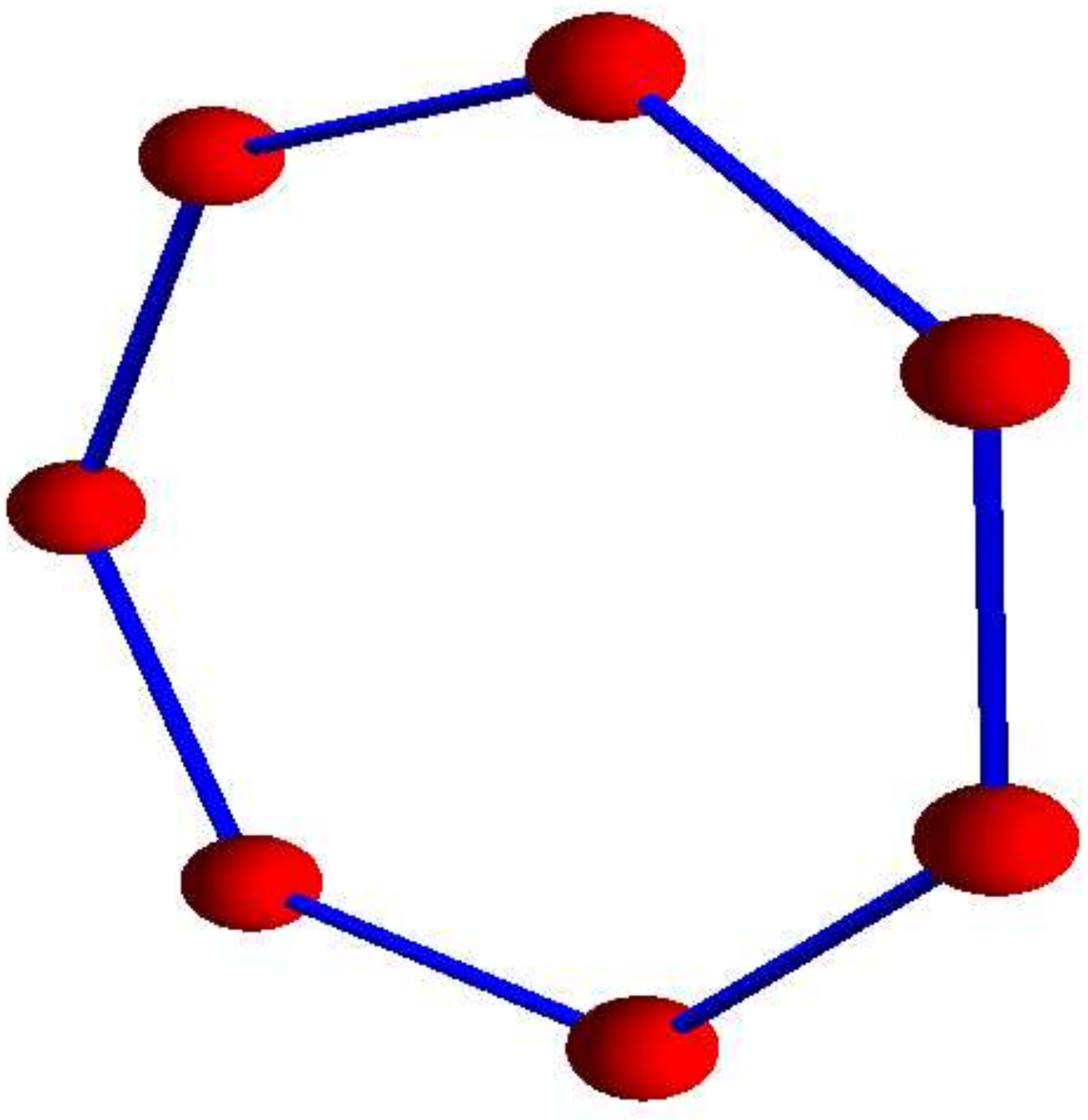}} \\
\parbox{3cm}{Hessel's example. $S(p)$ is a union of two cyclic graphs.                          }& \scalebox{0.10}{\includegraphics{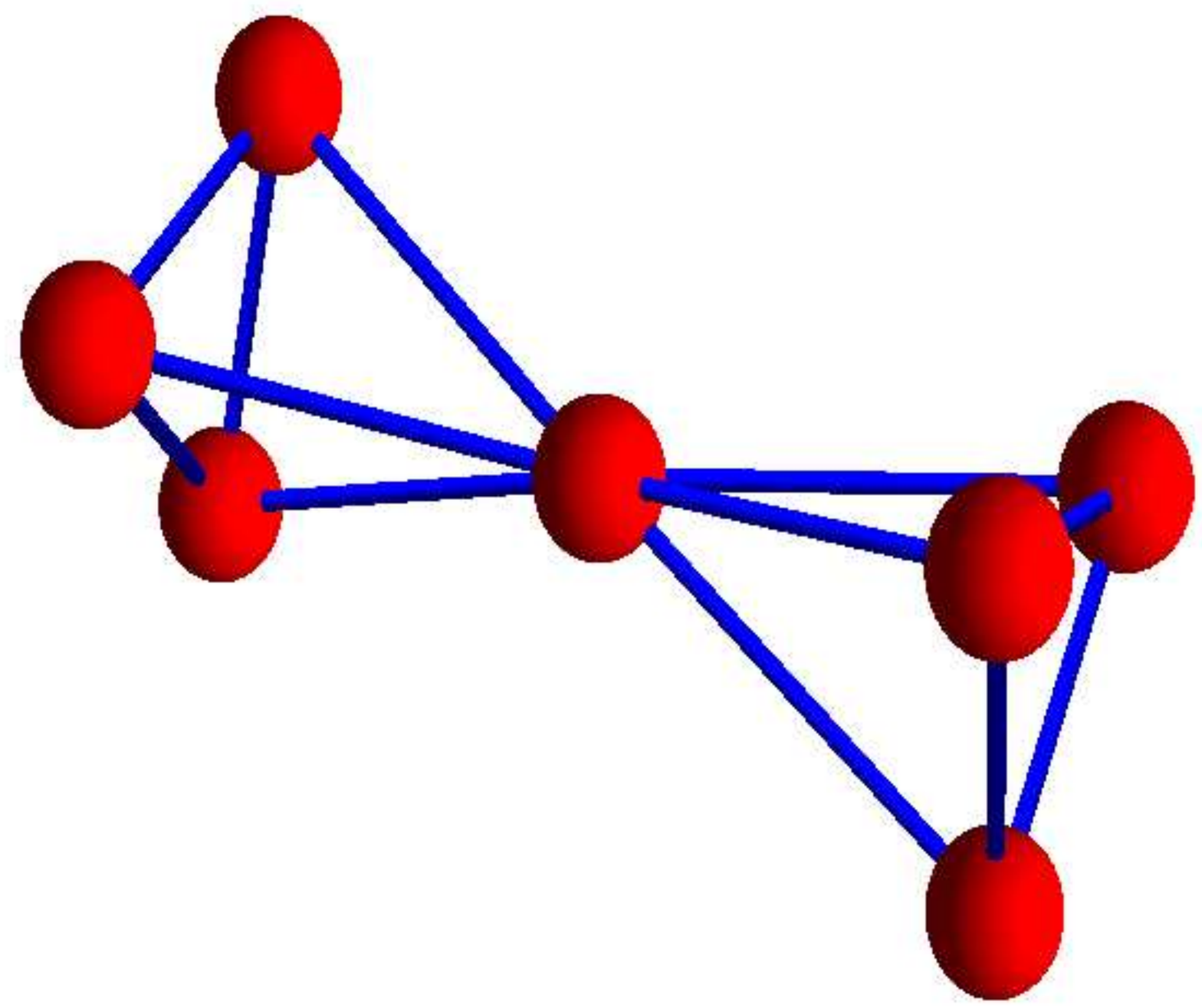}} &
\parbox{3cm}{$K_4$, the tetrahedron has dimension $3$ but has a boundary.                       }& \scalebox{0.10}{\includegraphics{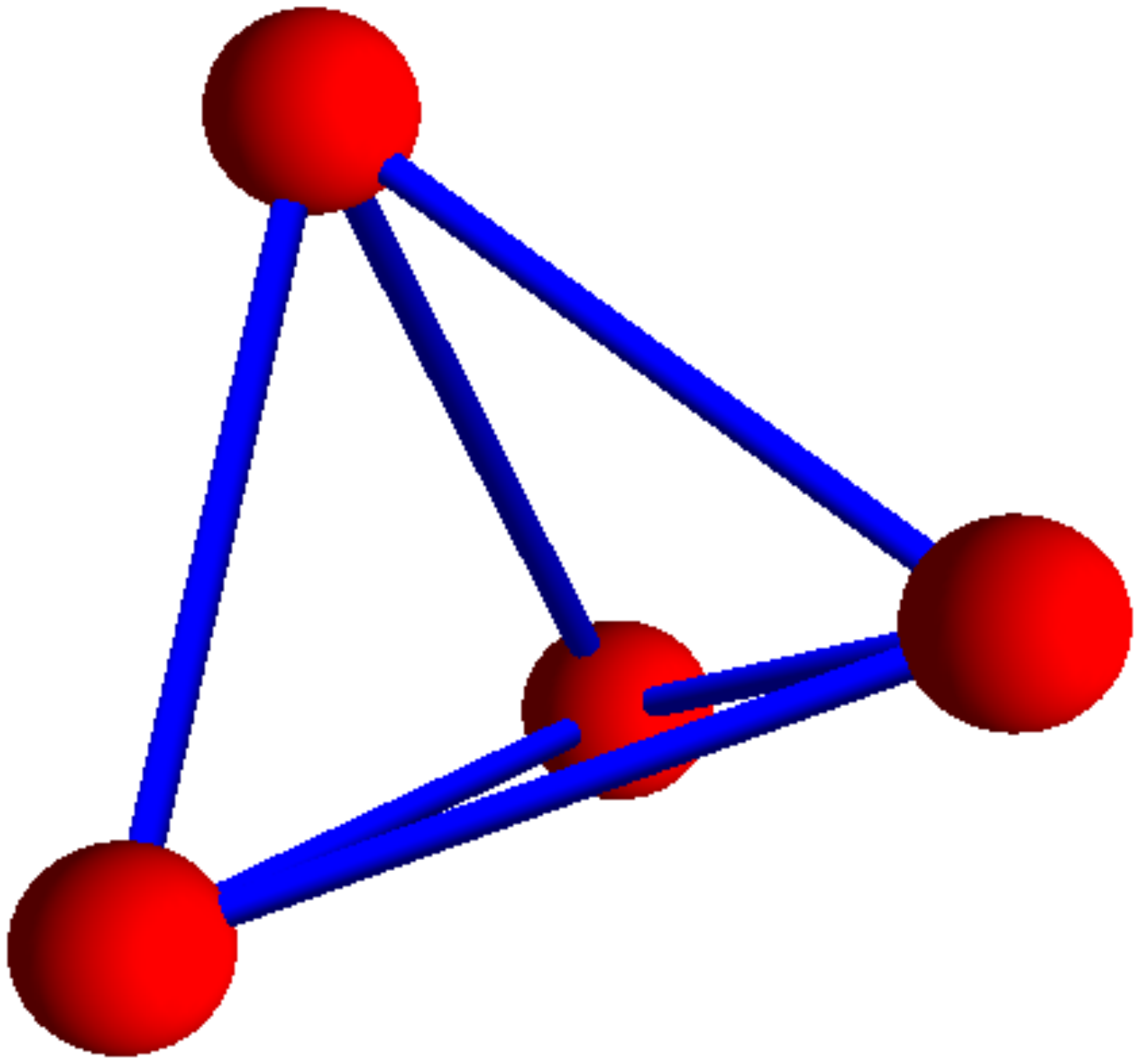}} \\
\parbox{3cm}{The utility graph $K(3,3)$ is one dimensional                                      }& \scalebox{0.10}{\includegraphics{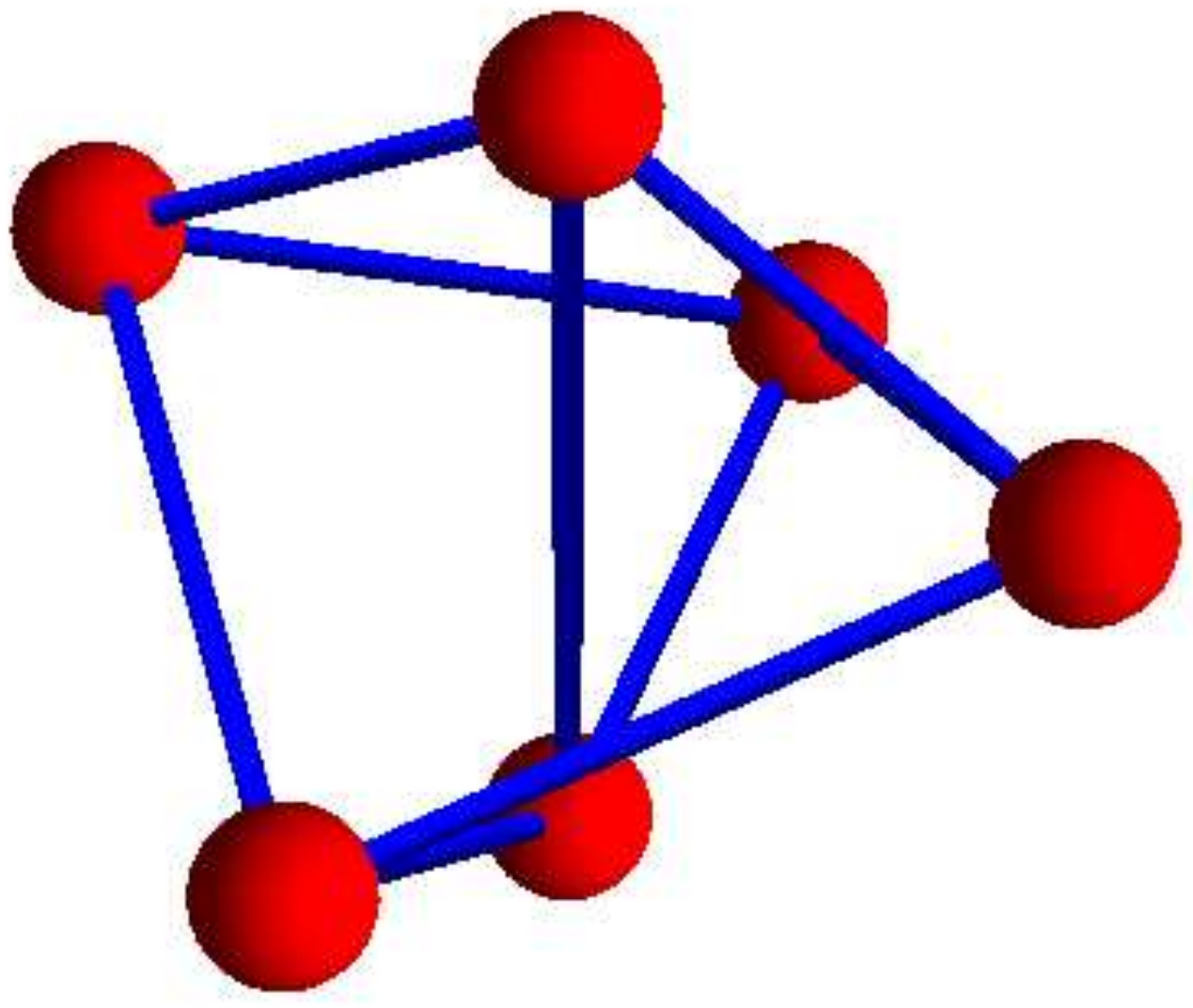}} &
\parbox{3cm}{A tri-pyramid construction of dimension 3 which contains $K(3,3)$                  }& \scalebox{0.10}{\includegraphics{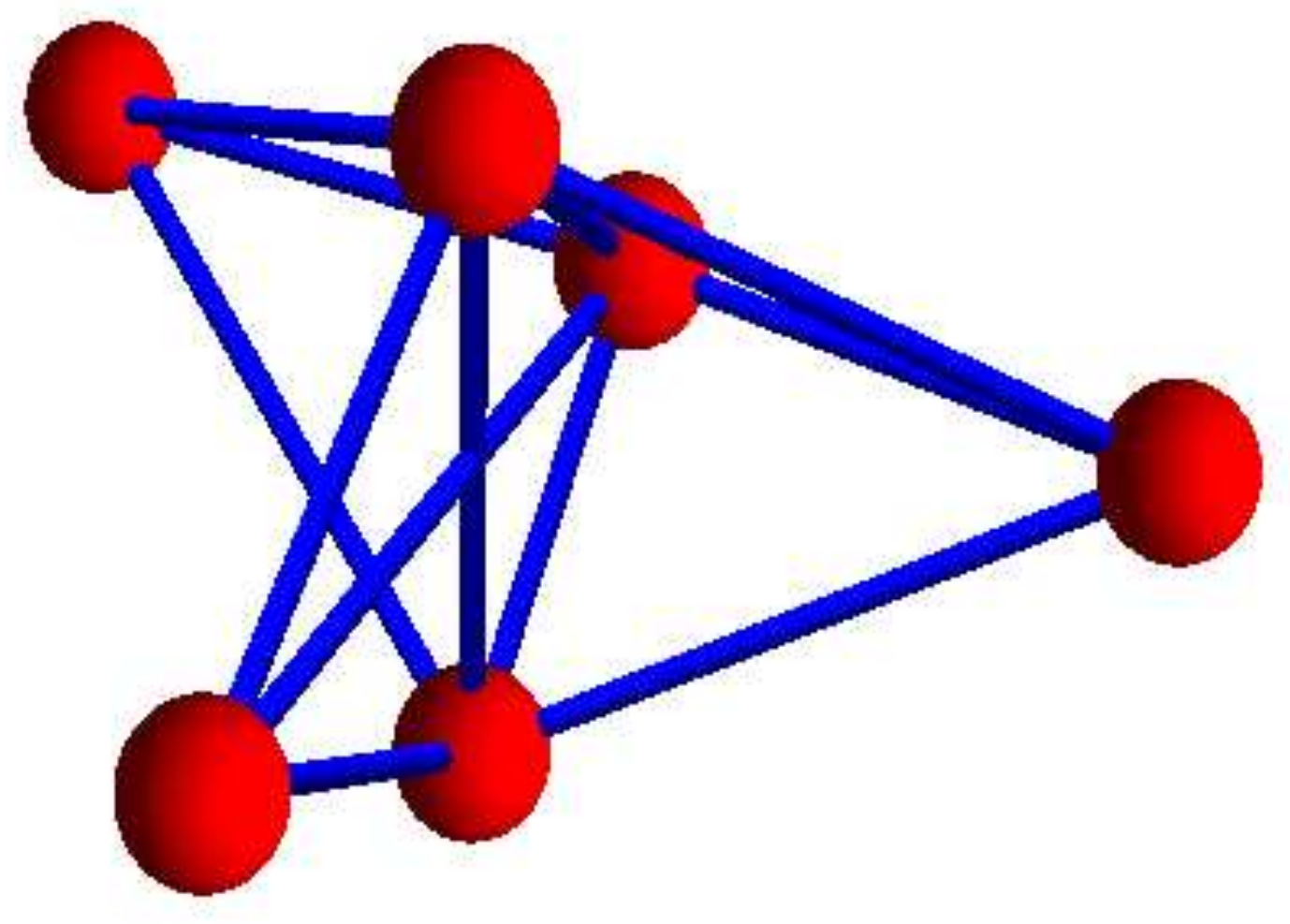}}  \\
\end{tabular}
\caption{Examples of graphs}
\end{figure}

\begin{figure}
\begin{tabular}{ll|ll}
\parbox{3cm}{A figure 8 type graph is 1-dimensional but not smooth.                            }& \scalebox{0.10}{\includegraphics{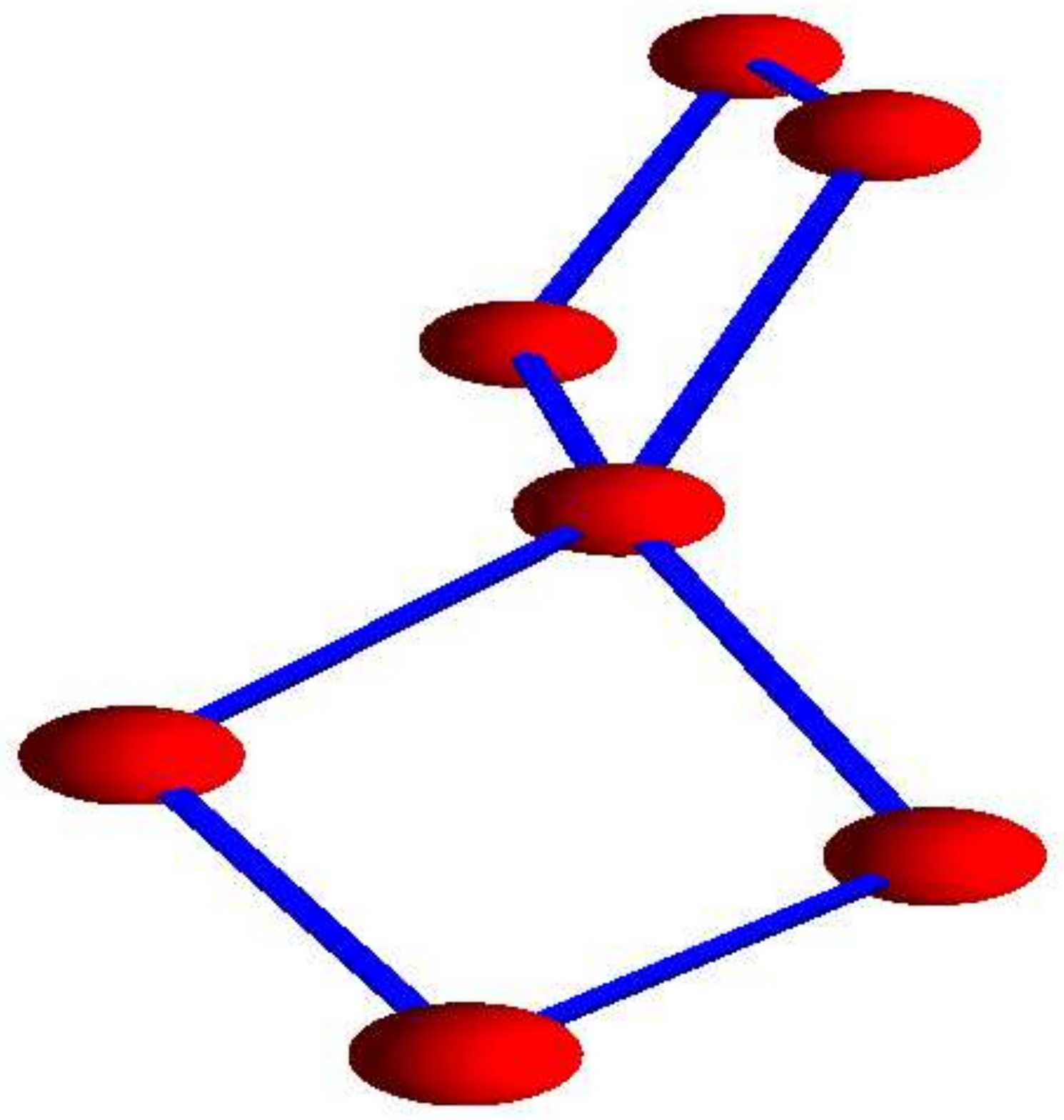}} &
\parbox{3cm}{A cube is a $1$-dim graph. It is a graph theoretical polyhedron.                  }& \scalebox{0.10}{\includegraphics{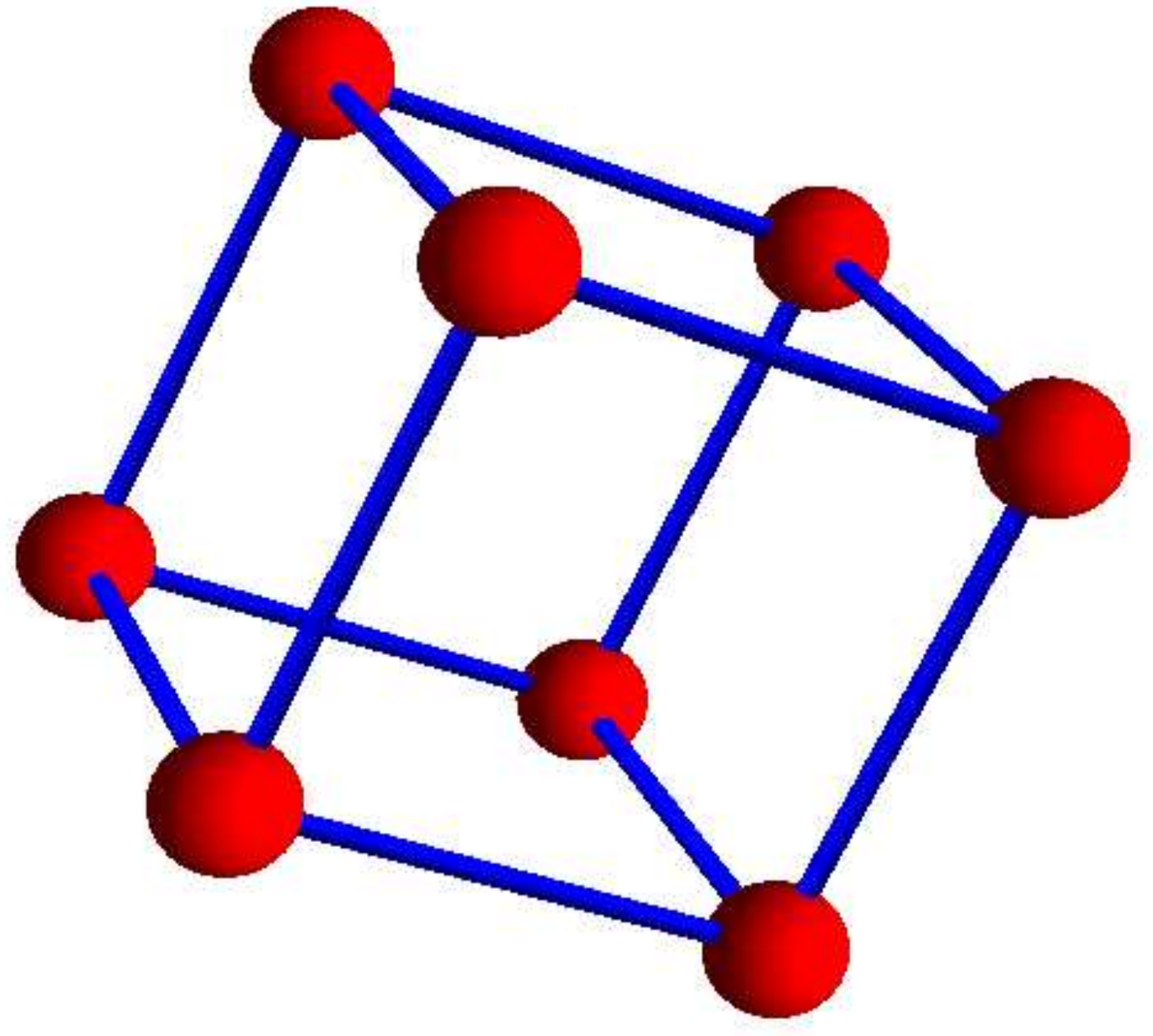}} \\
\parbox{3cm}{A stellated cube $d=2$ is a Catalan graph.                                        }& \scalebox{0.10}{\includegraphics{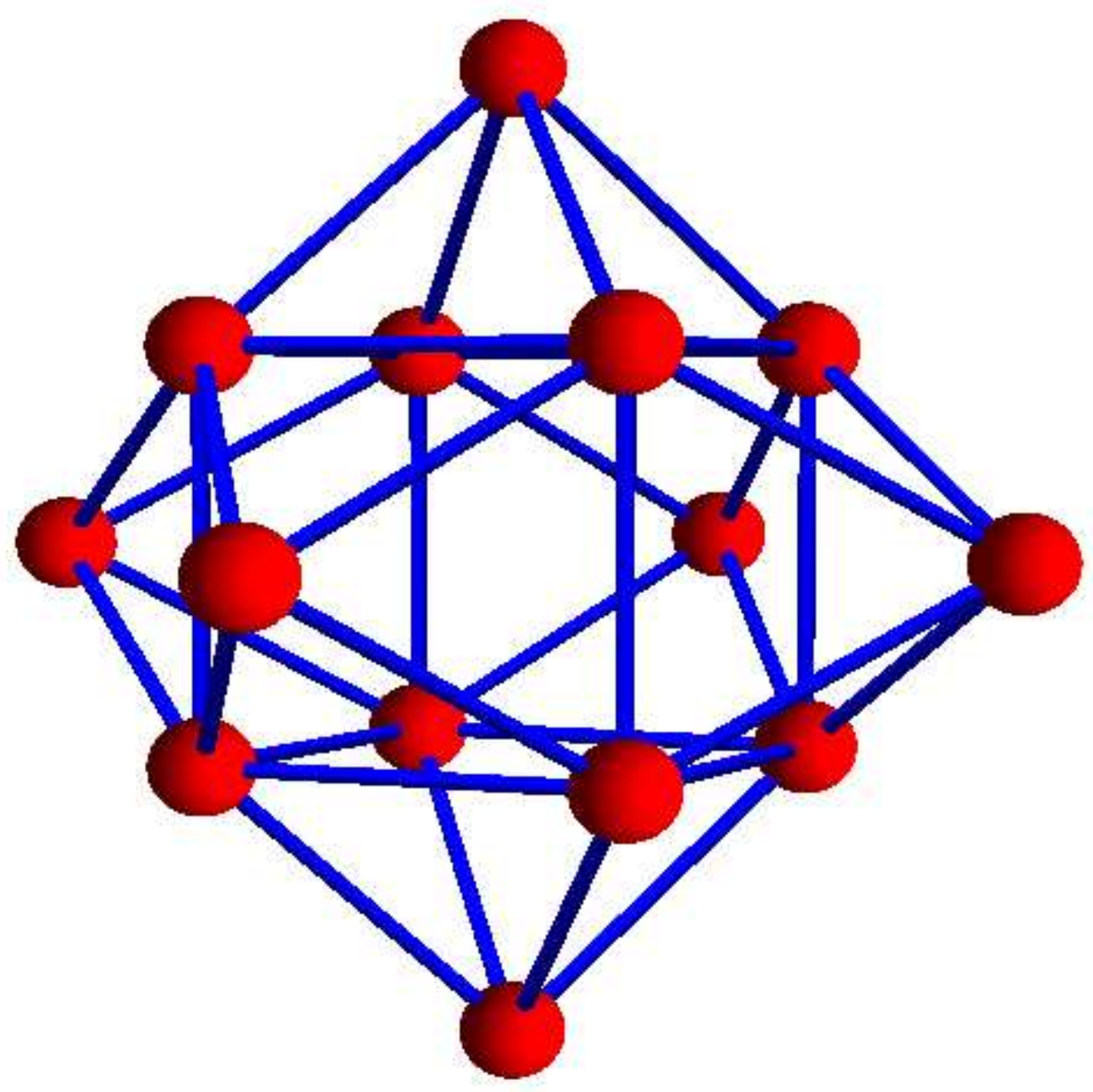}} &
\parbox{3cm}{The 3-cross-polytope = octahedron is the smallest 2-dim graph without boundary.   }& \scalebox{0.10}{\includegraphics{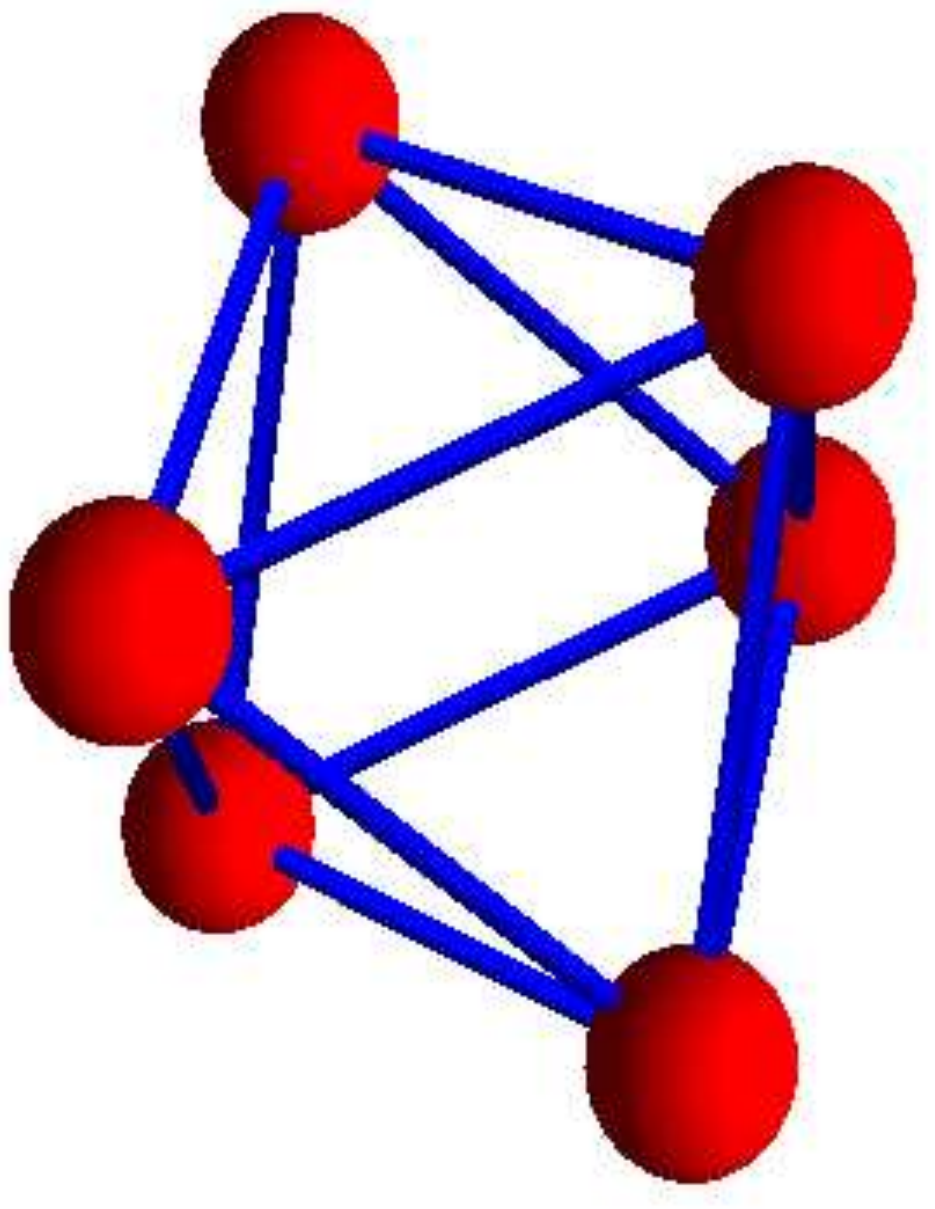}} \\
\parbox{3cm}{The 4-cross-polytope, the smallest 3 dimensional graph without boundary.          }& \scalebox{0.10}{\includegraphics{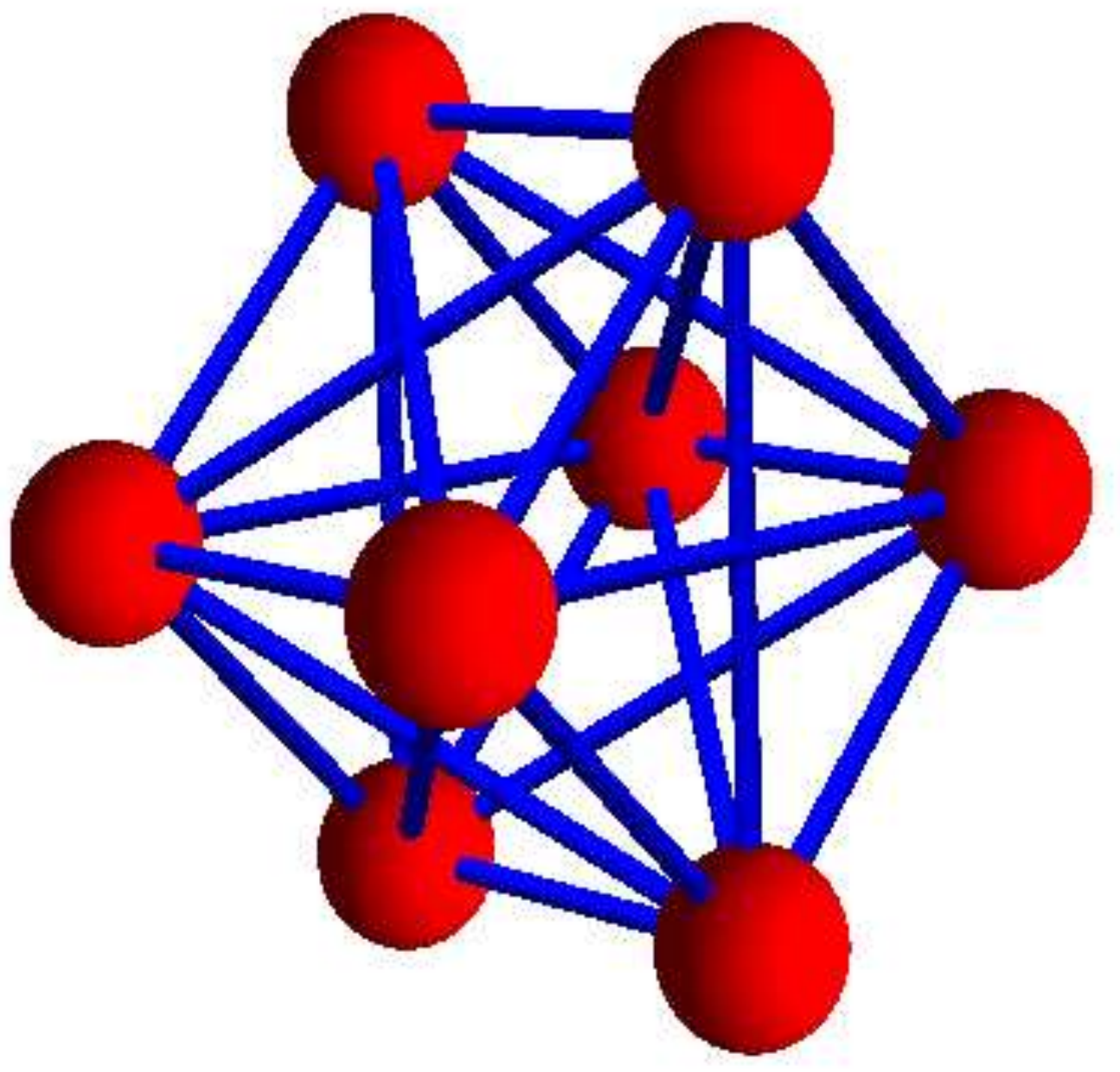}} &
\parbox{3cm}{The 5-cross-polytope is the smallest 4 dimensional graph without boundary.        }& \scalebox{0.10}{\includegraphics{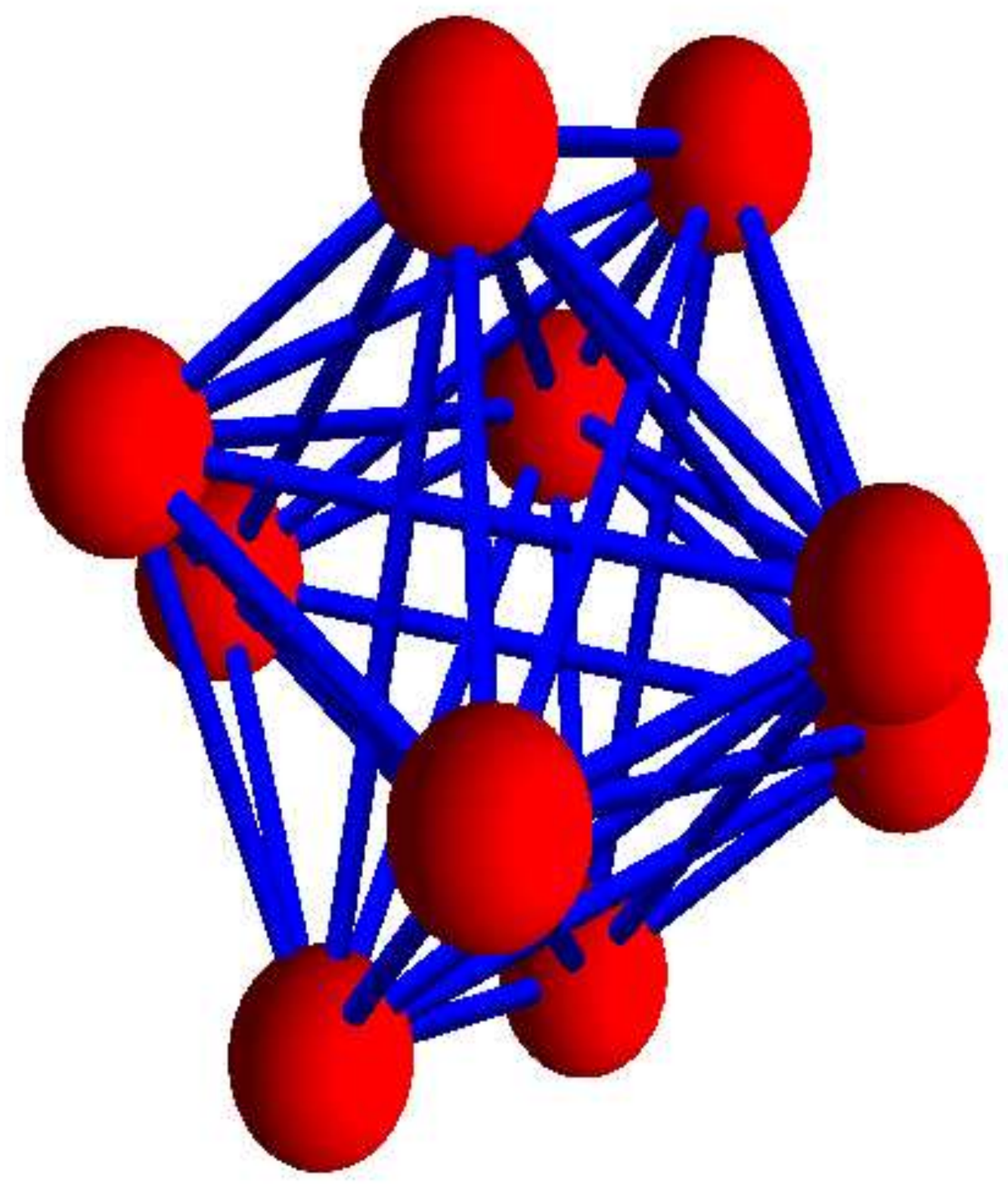}} 
\end{tabular} 
\caption{Examples of graphs. The last 4 examples are graphs we consider to be smooth. The unit 
spheres have the right Euler characteristic. }
\end{figure}

Many of the following examples are regular or semi-regular polyhedra in two and 
higher dimension \cite{Schlafli,coxeter}. Some of whom are Cayley graphs of finitely presented groups. 
More examples can be obtained from constructions. We can borrow from any
construction known to manifolds. Here are examples of constructions which can be used to generate
new graphs from other graphs or from manifolds:  \\

\begin{center}
\begin{tabular}{ll}
Pyramid    & add a vertex point and connect to all previous vertices \\
Bipyramid  & add two vertices to get a $d+1$ dimensional graph \\
Joining   & join d-dimensional graphs along $d-1$ dimensional subgraphs. \\
Identification   & identify along isomorphic $d-1$ dimensional subgraphs.  \\
Product             & produces a $d_1+d_2$ dimensional graph after  filling boundaries  \\
Triangularization      & produces d-dimensional graphs from $d$-dimensional manifolds \\
Conway            & kis, snub, truncate, gyro, propel etc to generate new graphs. \\
\end{tabular}
\end{center}

\vspace{5mm}

The classical pyramid, bipyramid and prism constructions discussed for example in \cite{gruenbaum} 
in the case of convex polytopes lead to higher dimensional regular polytopes. 
The pyramid construction does not produce $(d+1)$-dimensional 
graphs unless the graph $H$ is a simplex.  \\

If $H=(W,F)$ is a $d$-dimensional graph with Euler characteristic $\chi(H)$, 
then the bi-pyramid construction $G=(V \cup \{p,q \;\},F \cup \{ (w,p),(w,q) \; \}_{w \in W} )$ 
is a $(d+1)$-dimensional graph in the wider sense with Euler characteristic $\chi(G) = 2-\chi(H)$.
\begin{proof}
The original graph $H$ is the unit sphere of both $p$ and $q$. If $w \in W$ is a vertex in the old graph $H$,
then its unit sphere is $S_1(w) \cup \{p,q \; \}$, where $S_1(w)$ is the unit sphere of $w$ in the old graph.
The new unit sphere is $d$-dimensional. Since the unit spheres are either double pyramid constructions
of old unit spheres or coincide with $H$, the first claim follows by induction.
If $H$ is two-dimensional with $v'$ vertices, $e'$ edges and $f'$ faces, then $G$ has $s=2f'$ spaces
and $f=f'+2e'$ faces, $e=e'+2v'$ edges and $v=v'+2$ vertices. If the Euler characteristic of $H$
is $\chi'=v'-e'+f'$, then the Euler characteristic of the bipyramid extension $G$ is
$\chi = v-e+f-s=(v'+2)-(e'+2v') + (f'+2e')-2f' = 2-v'+e'-f' = 2-\chi'$.
\end{proof}

It follows that for every $d$-dimensional graph $G$ there exists a $d+1$-dimensional graph (in the wider sense without
any assumptions on the unit sphere) such that the unit sphere is $G$.
The topology of the unit sphere is needed if we want to use only $d-2$ sphere quantities.
For $d$-dimensional graphs without any assumptions on the unit spheres,
the unit spheres can be quite arbitrary. Classical differential geometry
motivates assumptions on the unit spheres for graphs. For radii smaller than
the injectivity radius of a Riemannian manifold $M$, the geodesic sphere $S_r(p)$ is a
topological sphere with Euler characteristic $2 e_{d-1}$ which 
is $0$ for even $d$ and $2$ for odd $d>1$. Its natural to assume this for graphs too.  \\

\begin{figure}
\scalebox{0.20}{\includegraphics{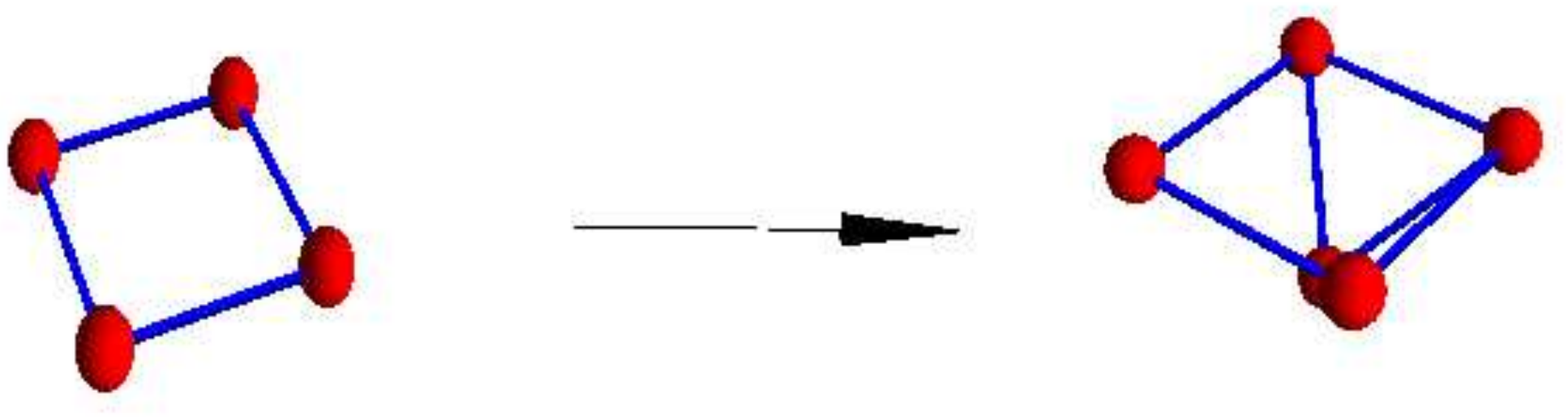}}
\scalebox{0.20}{\includegraphics{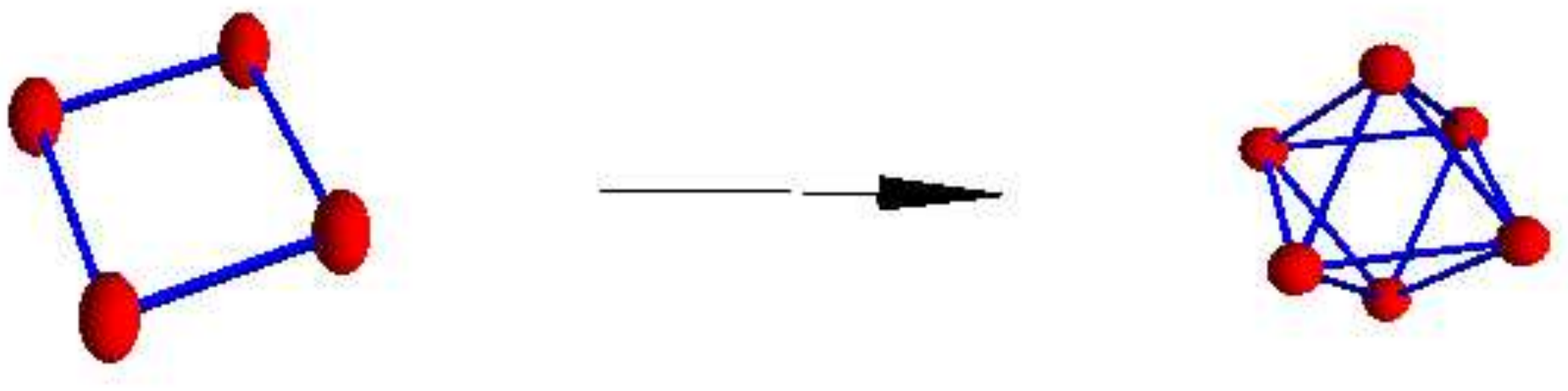}}
\caption{
The pyramid and bipyramid constructions obtained by a graph of dimension $d$ 
produce a bipartite graphs of dimension $d+1$. The can be generalized to multipyramid
constructions.  }
\label{4crosspolytope}
\end{figure}

\noindent
{\bf Remark 1.} The cross polytope in any dimension is obtained from the bipyramid construction 
starting with the zero dimensional point. We can take any two dimensional graph $H=(W,F)$,
add two points $\{ p,q \; \}$ and get a three dimensional graph
$G=(V \cup \{p,q \; \},F \cup \{ (w,p),(w,q) \; \}_{w \in W})$ etc. \\

{\bf Remark 2.} After repeating such a construction, we have a $(d+2)$-dimensional 
graph with the same Euler characteristic. It can be repeated. To get a $100$-dimensional
connected graph with Euler characteristic $-24$, start with a two dimensional graph with
this Euler characteristic, then do the bi-pyramid construction 98 times. \\

Given two $d$ dimensional graphs $G,H$ and assume that there are $(d-1)$ dimensional 
spheres $S_r(p) \subset G, S_r(q) \subset H$ with a graph isomorphisms 
$\phi: S_r(p) \to S_r(q)$. The joined graph $G \cup_{\phi} H$ by identifying points along $\phi$
is a $d$-dimensional graph of Euler characteristic $\chi(G) +\chi(H) -2 - \chi(S_r)$.
\begin{proof}
We have to show that (i) each unit sphere in the new graph is a $(d-1)$-dimensional graph, that (ii) each 
unit sphere has  Euler characteristic $1+(-1)^{d-1}$ and (iii) that $(d+1) v_d  = 2 v_{d-1}$ for the new graph. 
The statement is true for one dimensional graphs. We use induction to verify it in higher dimensions. 
Statements (i),(ii) need only to be proven for points on $S_1(p)$ or $S_1(q)$. 
The surgery has induced a smaller dimensional surgery for the unit spheres along $d-2$ 
dimensional graphs. By induction, the dimension is correct.
The Euler characteristic of each unit sphere stays because the $\chi(S_1)$ is the same 
than the Euler characteristic of union of the complement with the boundary. 
The joining does not change the property that every face of a $d$-dimensional simplex is connected to a 
$d$-dimensional simplex. Therefore $(d+1) v_d  = 2 v_{d-1}$ stays true. 
Finally, the Euler characteristic formula for the entire follows because the simplices 
add up, we remove two balls with Euler characteristic $1$ and double-count the intersection $S_r$. 
\end{proof}

For example, joining two simply connected two-dimensional graphs of Euler characteristic $2$ 
along a one-dimensional graph has Euler characteristic $2+2-2-0 = 2$. The new graph is still simply 
connected. If we join two $2$-dimensional doughnut graphs of Euler characteristic $0$
along a circle, we get a graph with Euler characteristic $0+0-2-0 = -2$.  \\

\begin{figure}
\parbox{9.0cm}{ \scalebox{0.34}{\includegraphics{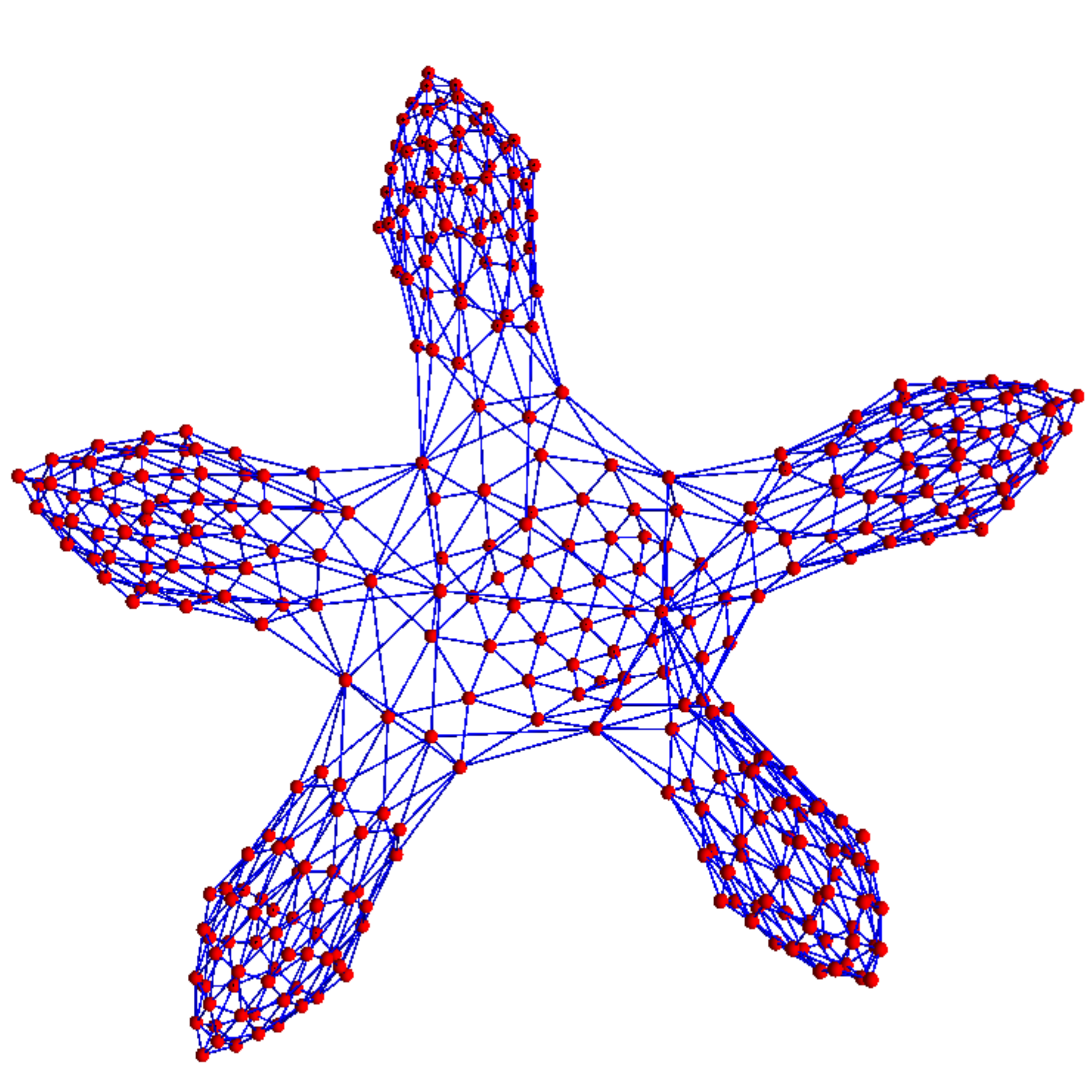}} }
\caption{
An example of a graph obtained by joining 6 fullerene type graphs along circles. 
The resulting graph is two dimensional but the curvature takes now both positive and 
negative signs. The Euler characteristic is still $2$ in this case. 
}
\label{graphjoinings}
\end{figure}

Also when identifying points of the graph, we get a new graph of the same dimension in general:
given a graph $G$ and assume that there are $(d-1)$-dimensional spheres $S_r(p) \subset G, S_r(q) \subset G$
for which $d(p,q)>r$ and such that there is a graph isomorphism: $\phi: S_r(p) \to  S_r(q)$. 
By identifying points along $\phi$, we get a new $d$-dimensional graph of Euler 
characteristic $\chi(G) - 2 - \chi(S_r(p))$.
For two-dimensional graphs, this identification corresponds to adding "handles" 
so that the new graph has Euler characteristic $\chi(G)-2$. \\

Finally, one could ask about using random processes to generate d-dimensional graphs. Among 
all $2^{n(n-1)/2}$ undirected graphs with $n$ vertices the $d$-dimensional ones of course have probability 
which goes to zero for $n \to \infty$. But we can generate graphs by "aggrevation" starting with a ball
of radius one, adding more balls. For two-dimensional graphs we will end up with a planar graph which 
when finite, has Euler characteristic $2$. Picture~(\ref{graphjoinings}) gives an example.

\begin{center}
\begin{tabular}{llllll} 
polyhedron      &  vertices  &  edges &   faces &  curvatures & $\chi$ \\ \hline
octahedron      &  6         &  12    &   8     &  2/6        & $2$ \\
icosahedron     &  12        &  30    &   20    &  1/6        & $2$ \\
stellated cube  &  14        &  36    &   24    &  0,1/3      & $2$ \\
drilled cube    &  16        &  48    &   32    &  0          & $0$ \\ 
drilled stellated cube &  16     &  96    &   64    &  1/3,-1/3   & $0$ \\  
\end{tabular}
\end{center} 

\begin{figure}
\parbox{14.8cm}{
\parbox{7.1cm}{
\begin{tabular}{lll}
cube         &  1  & \scalebox{0.08}{\includegraphics{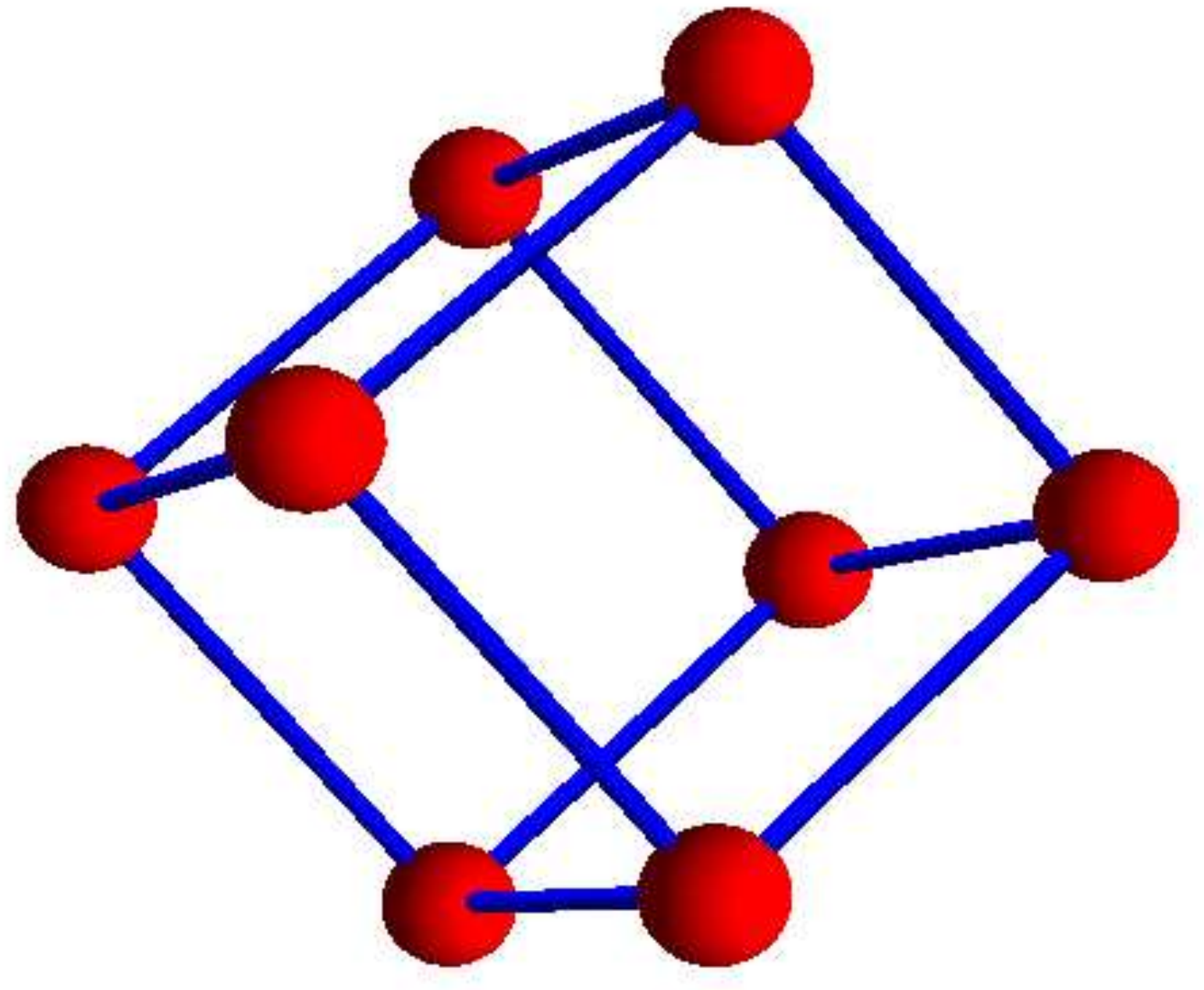}}  \\
tetrahedron  &  3  & \scalebox{0.08}{\includegraphics{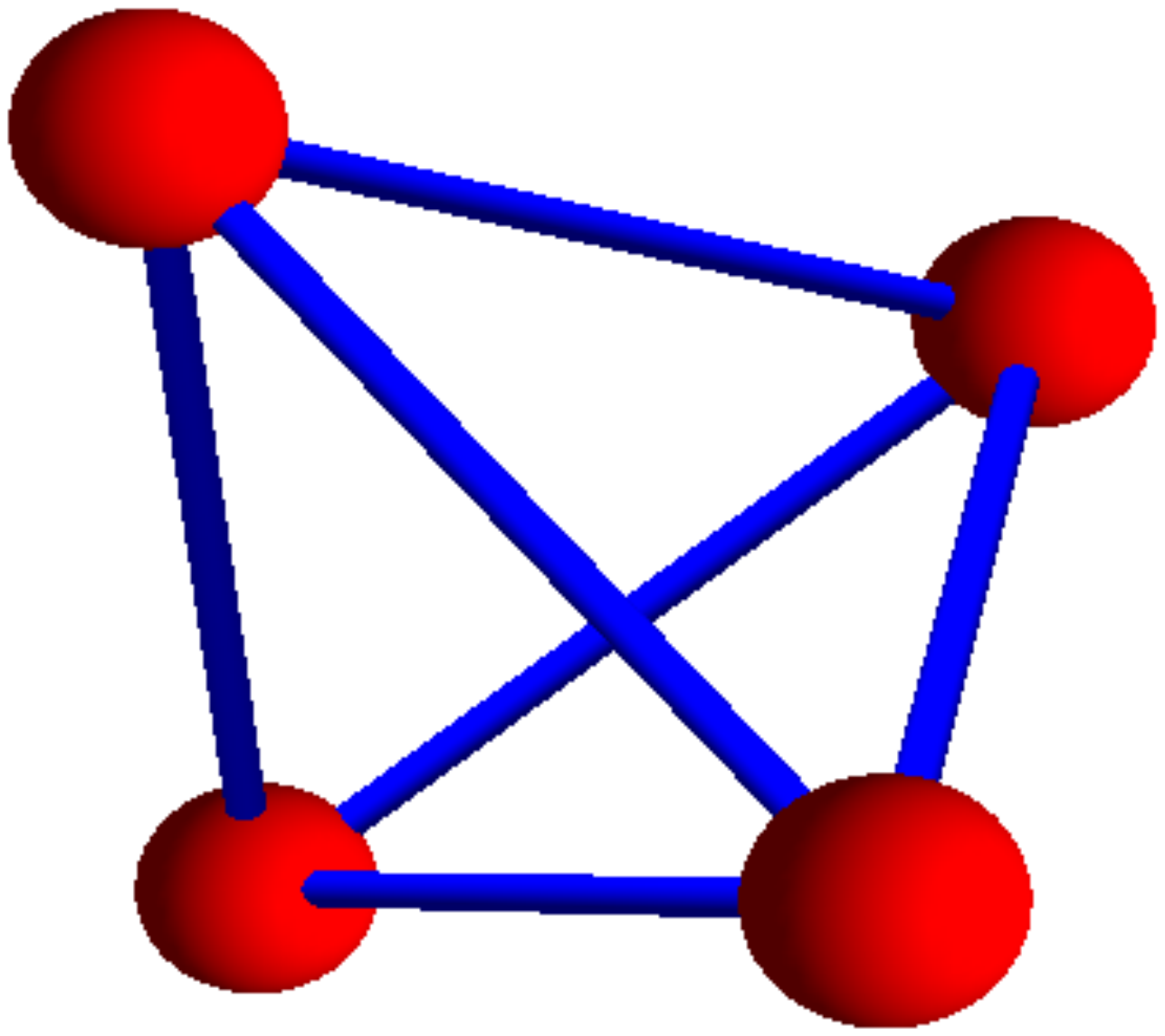}}  \\
octahedron   &  2  & \scalebox{0.08}{\includegraphics{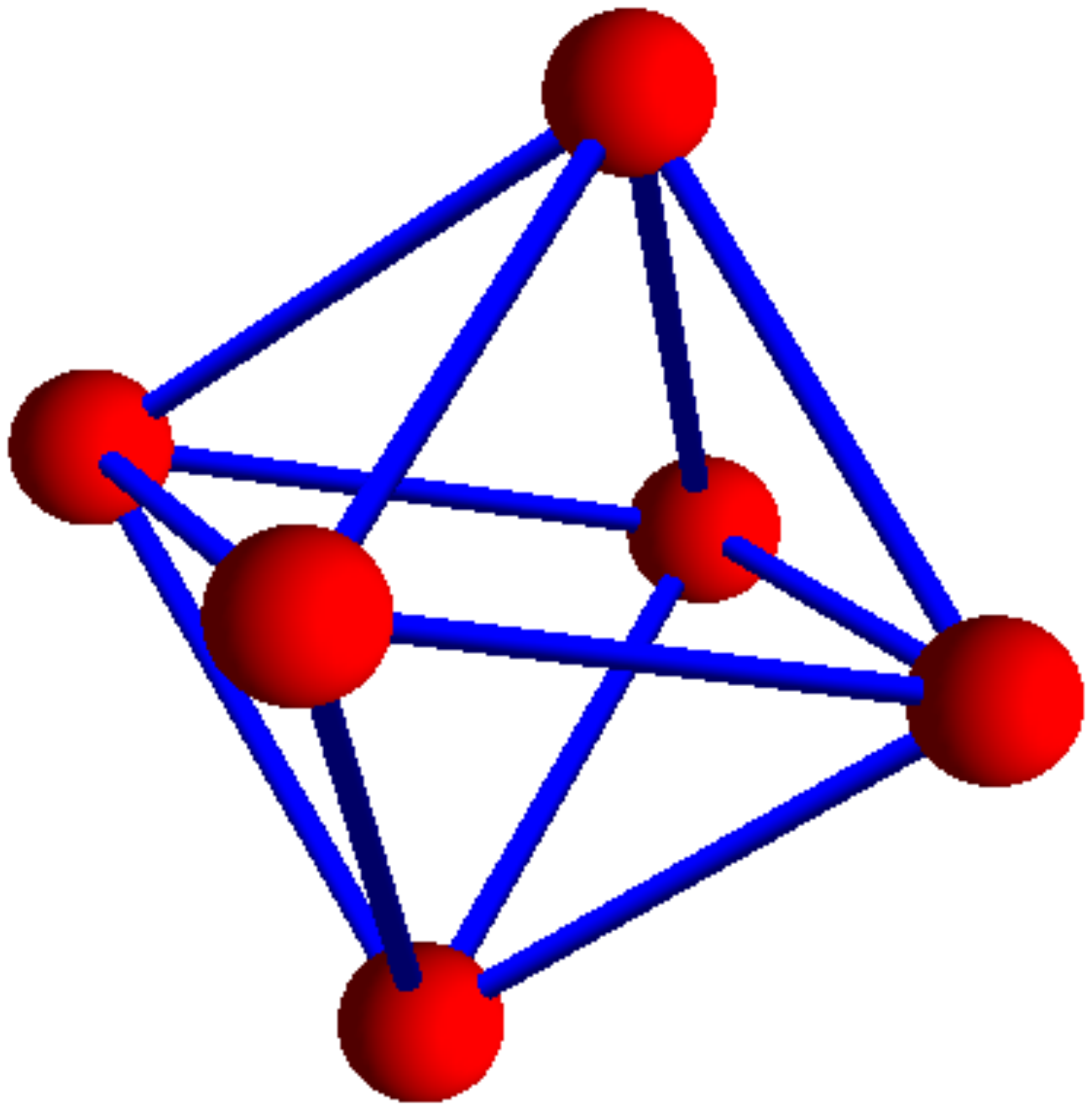}}  \\
icosahedron  &  2  & \scalebox{0.08}{\includegraphics{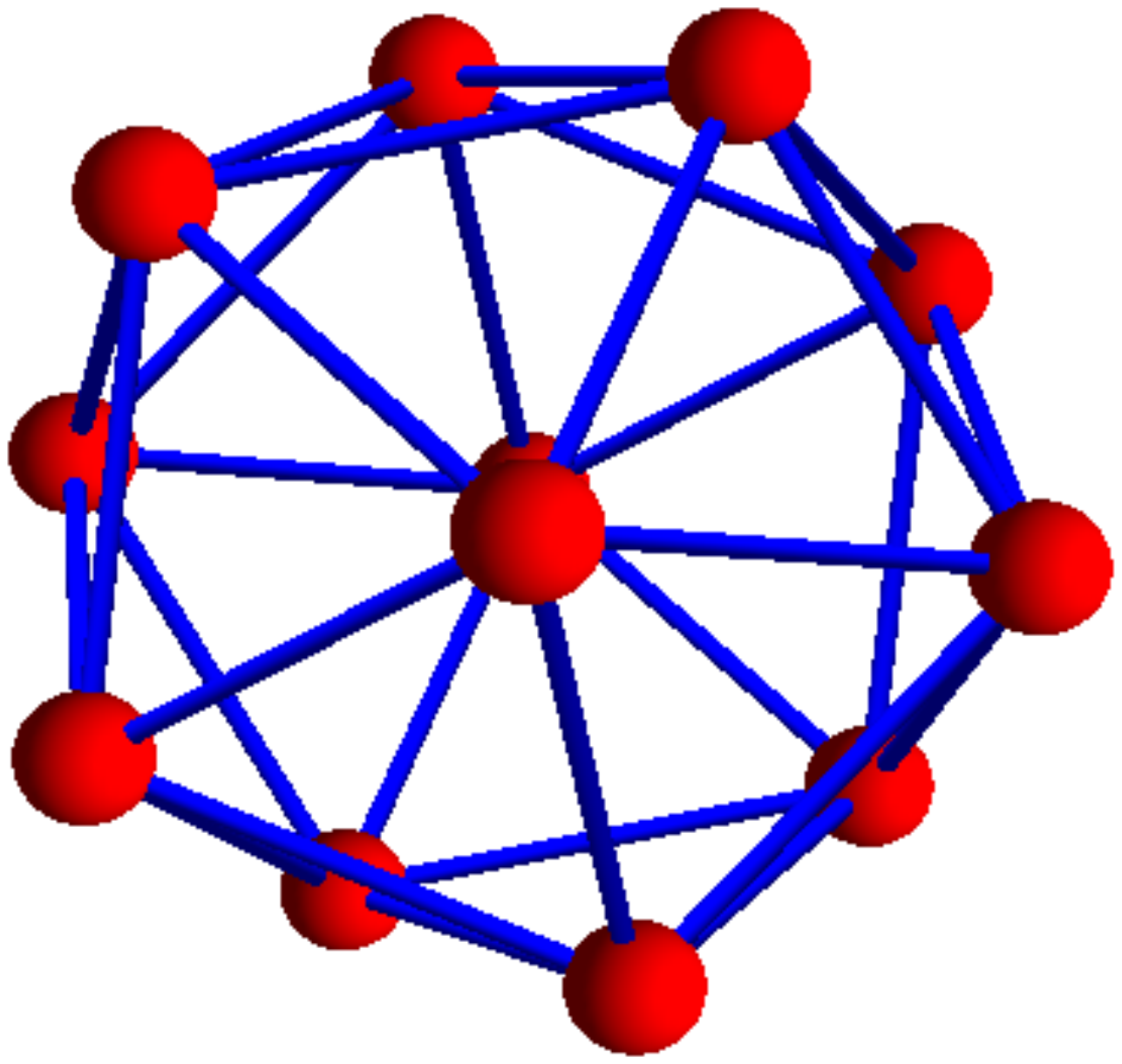}}  \\
dodecahedron &  1  & \scalebox{0.08}{\includegraphics{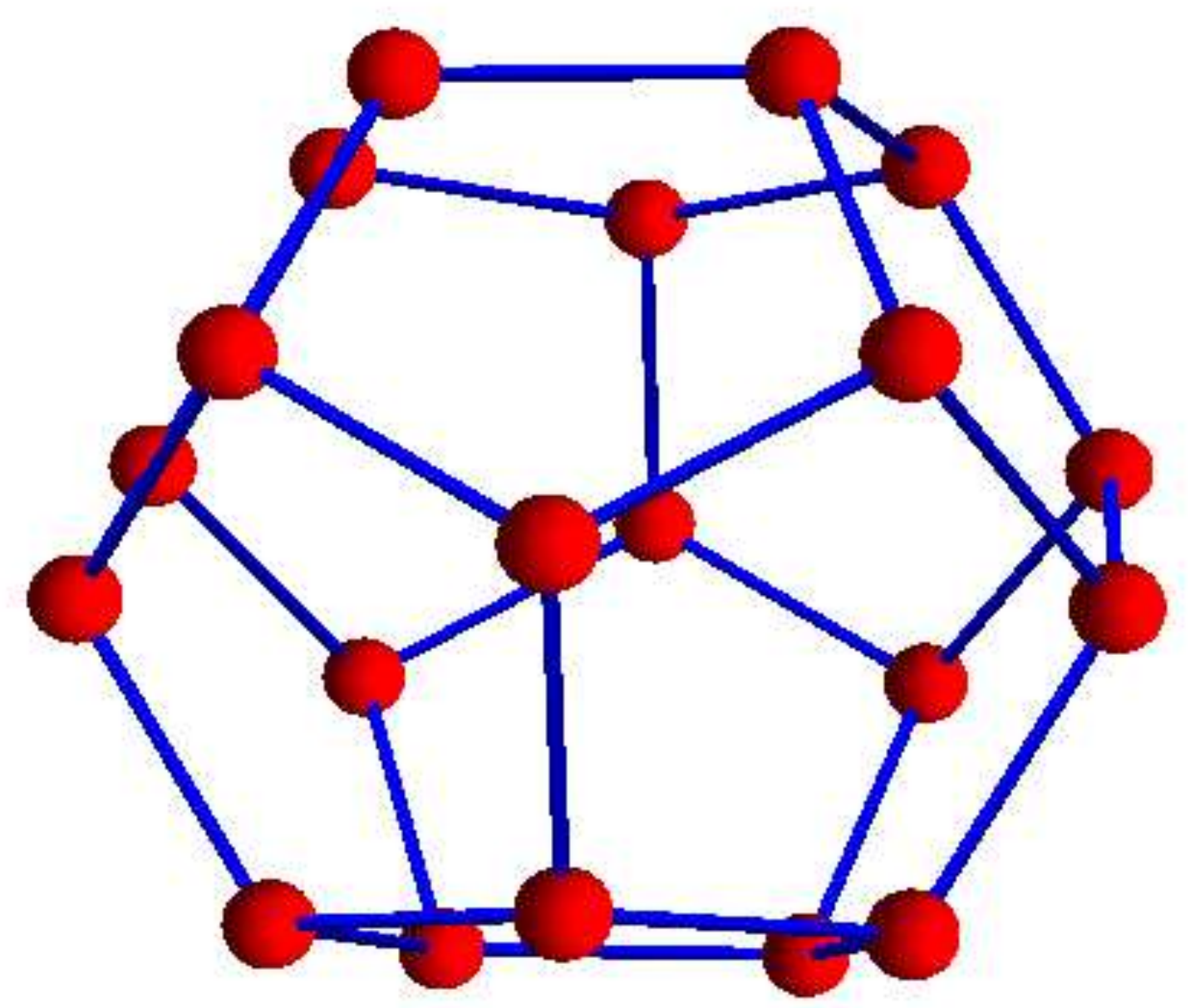}} 
\end{tabular} }
\parbox{7.1cm}{
\begin{tabular}{lll}
stellated cube &  1  & \scalebox{0.08}{\includegraphics{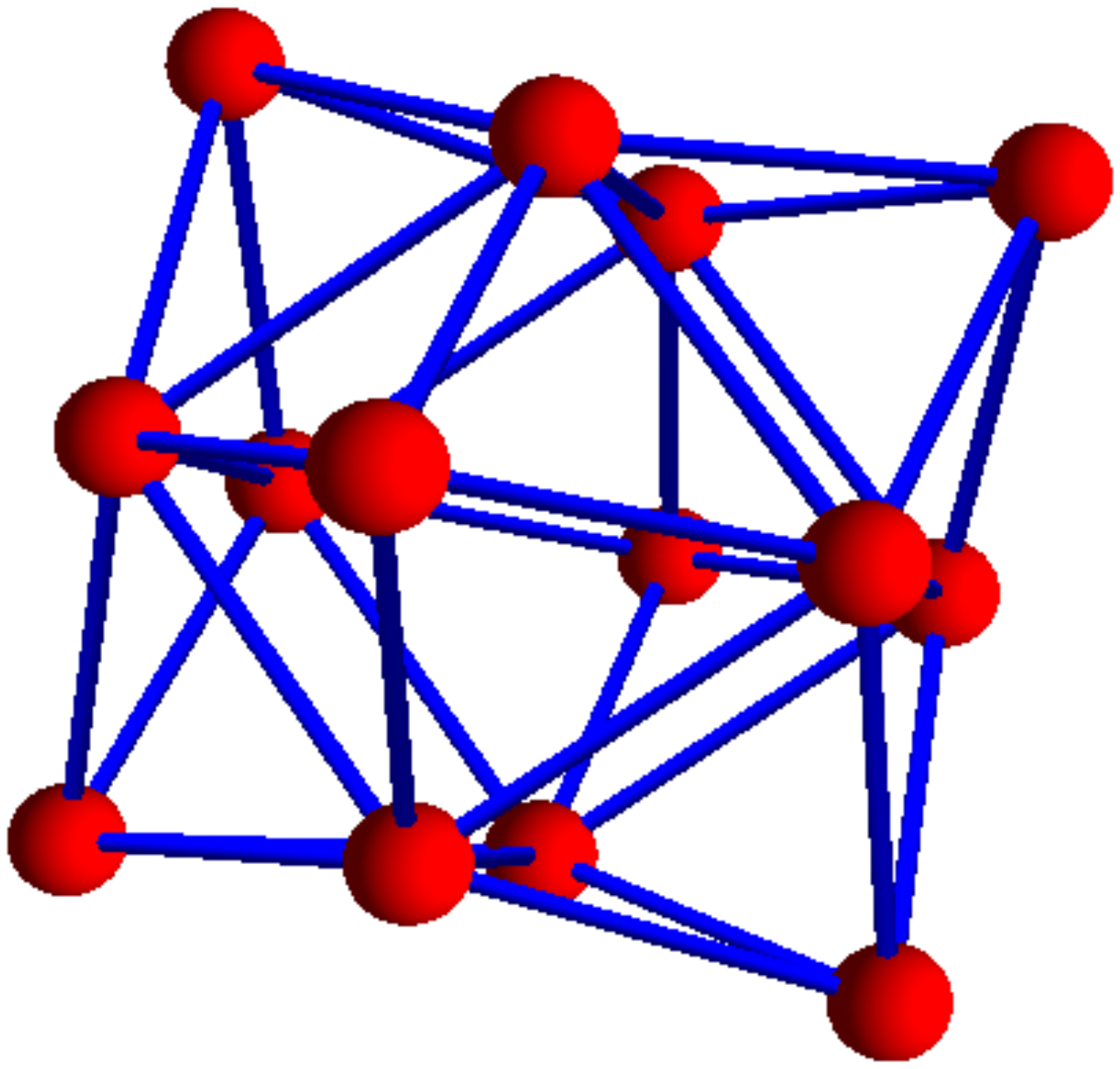}}  \\
stellated dodecahedron &  2  & \scalebox{0.08}{\includegraphics{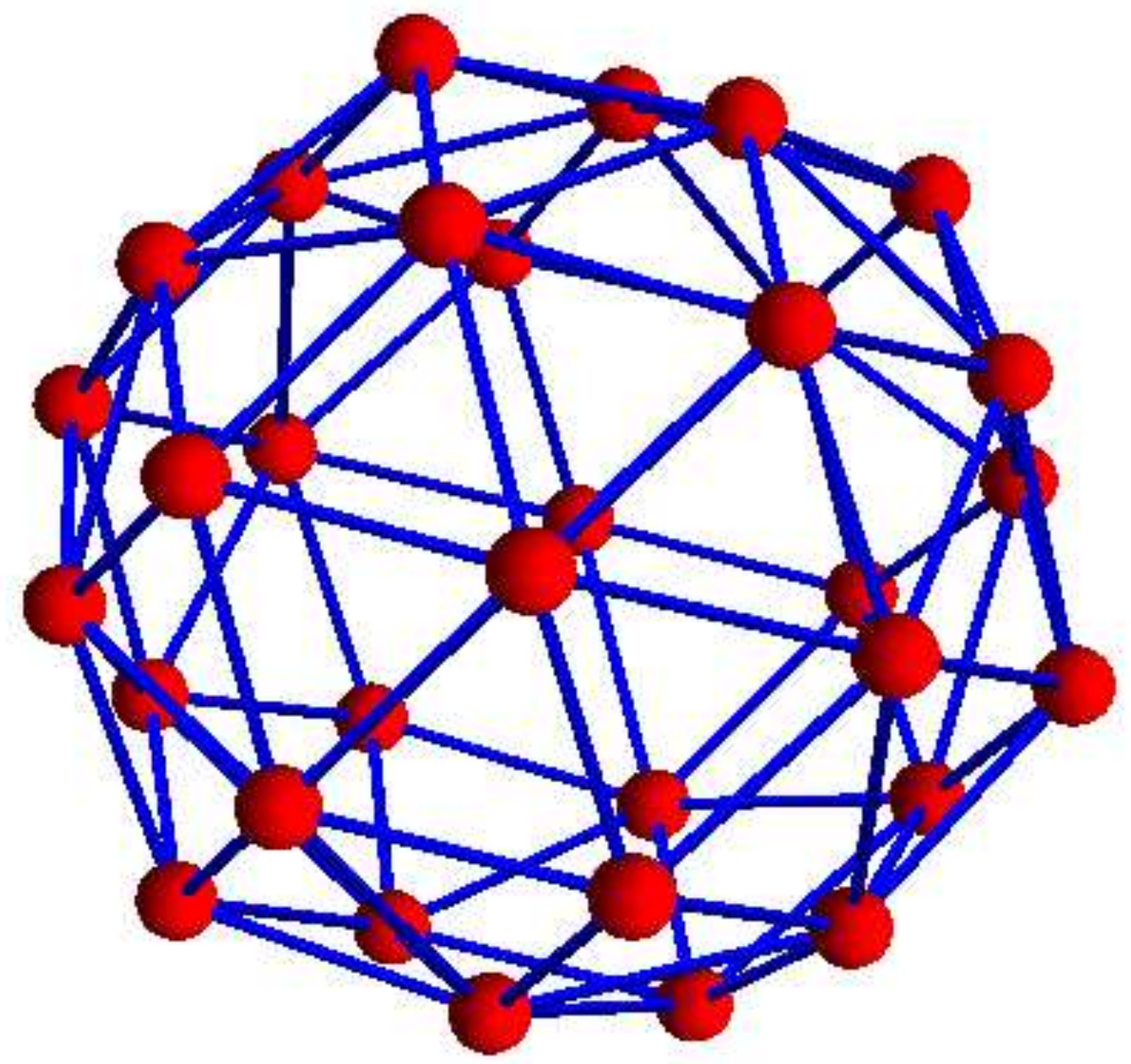}}  \\
drilled cube  &  2  & \scalebox{0.08}{\includegraphics{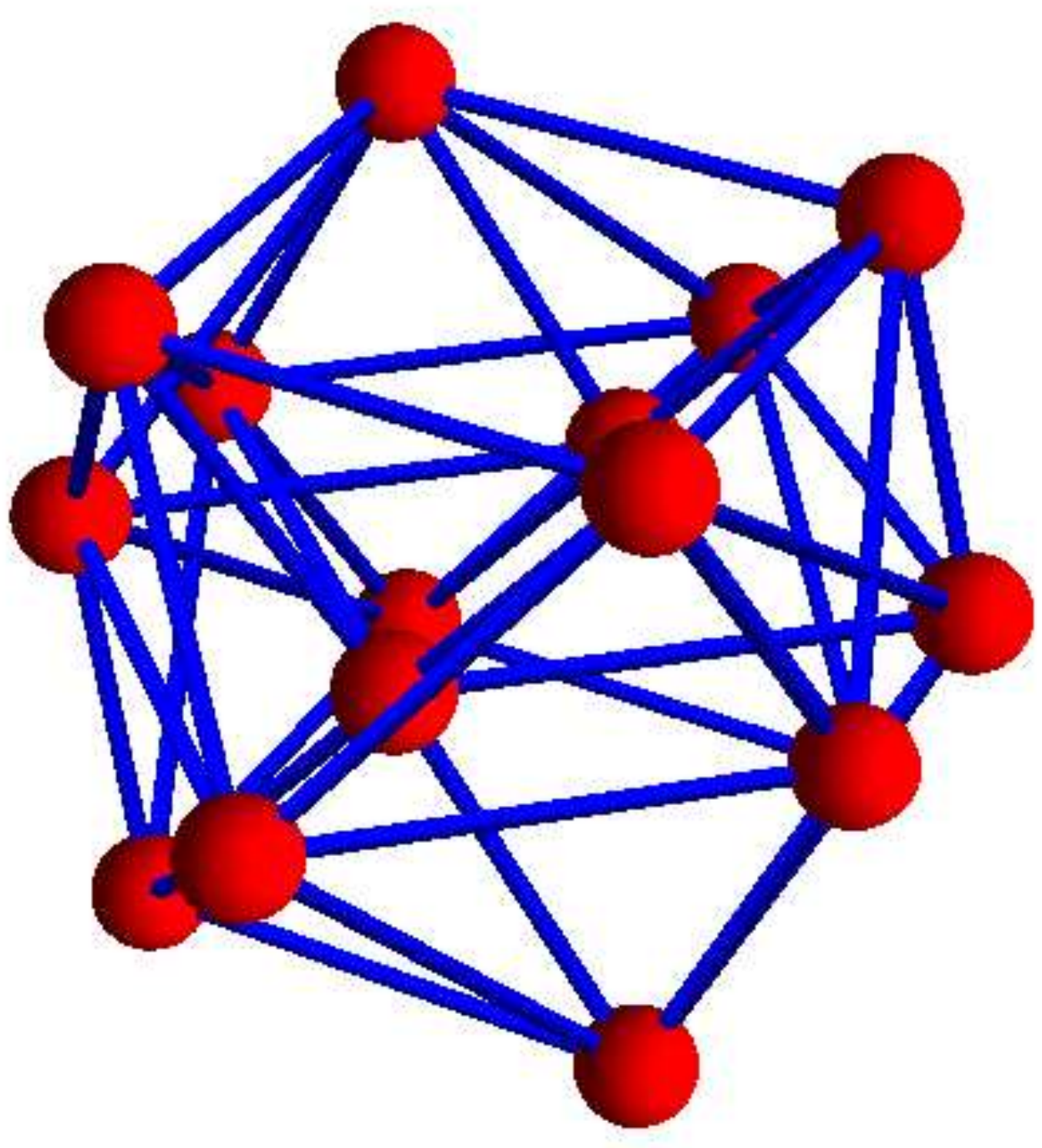}}  \\
drilled stellated cube  &  2  & \scalebox{0.08}{\includegraphics{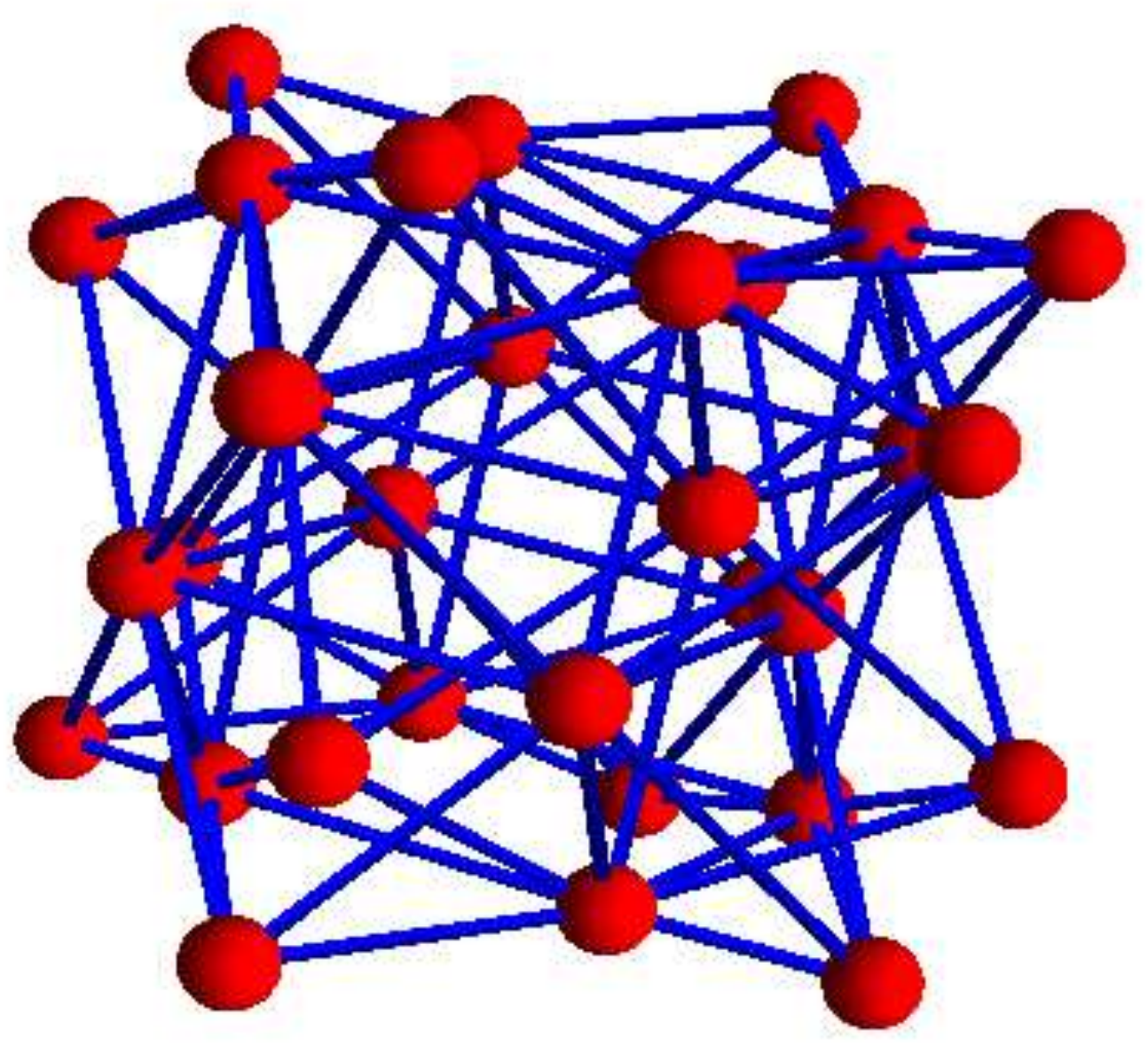}}  \\
stellated icosahedron &  2  & \scalebox{0.08}{\includegraphics{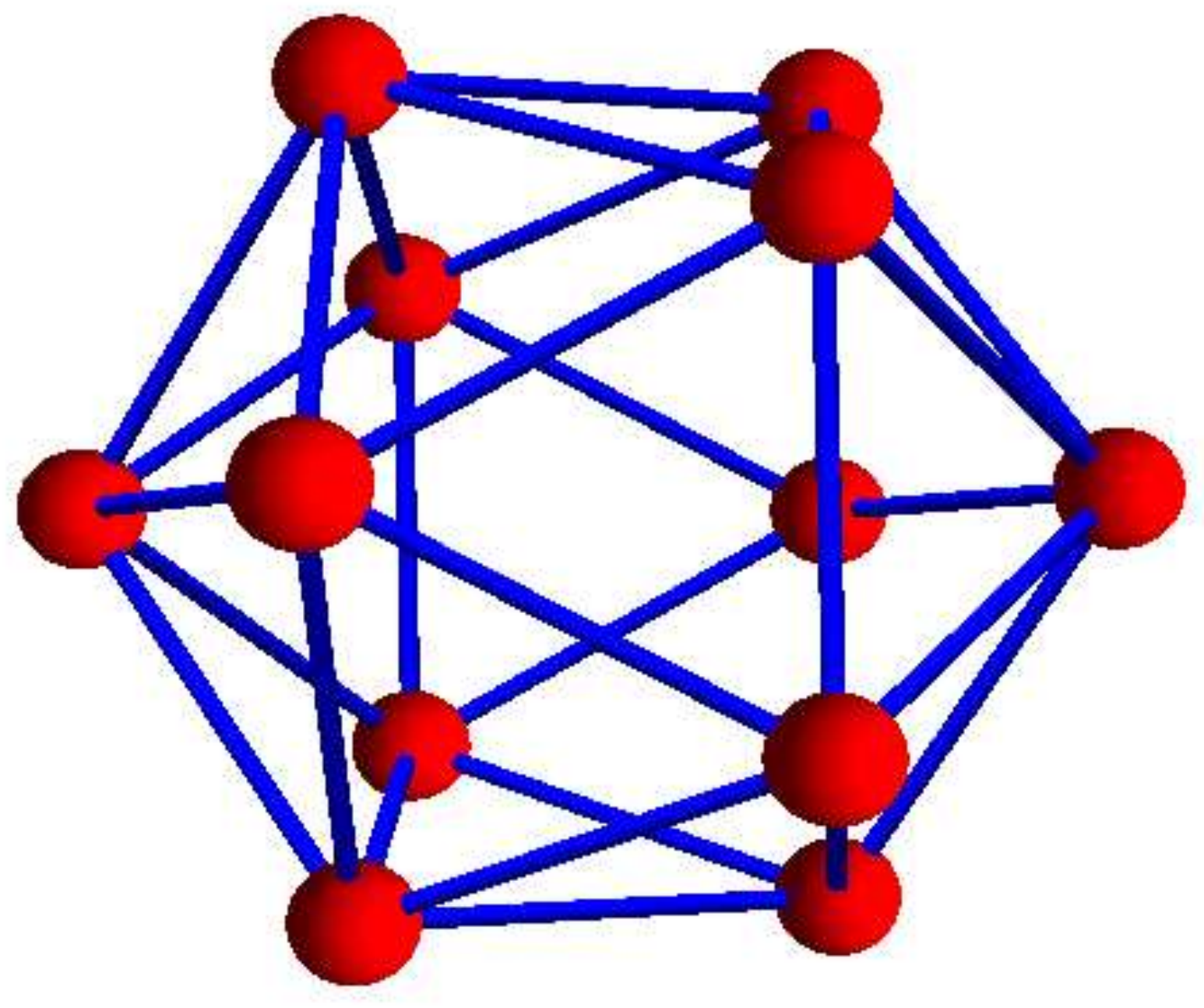}}  \\
\end{tabular} 
}}
\caption{
{\bf Example 2.1.}
Platonic solids and variants. The number displayed left to the graph is the dimension.
The only platonic solids which are $2$-dimensional are the octahedron and icosahedron. }
\end{figure}

The dimension of a vertex $p$ is the average dimension of its unit sphere $S_1(p)$ plus 1. 
The dimension of the graph can then be defined as the average of the dimensions of its points.
Figure~\ref{archimedean} shows the dimensions for the 13 Archimedean solids and their 
duals, the Catalan solids. \\

\begin{figure}
\parbox{14.8cm}{
\parbox{7.2cm}{
\begin{tabular}{ccc}
\scalebox{0.08}{\includegraphics{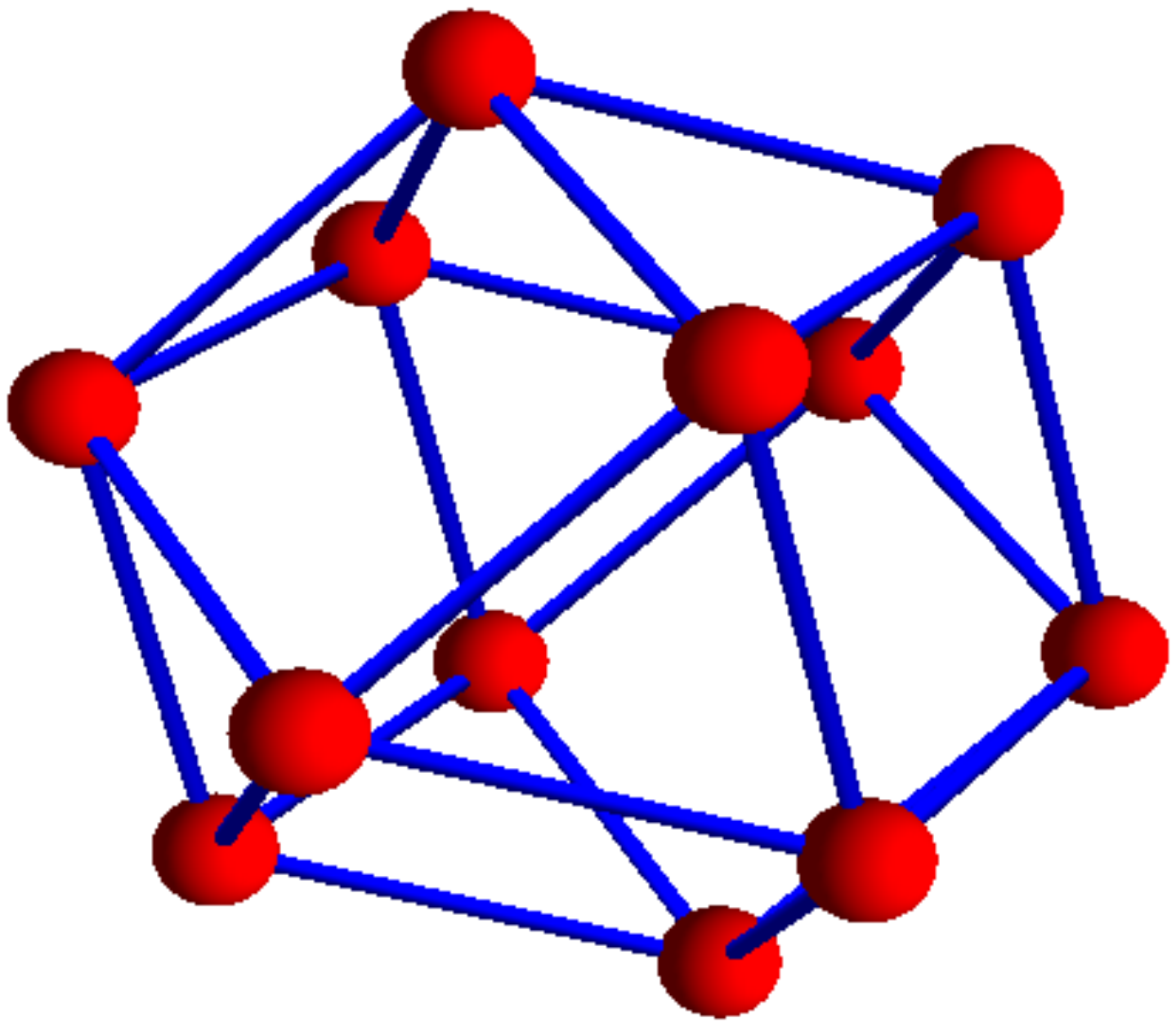}}  &
\scalebox{0.08}{\includegraphics{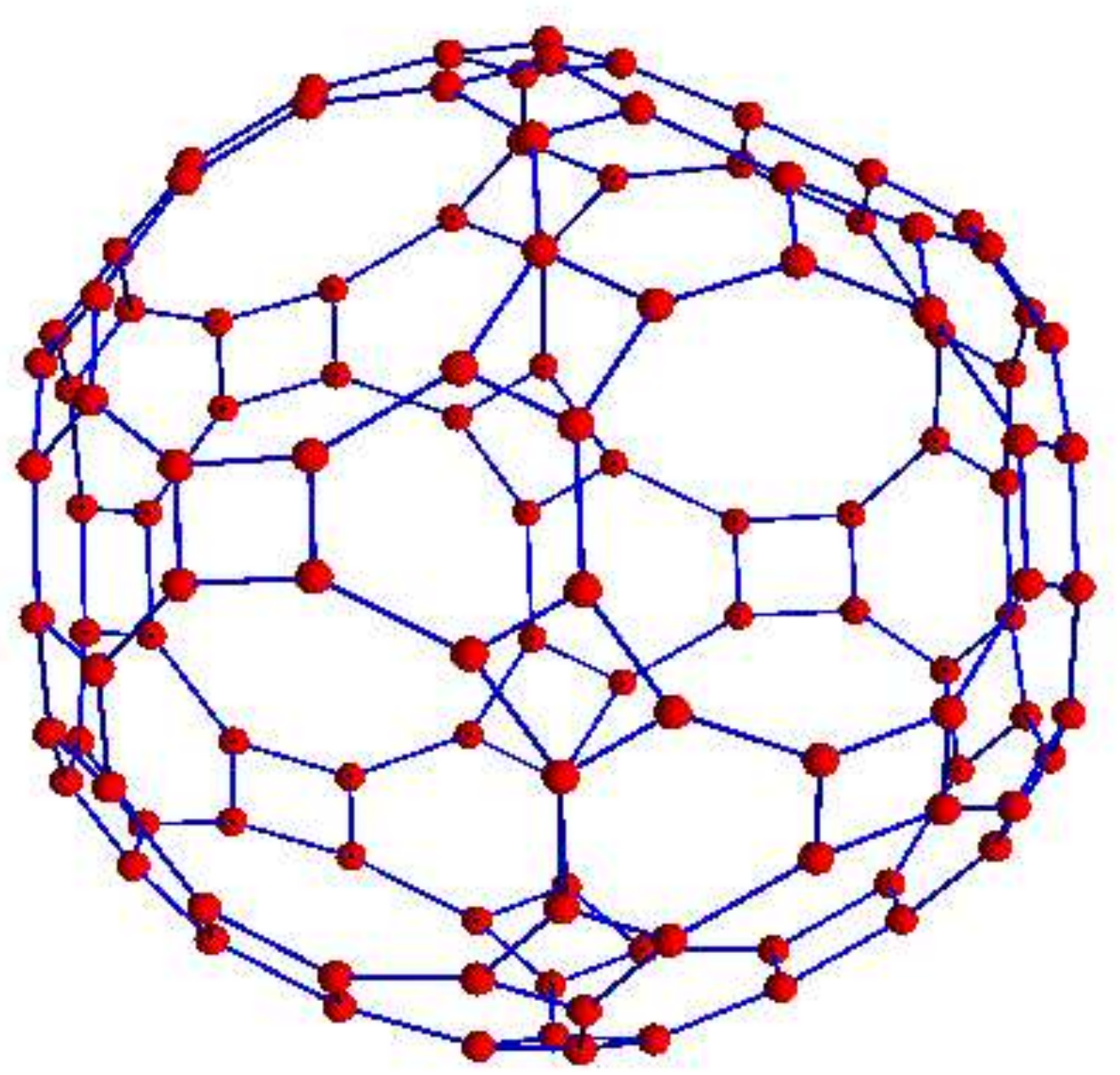}}  &
\scalebox{0.08}{\includegraphics{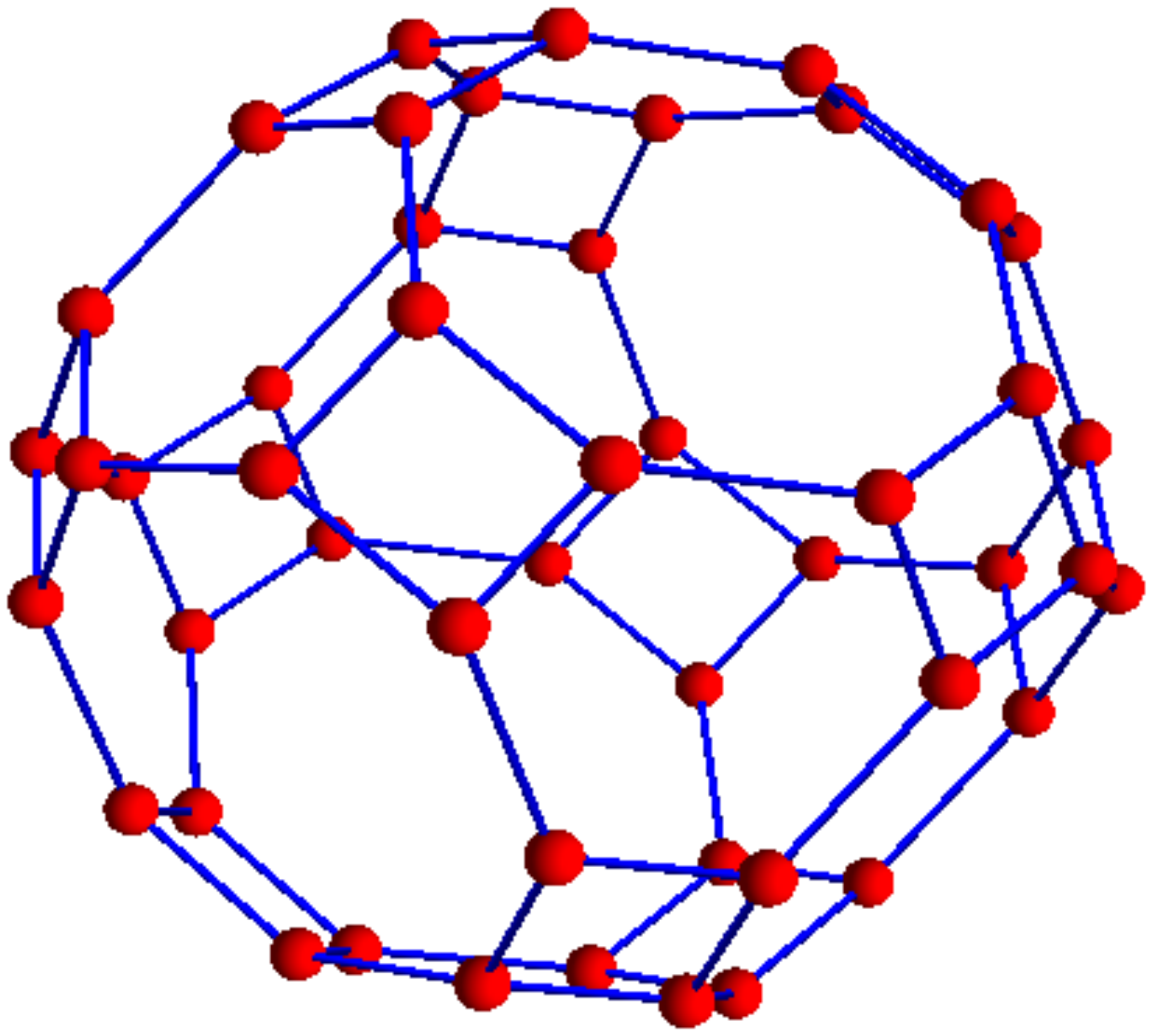}}  \\
(2,-4)                                                            &
(1,-60)                                                           &
(1,-24)                                                           \\
\scalebox{0.08}{\includegraphics{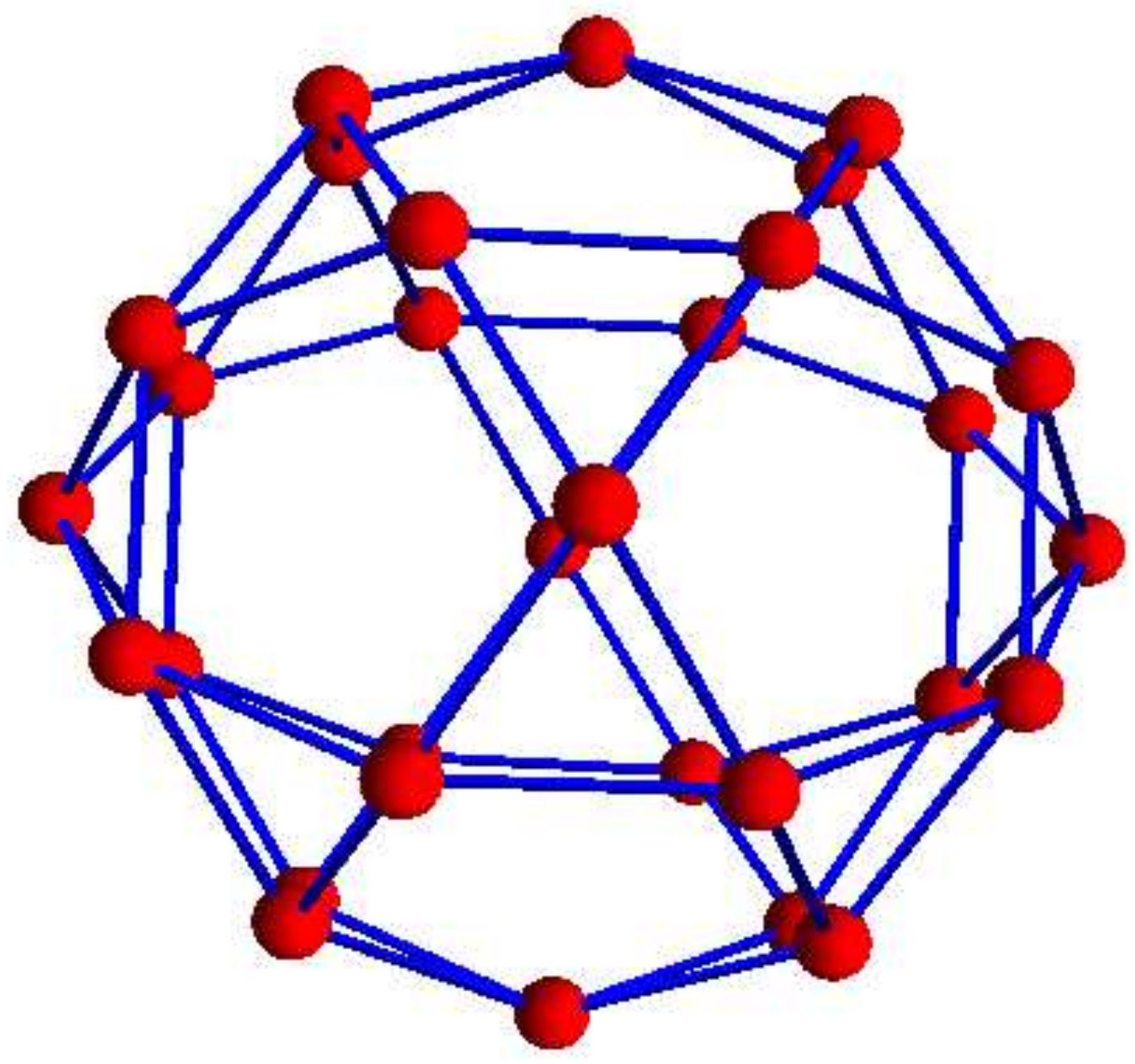}}  &
\scalebox{0.08}{\includegraphics{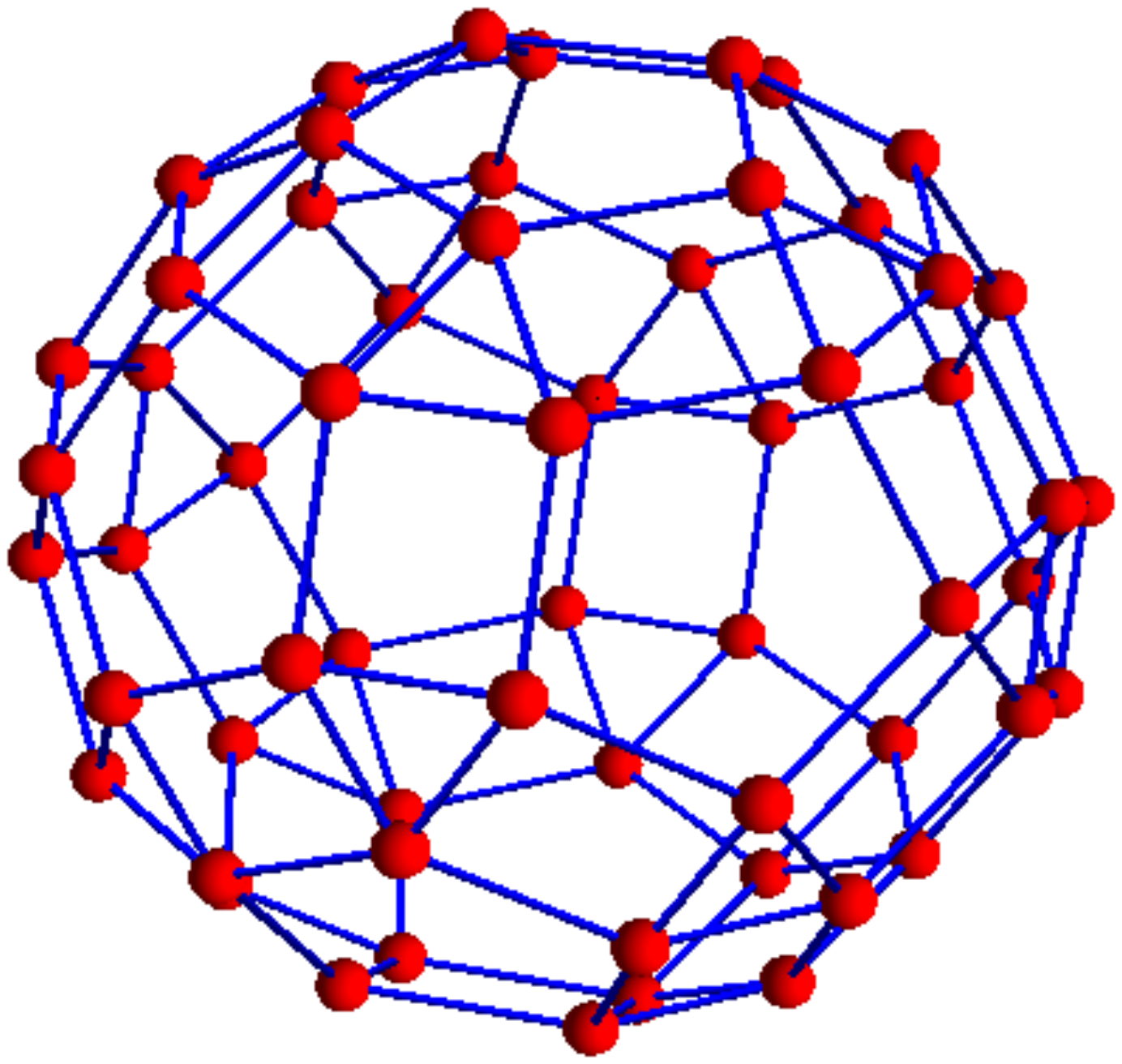}}  &
\scalebox{0.08}{\includegraphics{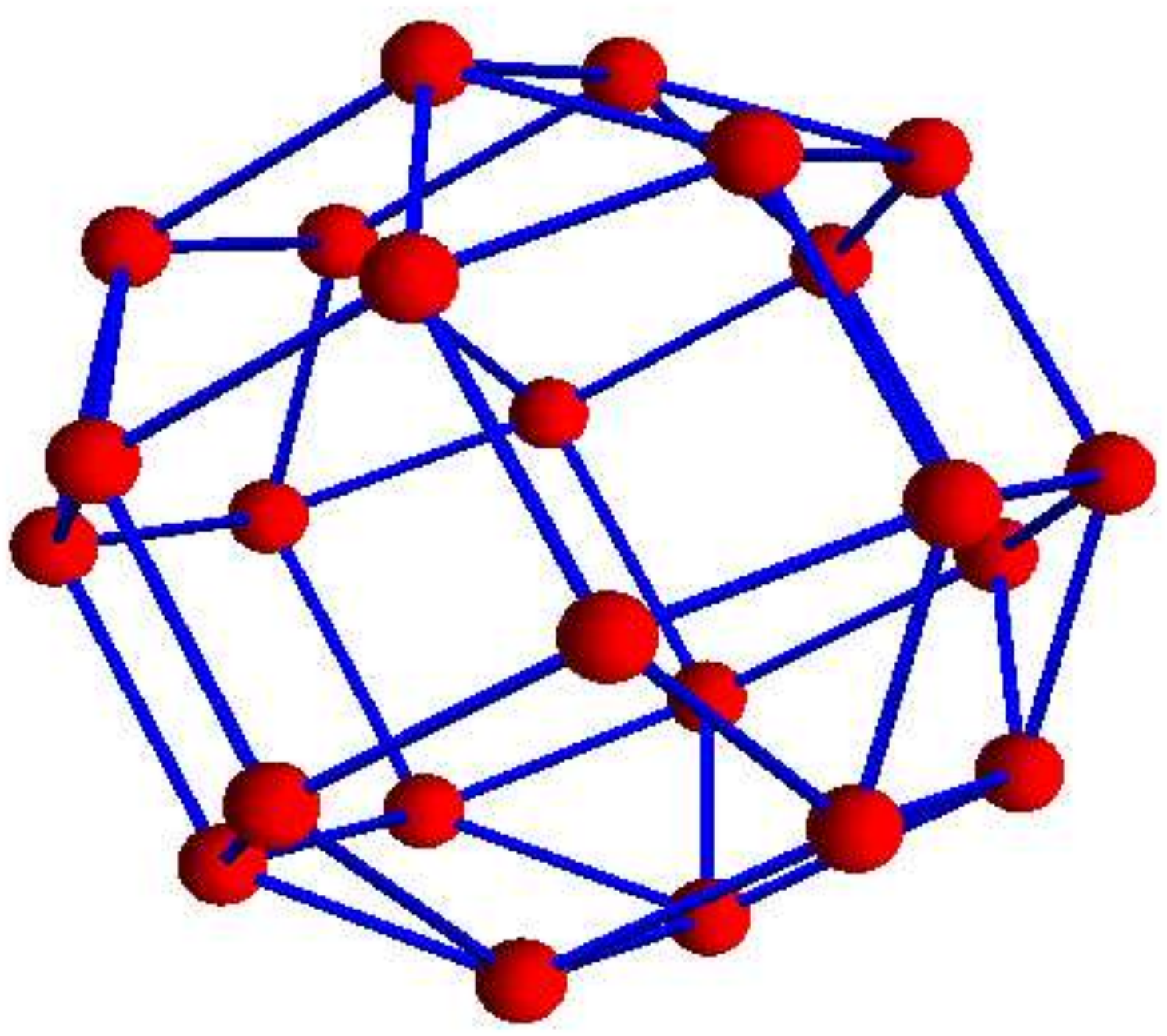}}  \\
(2,-10)                                                           &
(3/2,-40)                                                         &
(3/2,-16)                                                        \\
\scalebox{0.08}{\includegraphics{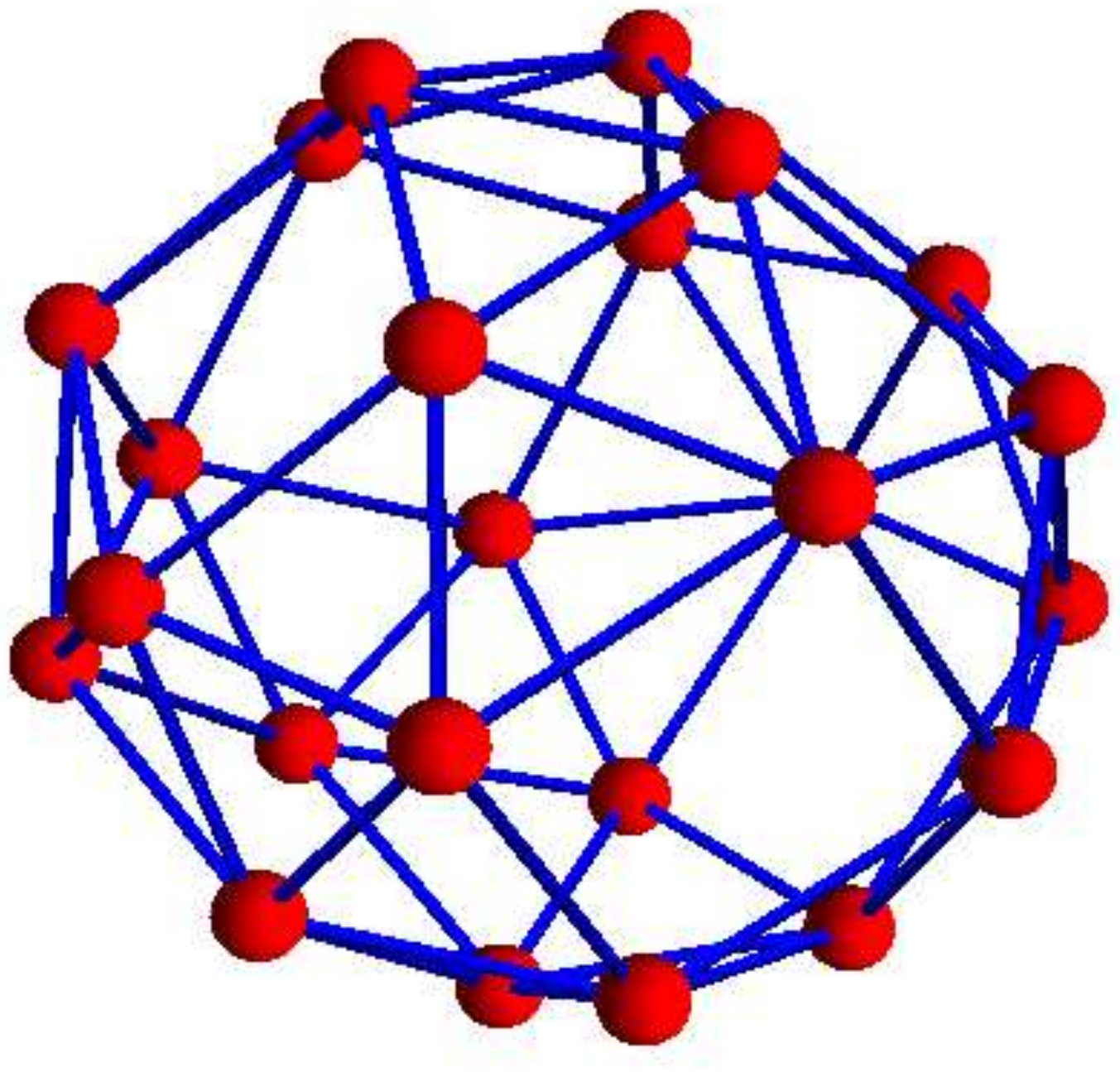}}  &
\scalebox{0.08}{\includegraphics{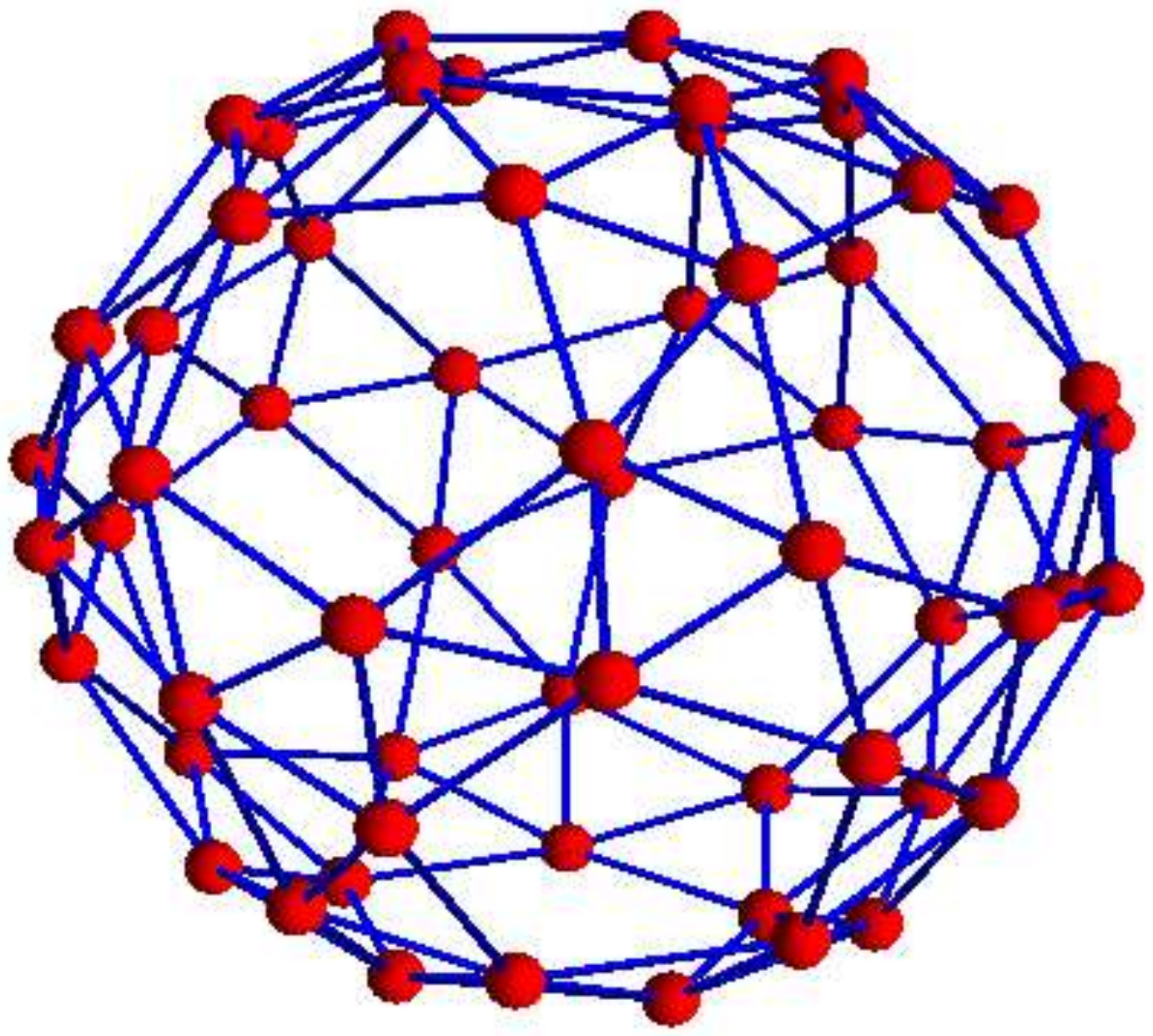}}  &
\scalebox{0.08}{\includegraphics{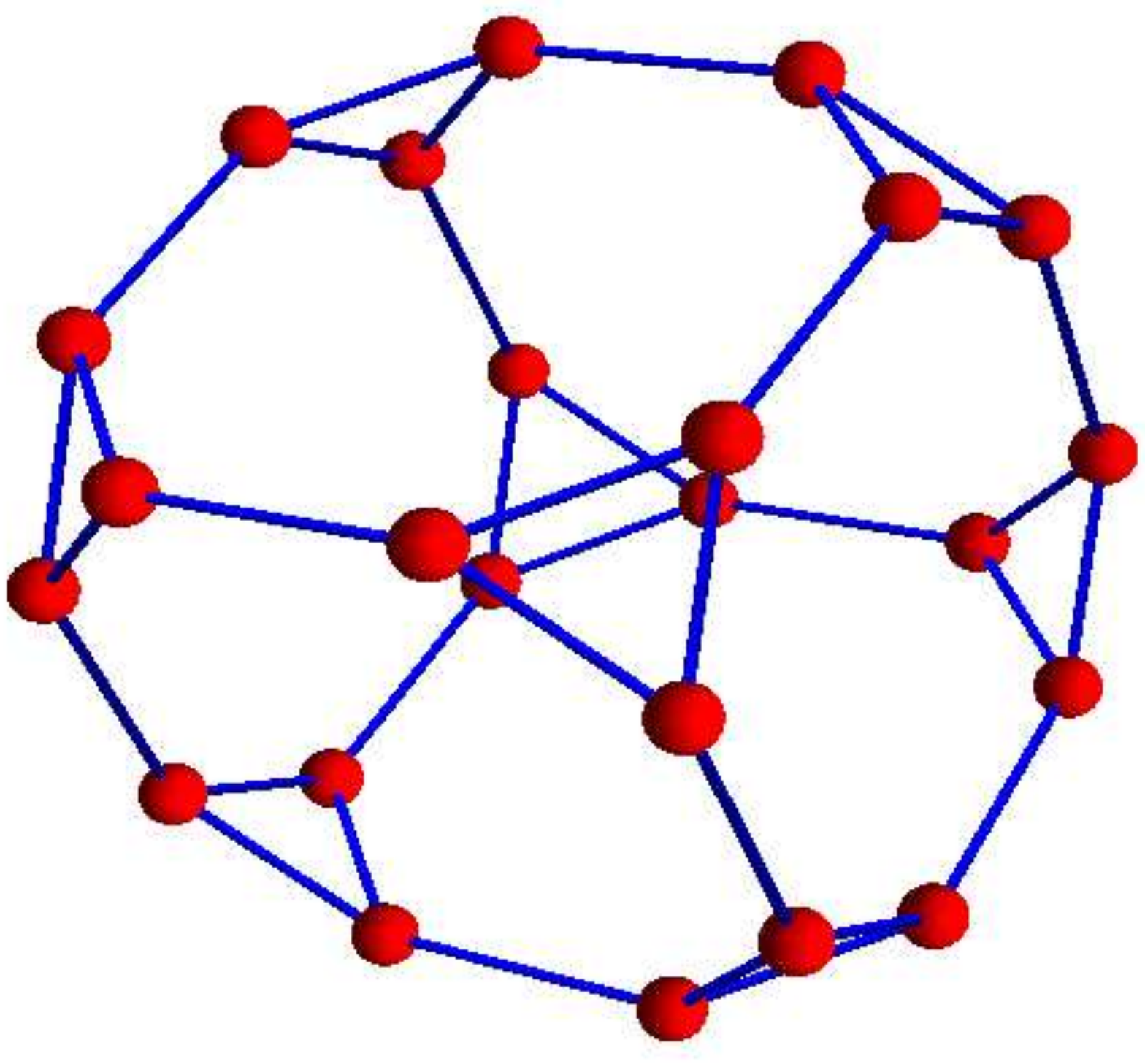}}  \\
(2,-4)                                                            &
(2,-10)                                                           &
(5/3,-4)                                                          \\
\scalebox{0.08}{\includegraphics{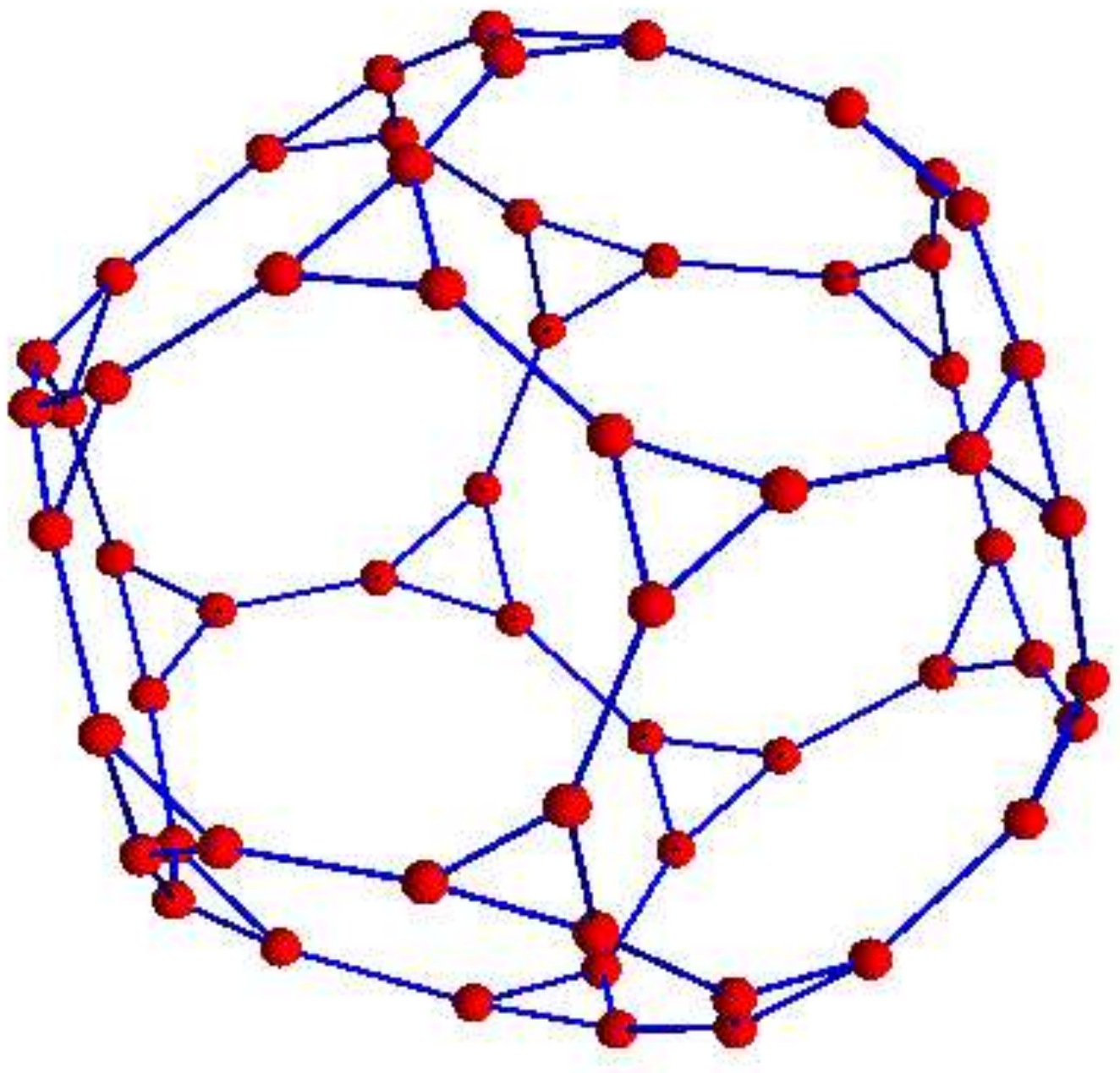}}  &
\scalebox{0.08}{\includegraphics{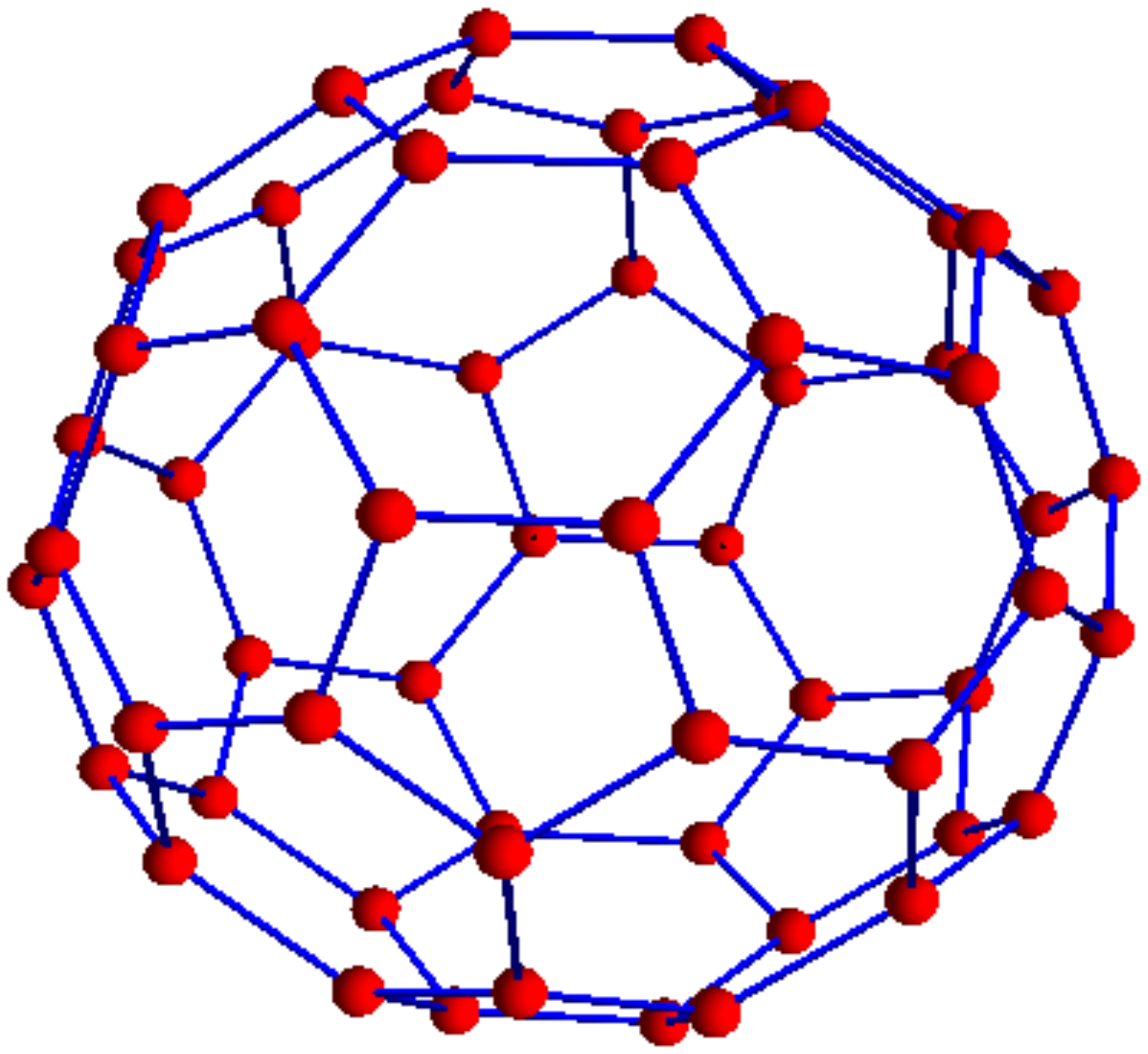}}  &
\scalebox{0.08}{\includegraphics{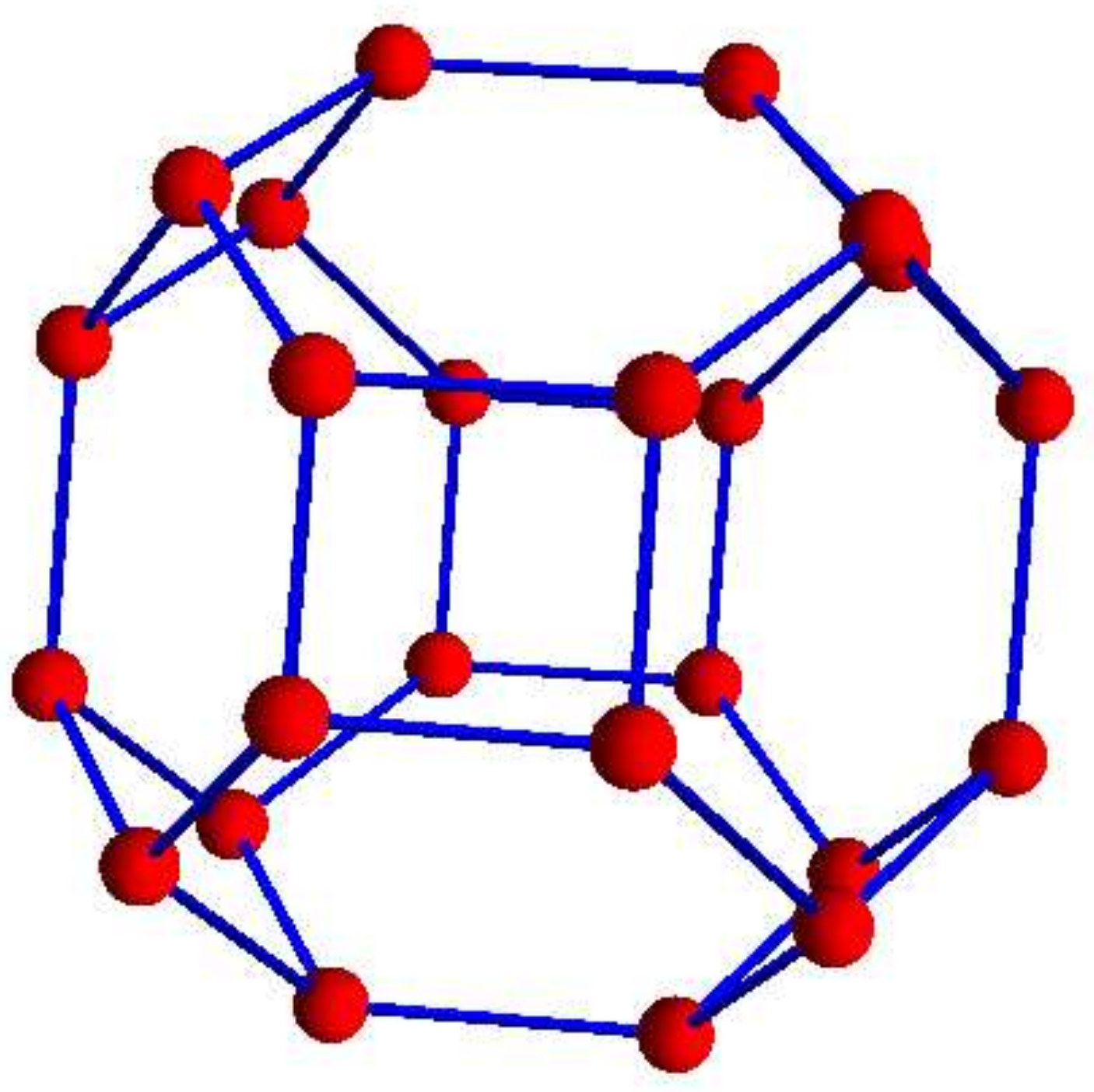}}  \\
(5/3,-10)                                                         &
(1,-30)                                                           &
(1,-12)                                                           \\
\scalebox{0.08}{\includegraphics{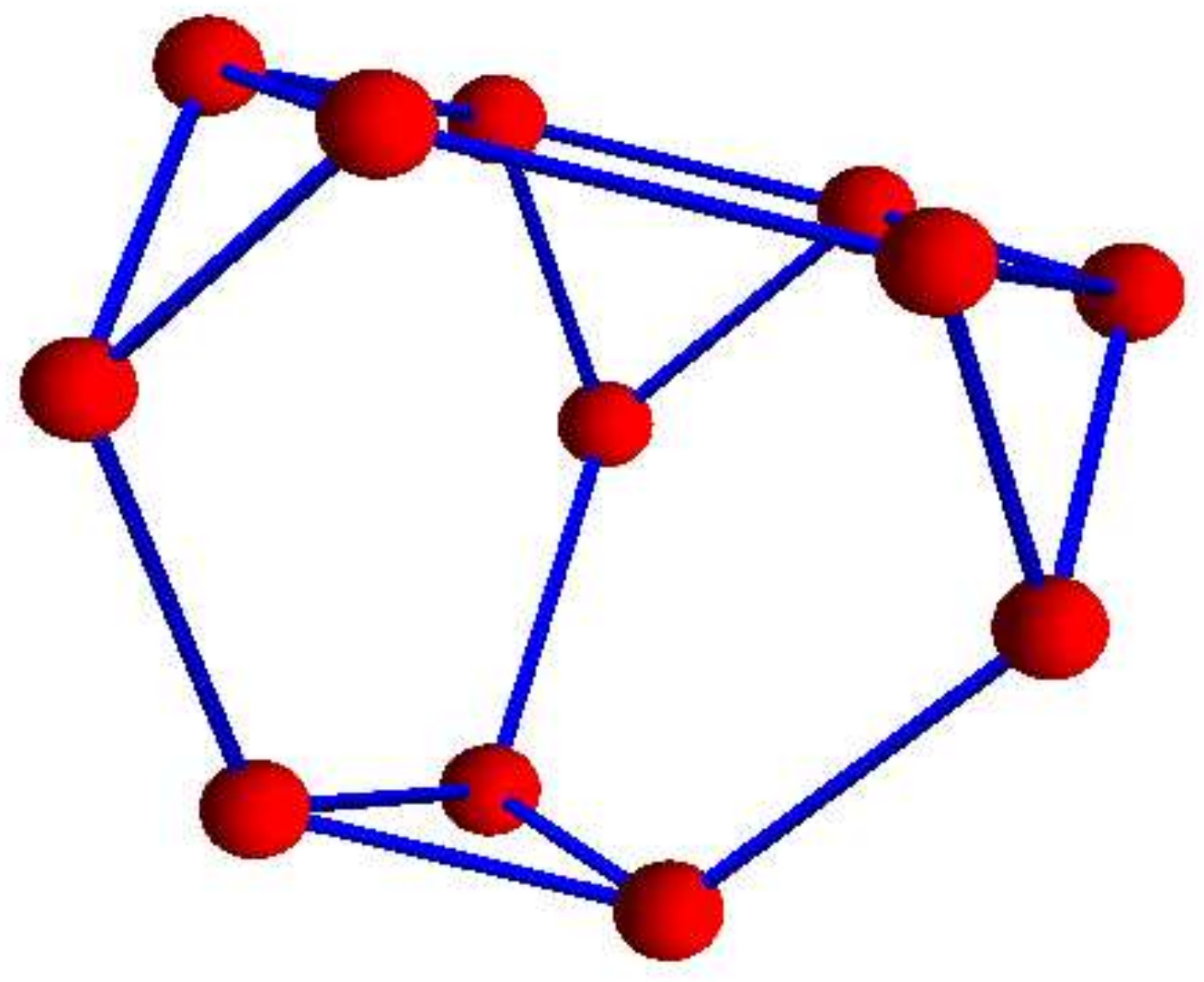}}  & &  \\
(5/3,-2)                                                         & &  
\end{tabular}
}
\parbox{7.2cm}{
\begin{tabular}{ccc}
\scalebox{0.08}{\includegraphics{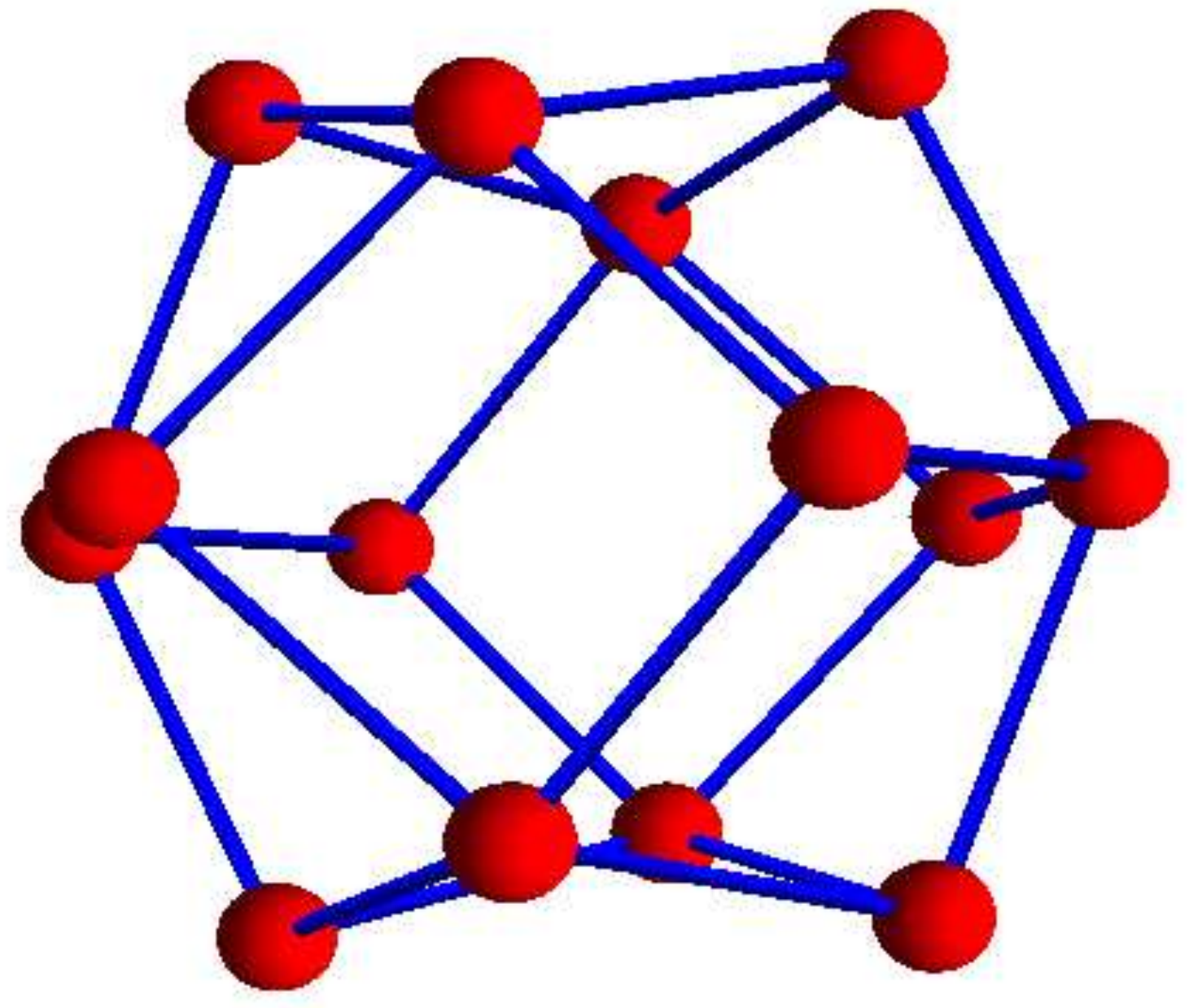}}  &
\scalebox{0.08}{\includegraphics{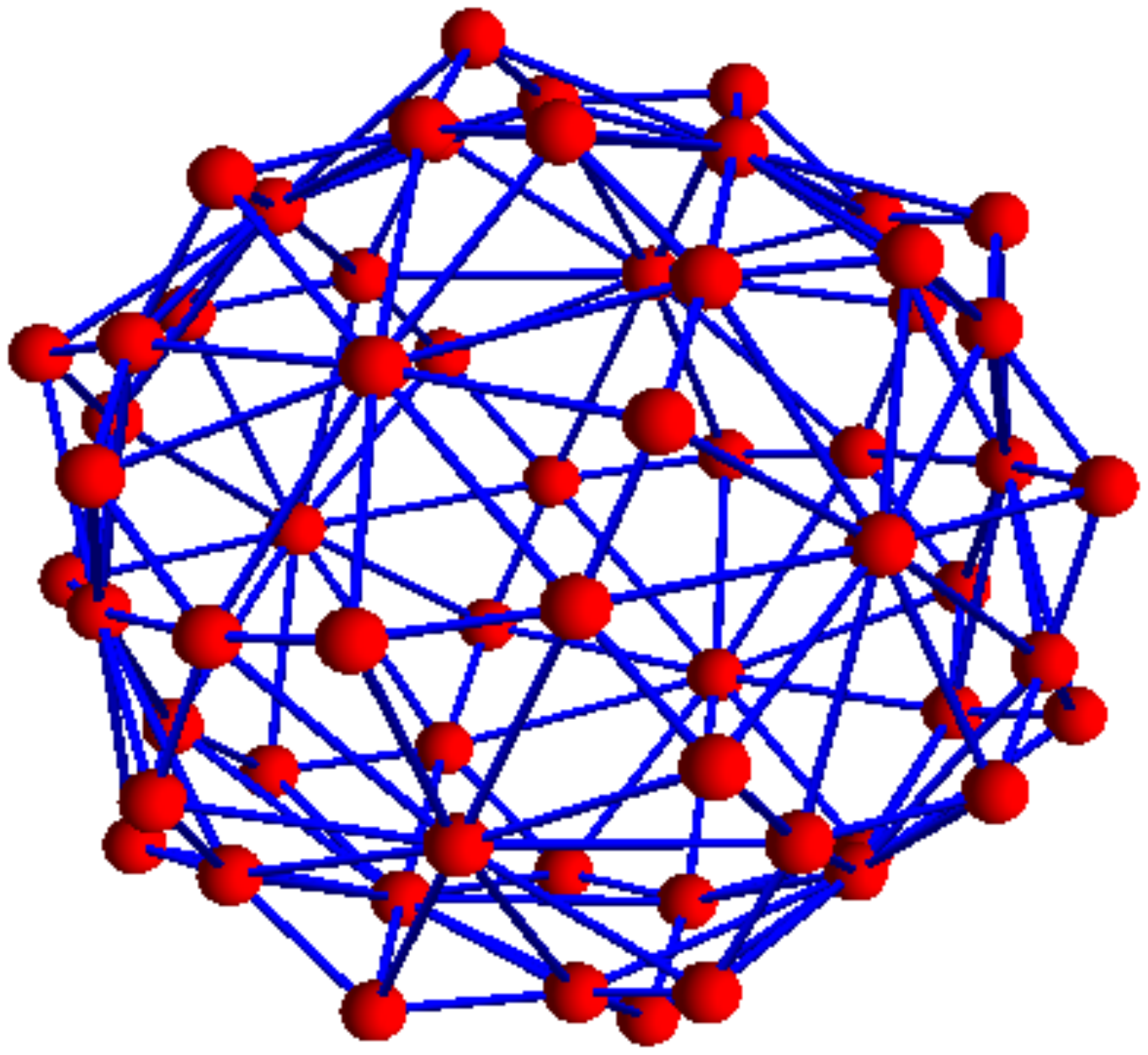}}  &
\scalebox{0.08}{\includegraphics{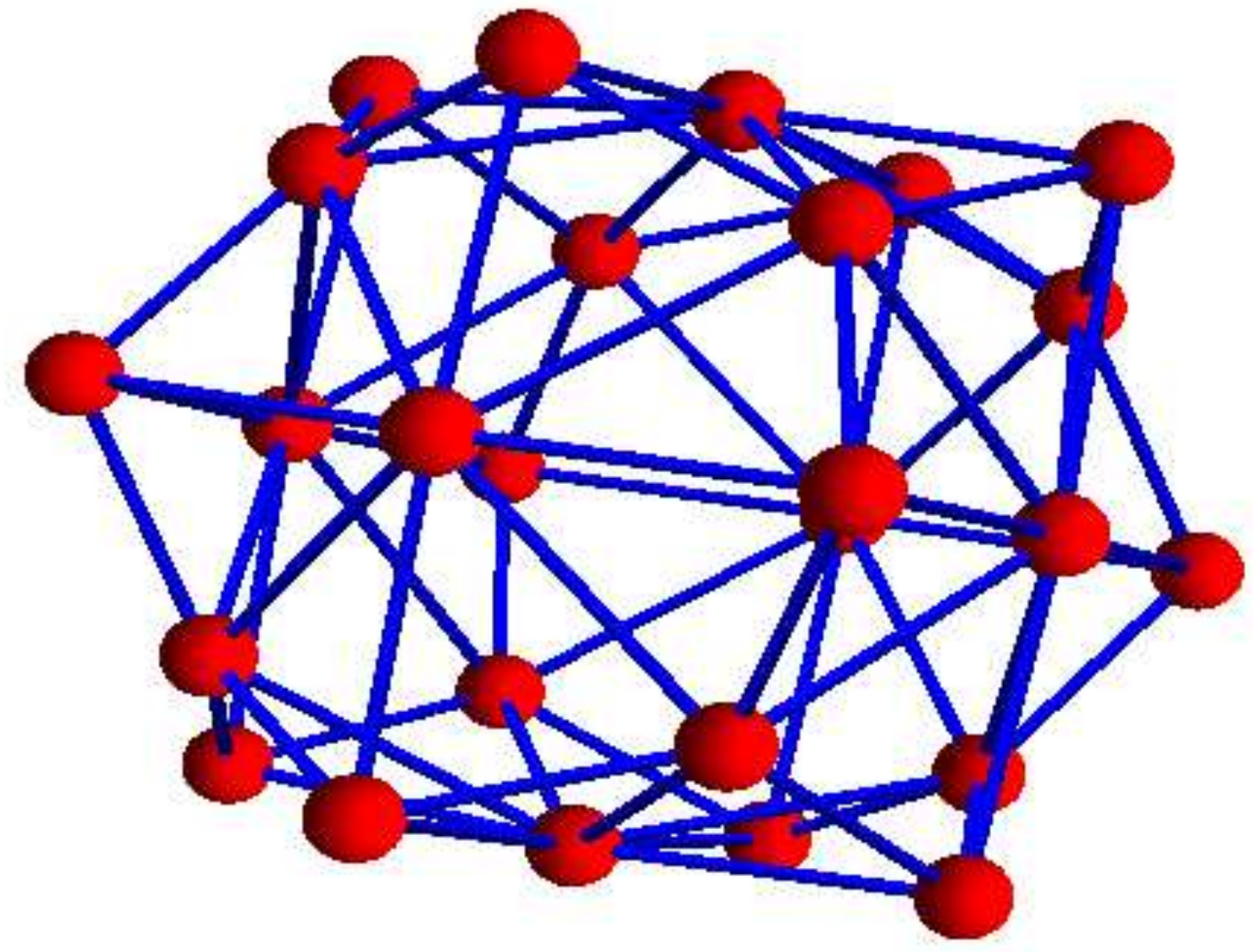}} \\
(1,-10)                                                       &
(2,2)                                                         &
(2,2)                                                         \\
\scalebox{0.08}{\includegraphics{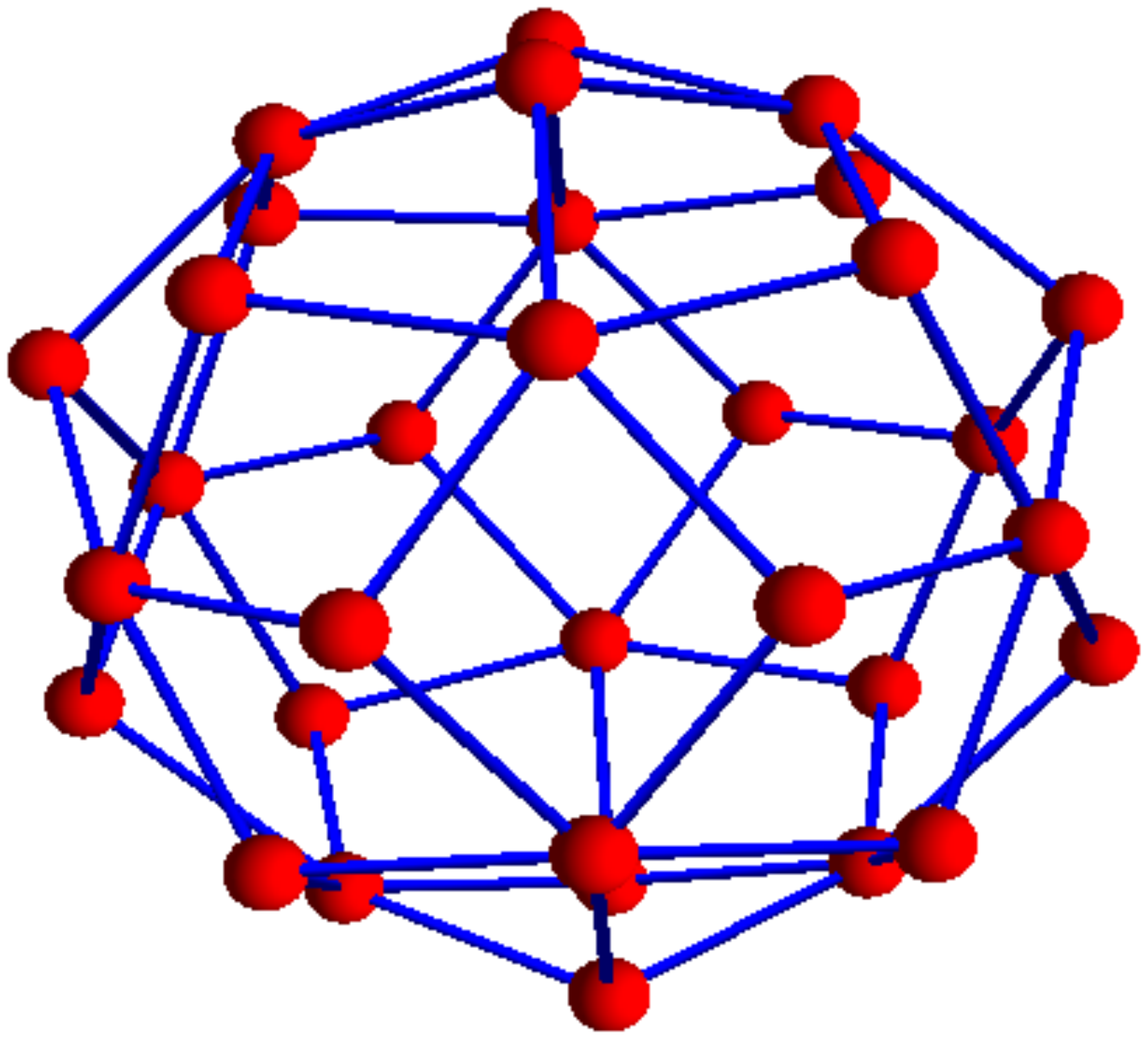}}  &
\scalebox{0.08}{\includegraphics{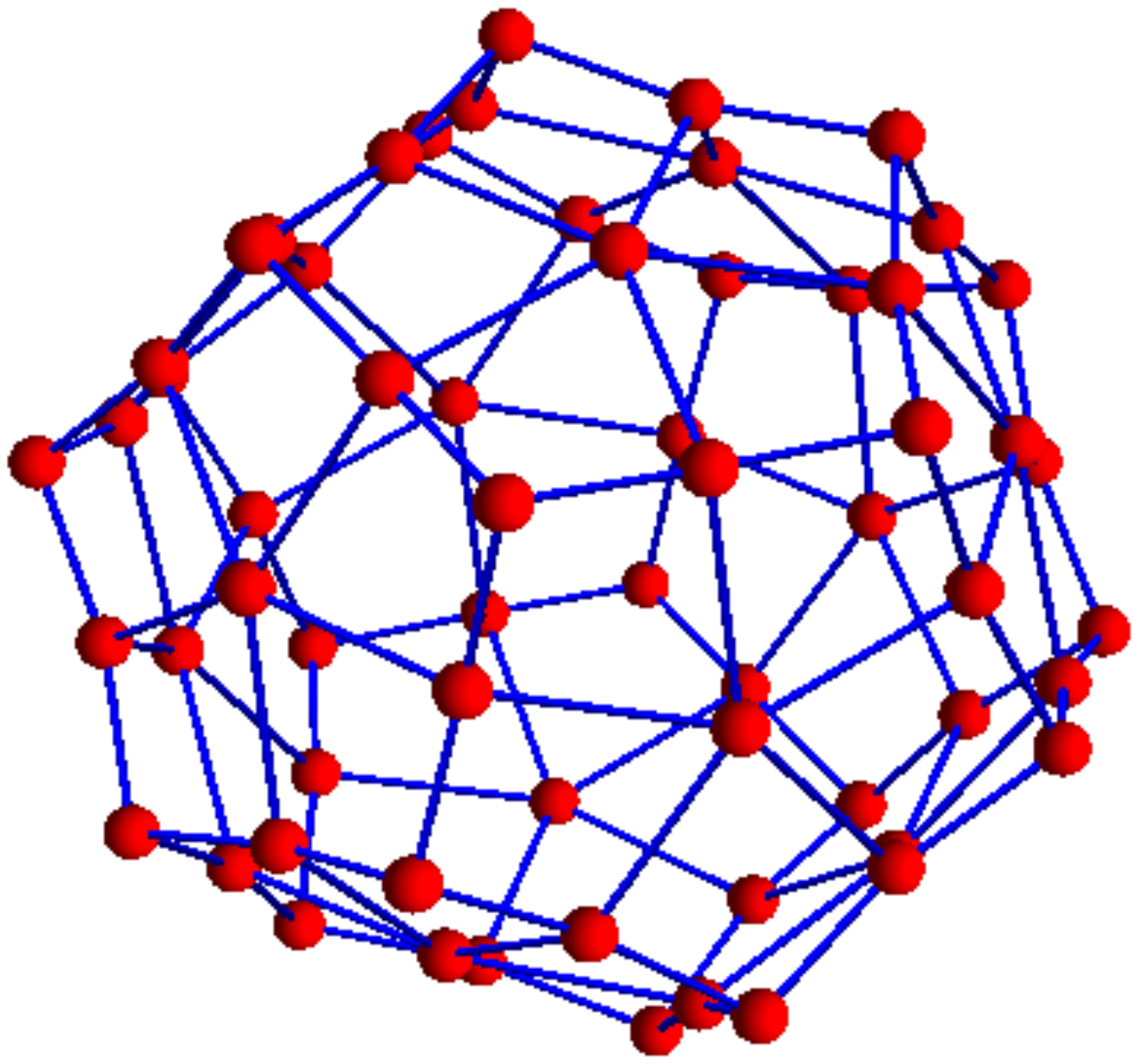}}  &
\scalebox{0.08}{\includegraphics{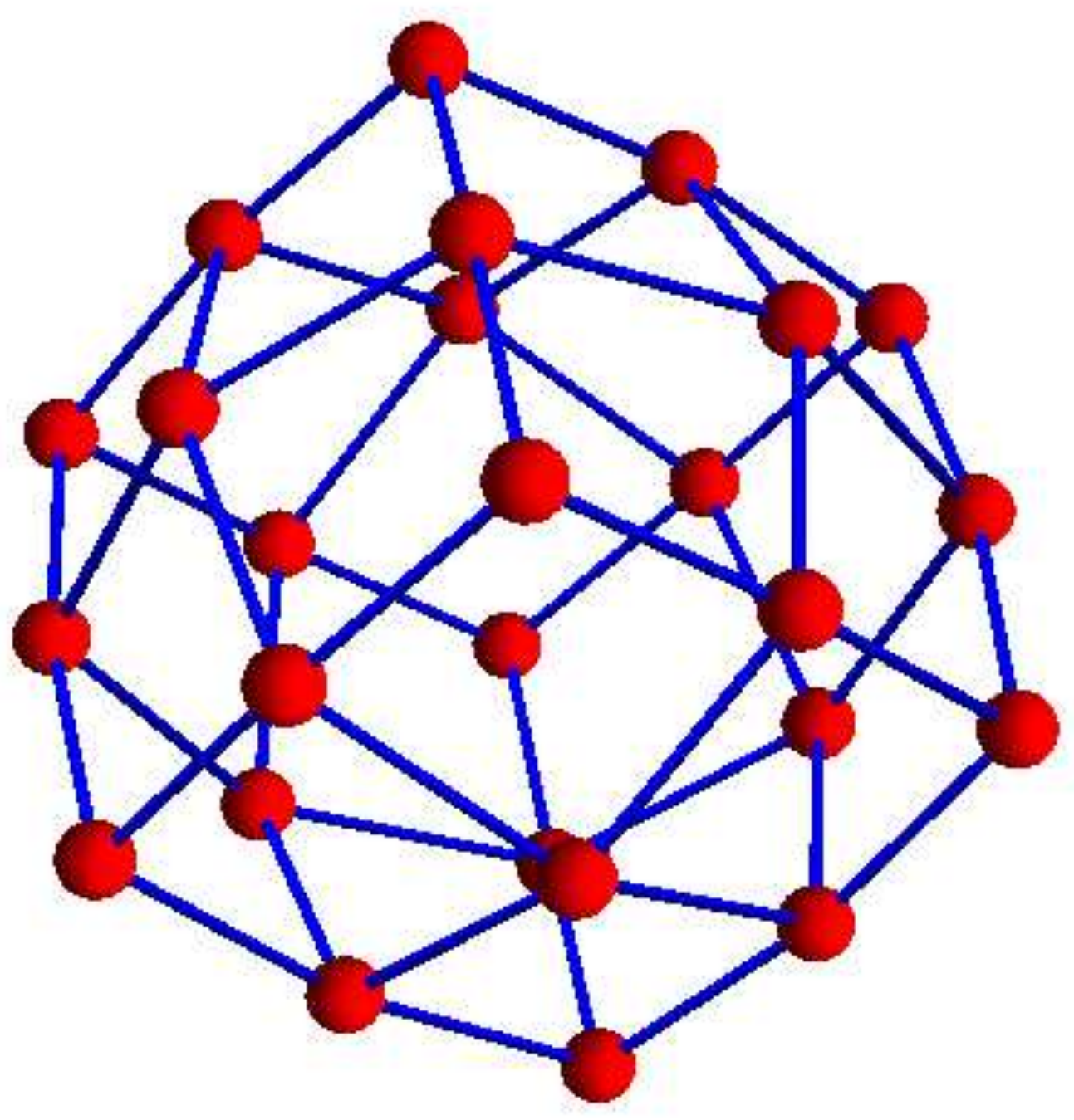}} \\
(1,-28)                                                       &
(1,-58)                                                       &
(1,-22)                                                      \\
\scalebox{0.08}{\includegraphics{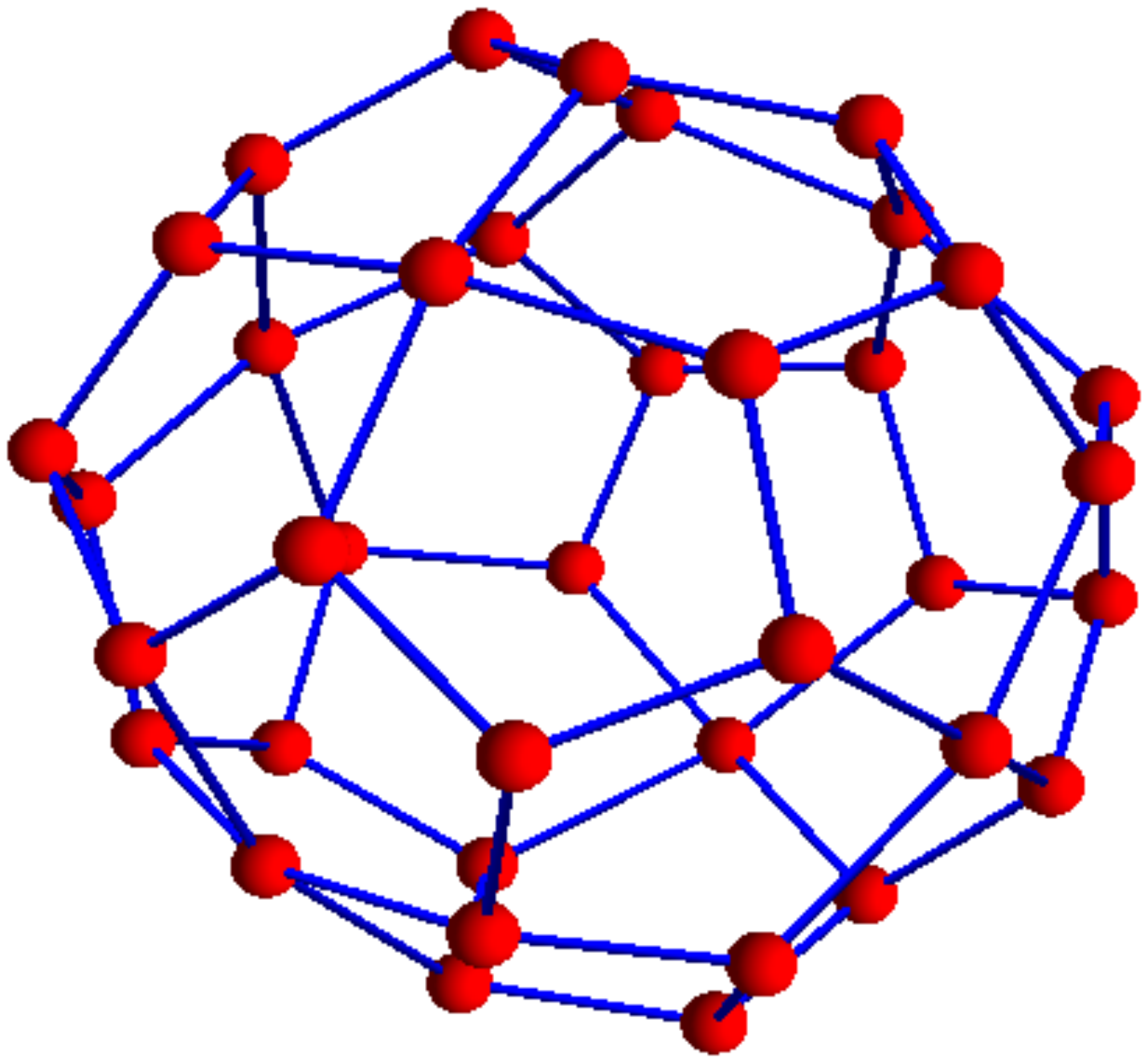}}  &
\scalebox{0.08}{\includegraphics{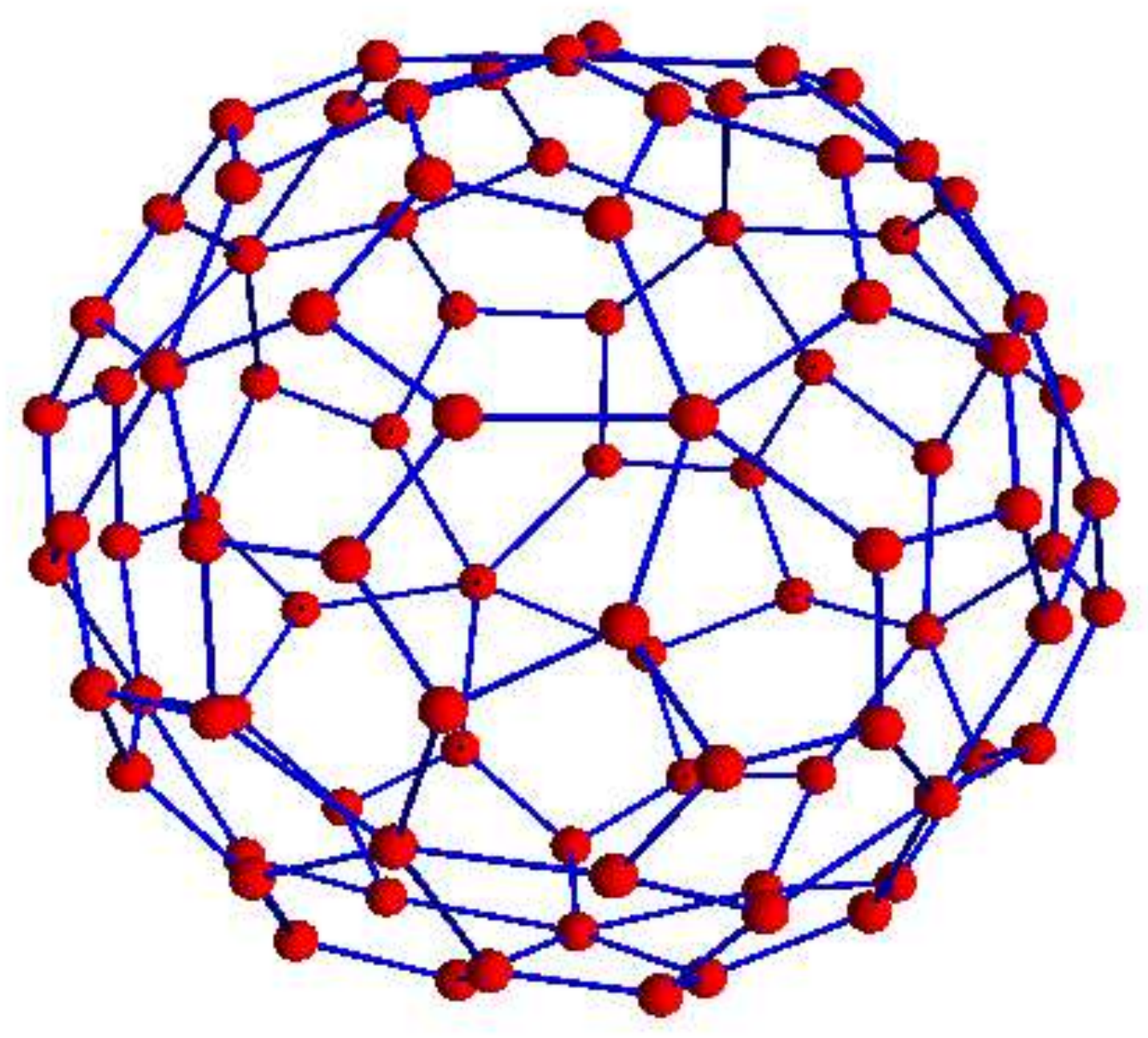}}  &
\scalebox{0.08}{\includegraphics{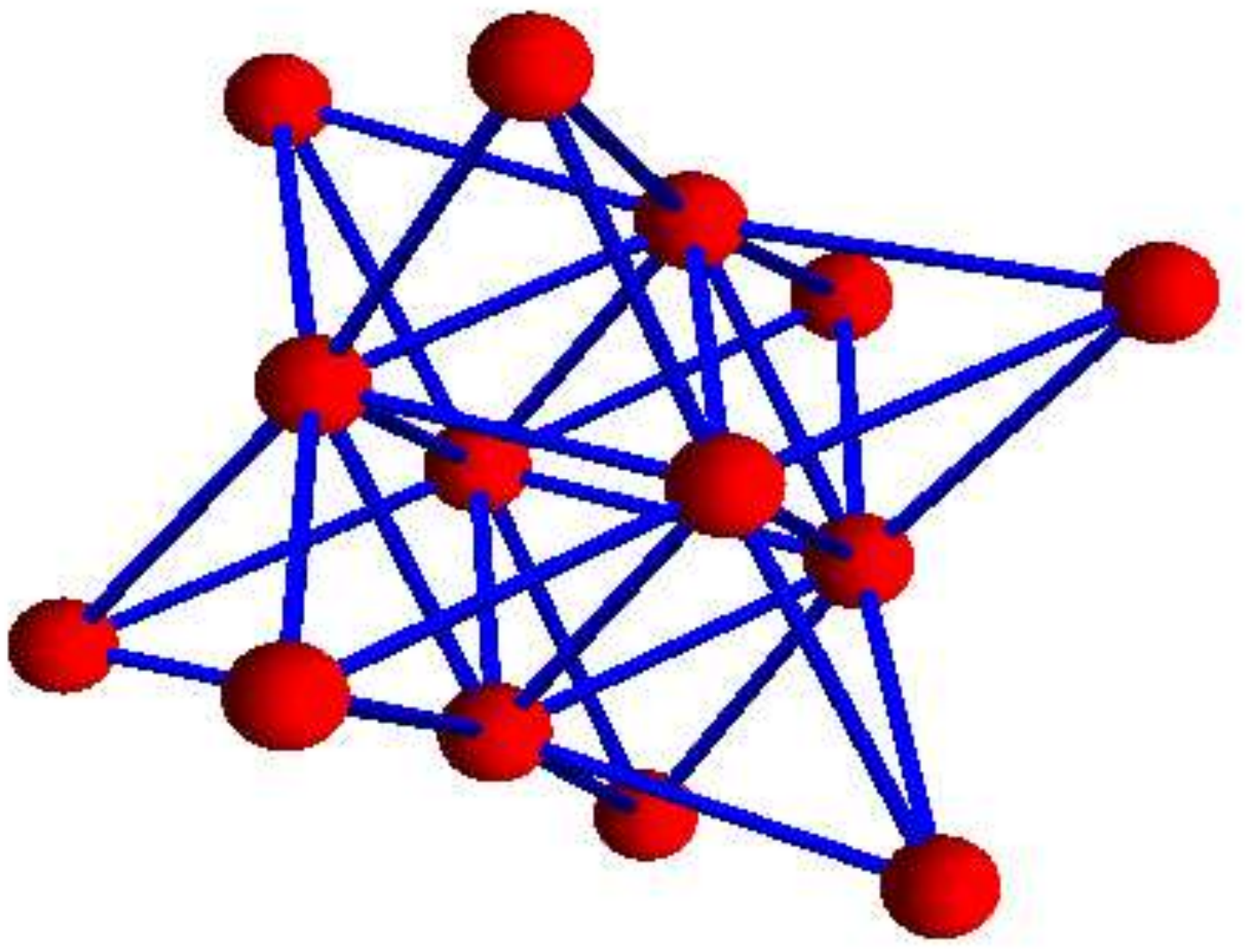}} \\
(1,-22)                                                       &
(1,-58)                                                       &
(3,2)                                                        \\
\scalebox{0.08}{\includegraphics{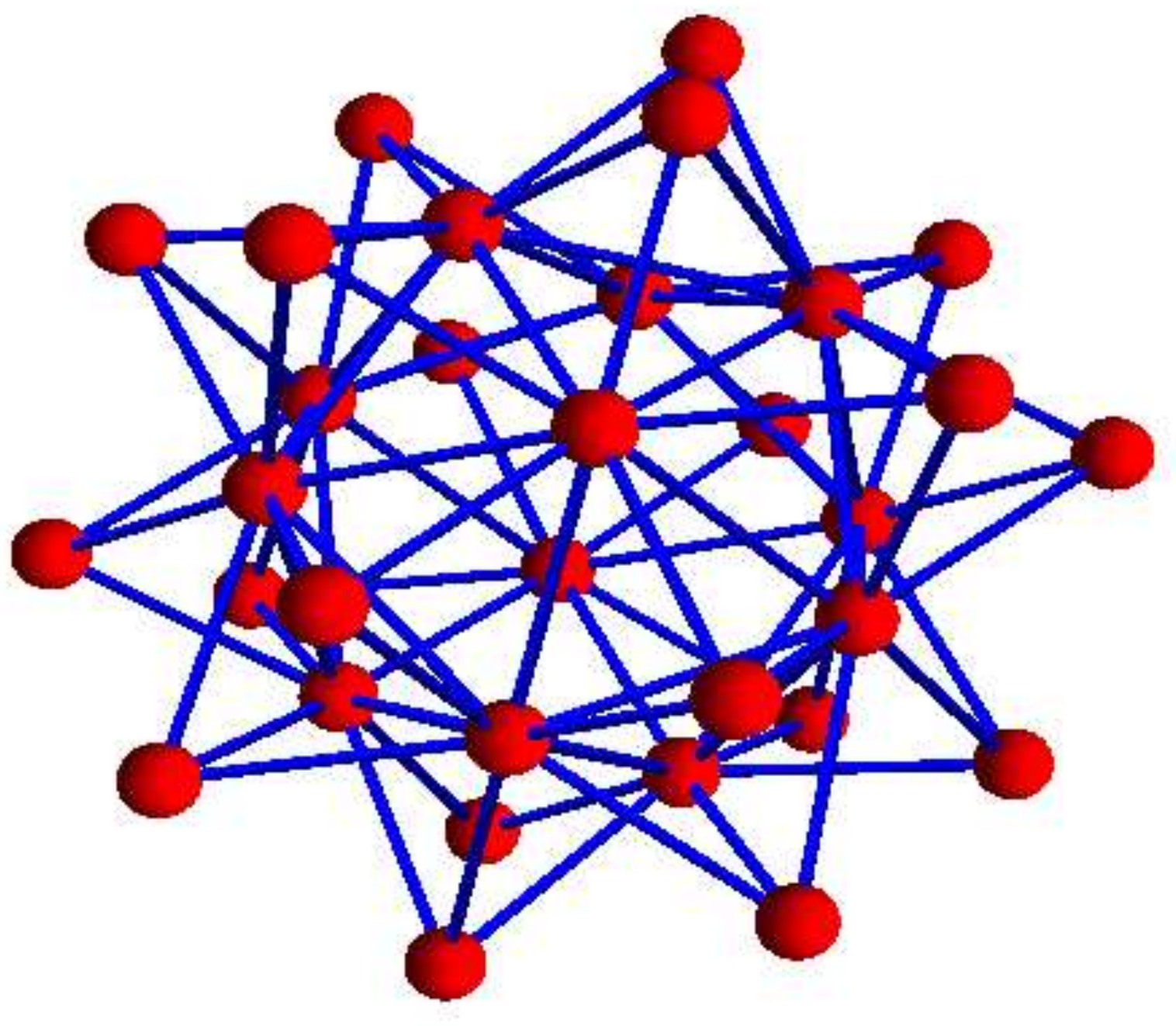}}  &
\scalebox{0.08}{\includegraphics{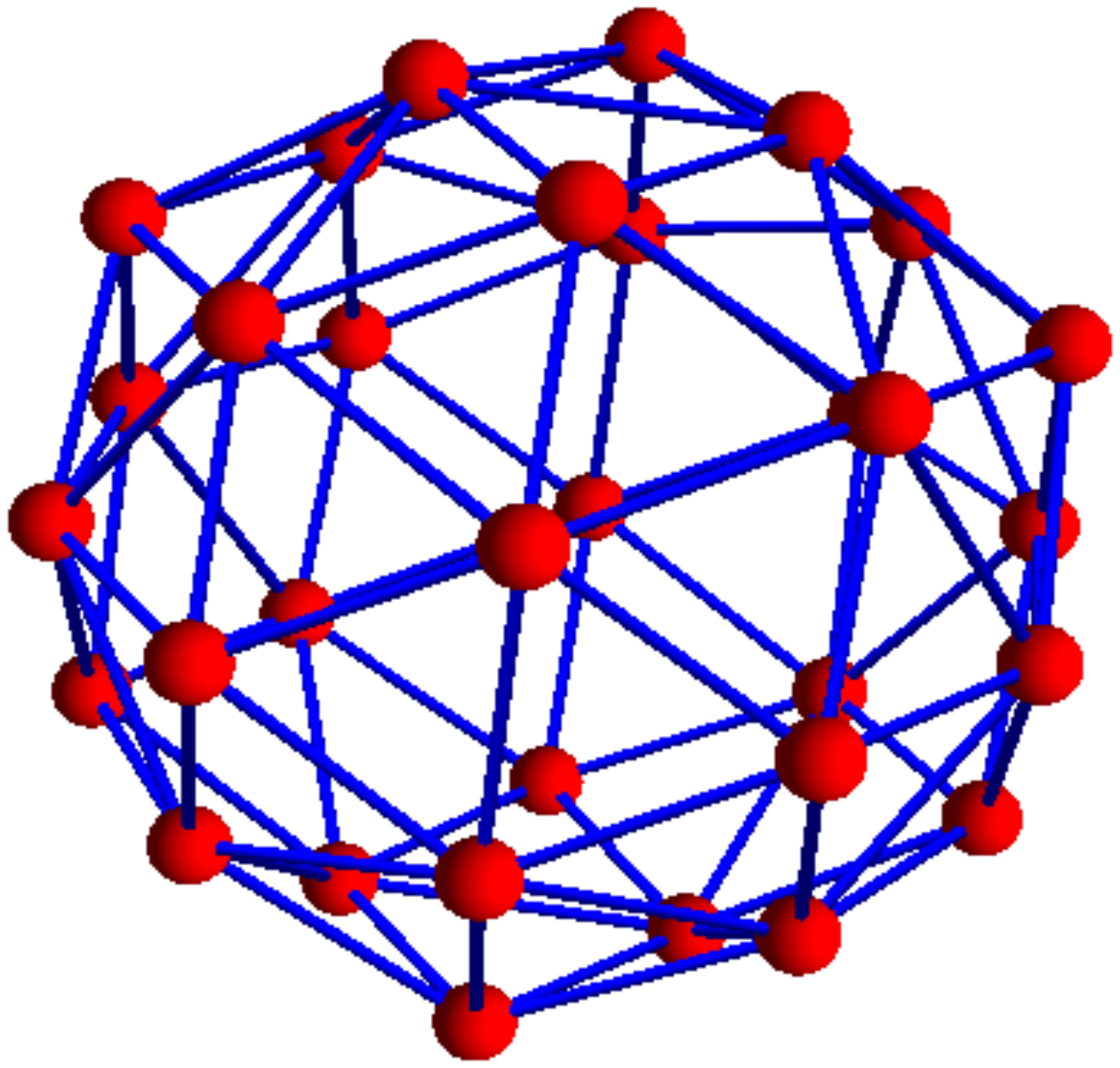}}  &
\scalebox{0.08}{\includegraphics{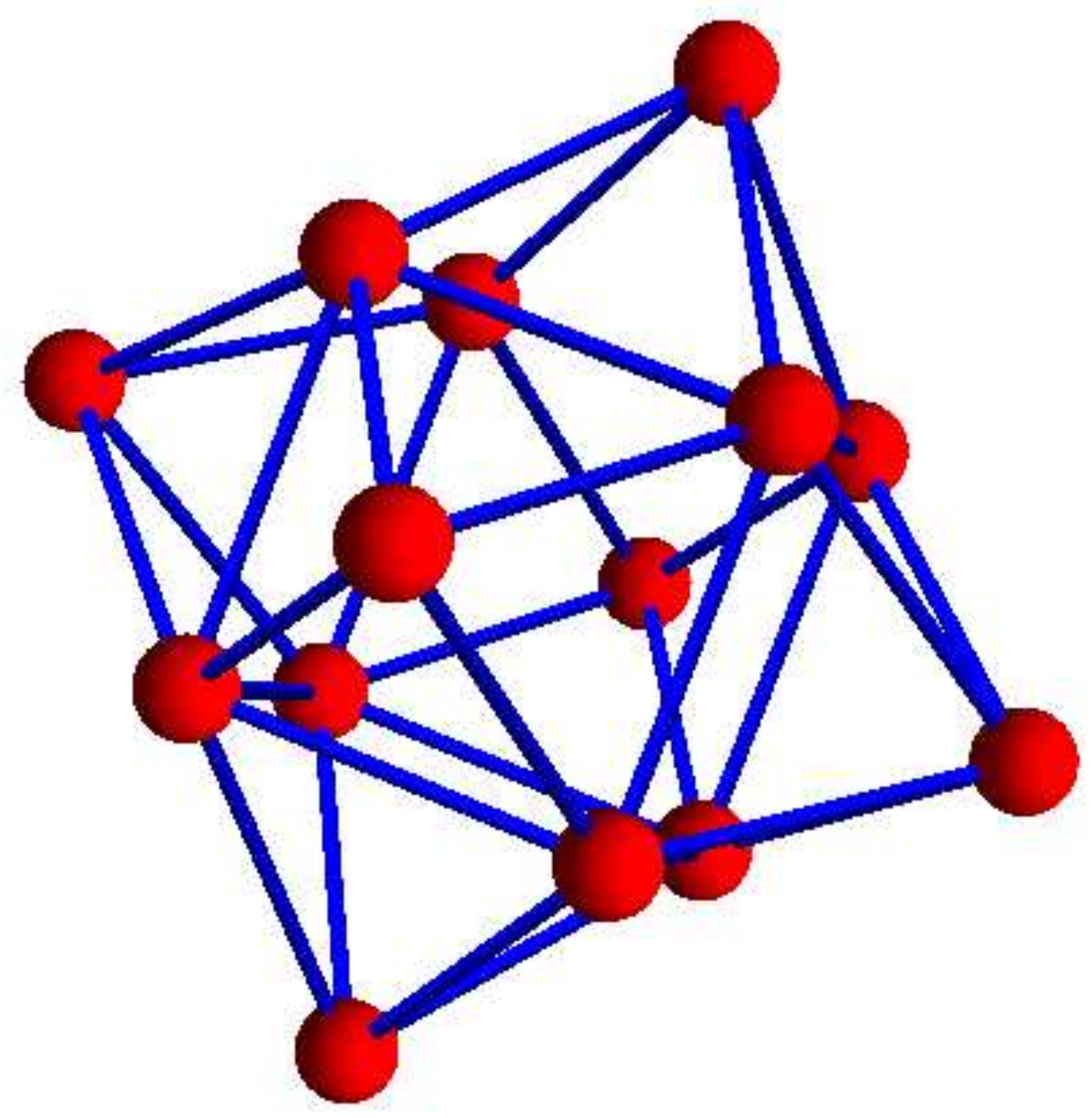}} \\
(3,2)                                                         &
(2,2)                                                         &
(2,2)                                                        \\
\scalebox{0.08}{\includegraphics{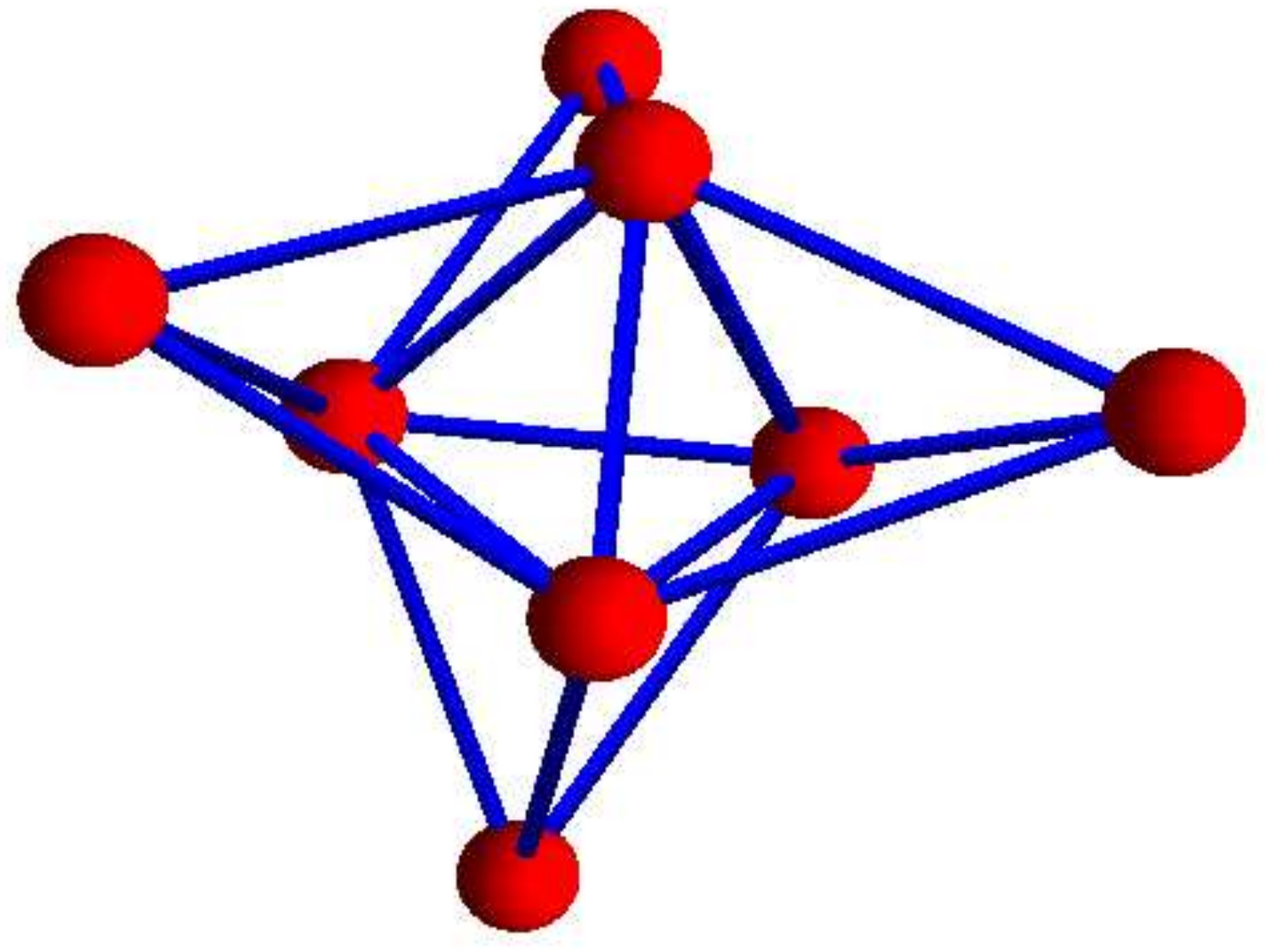}}  & &  \\
(3,1)                                                        & &  
\end{tabular}
}}
\caption{
{\bf Example 2.2.}
Archimedean graphs and their dual Catalan graphs. Below each graph,
the dimension and Euler characteristic is displayed. Only for $(d,\chi)=(2,2)$,
we have a two dimensional graph without boundary. All of these graphs are 
polyhedra in a graph theoretical sense: a stellation or snubbing process 
produces from them a two dimensional graph without boundary. }
\label{archimedean}
\end{figure}

For two-dimensional graphs $G$, positive curvature means that the 
degree is $4$ or $5$. Because $\sum_{g \in G} K(g) = \chi(G) \geq 0$,
there are only finitely many positive curvature graphs. 
We have a list in two dimensions and know therefore - the proof is left out here - that only 
finitely many $d$-dimensional graphs exist for which all sectional curvatures
- curvatures of two dimensional subgraphs -  are positive.
For two dimensional graphs, the genus $g$ defined by $\chi(G)=2-2g$ is the number of holes in the graph. 
For a two-dimensional graph $G$, the values of $\chi(G)$ and $v$ 
determine the edge and face cardinalities by $3f=2e$ and $v-e+f=\chi(G)$. 

\begin{figure}
\parbox{15cm}{
\parbox{7.2cm}{
\begin{tabular}{lll}
\parbox{4cm}{octahedron  $6 \cdot (1/3)$}      &\scalebox{0.08}{\includegraphics{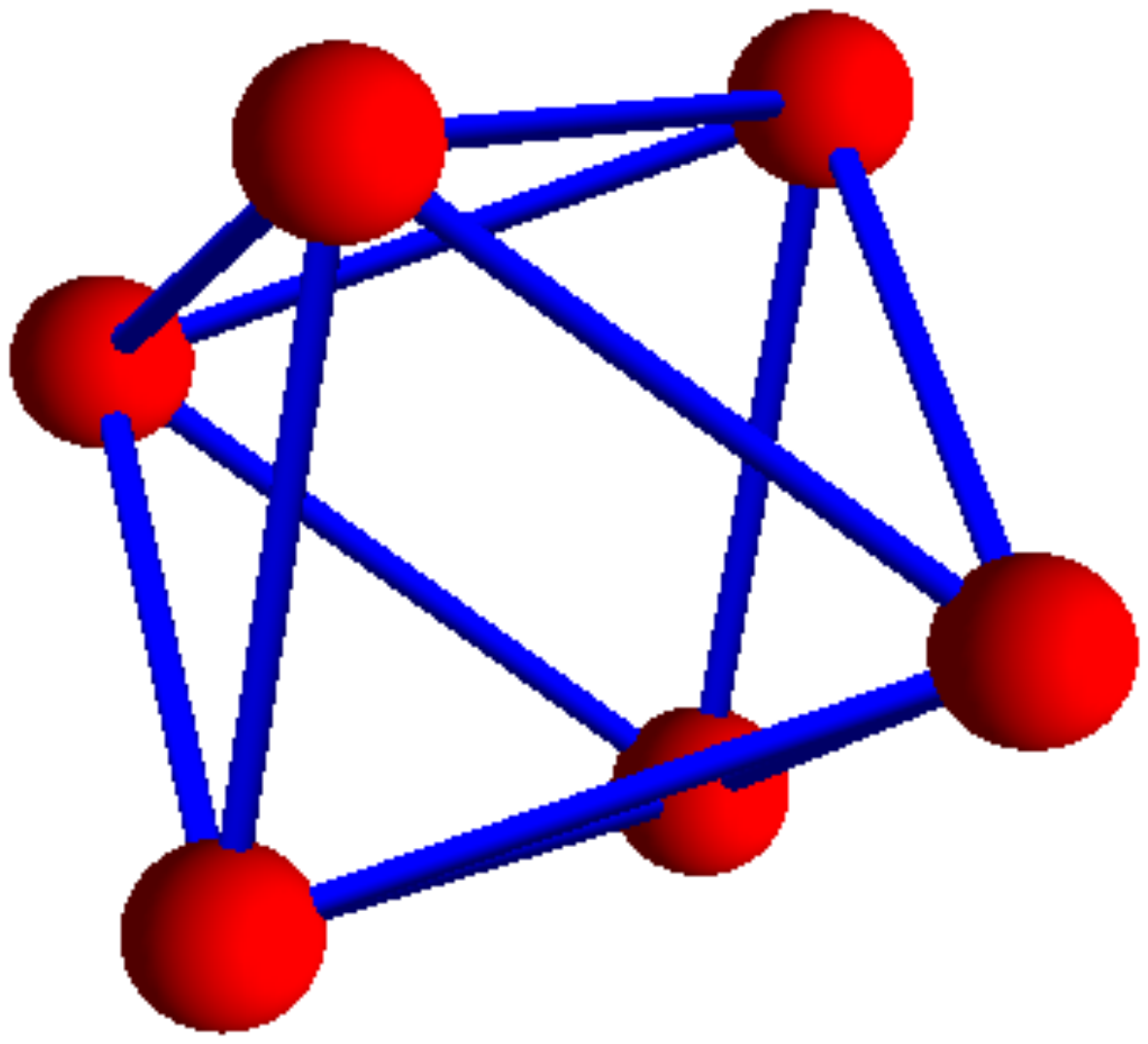}} \\
\parbox{4cm}{10 hedron   $5 \cdot (1/3)+2 \cdot (1/6)$}  &\scalebox{0.08}{\includegraphics{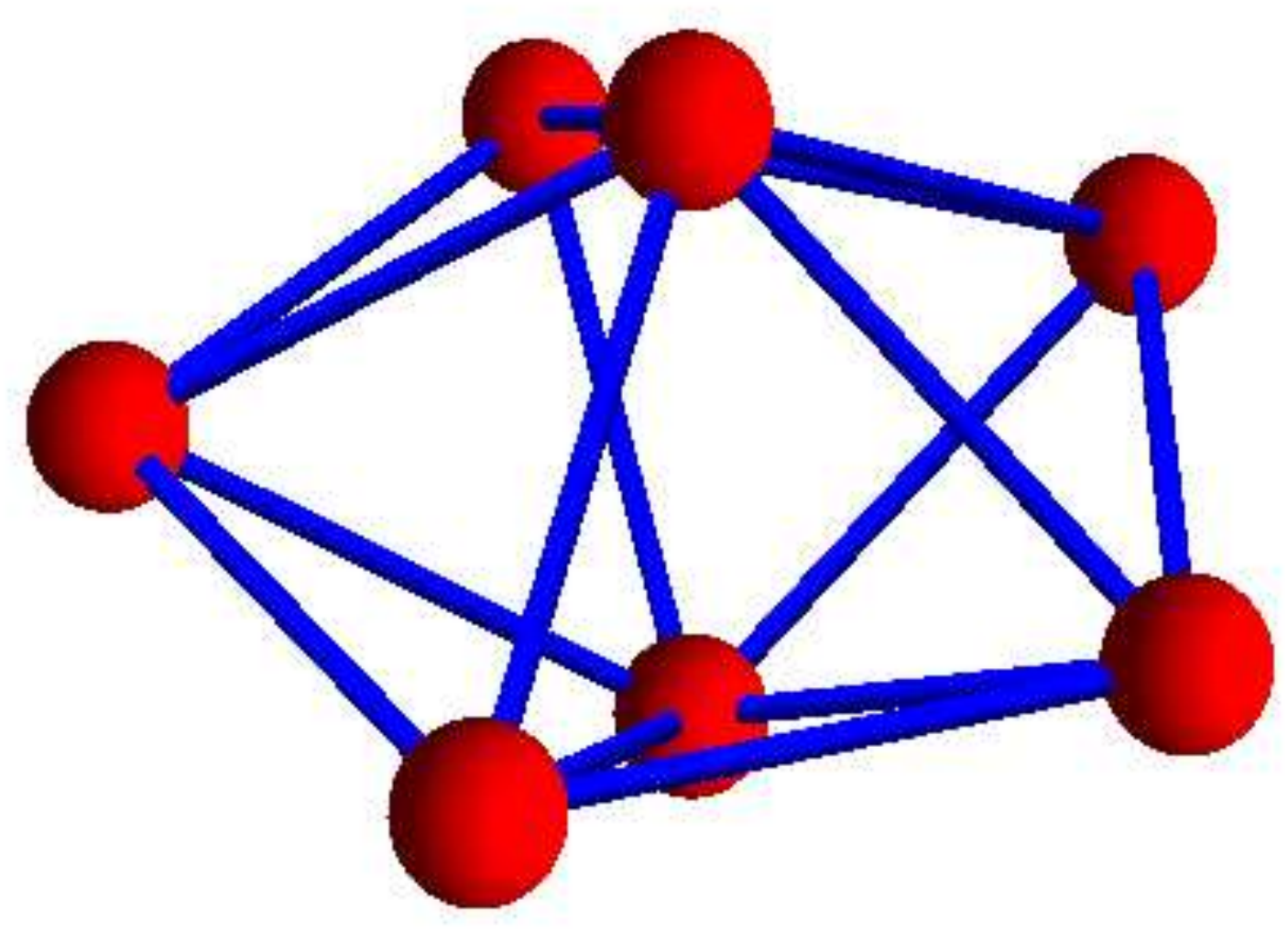}} \\
\parbox{4cm}{12 hedron   $4 \cdot (1/3)+4 \cdot (1/6)$}  &\scalebox{0.08}{\includegraphics{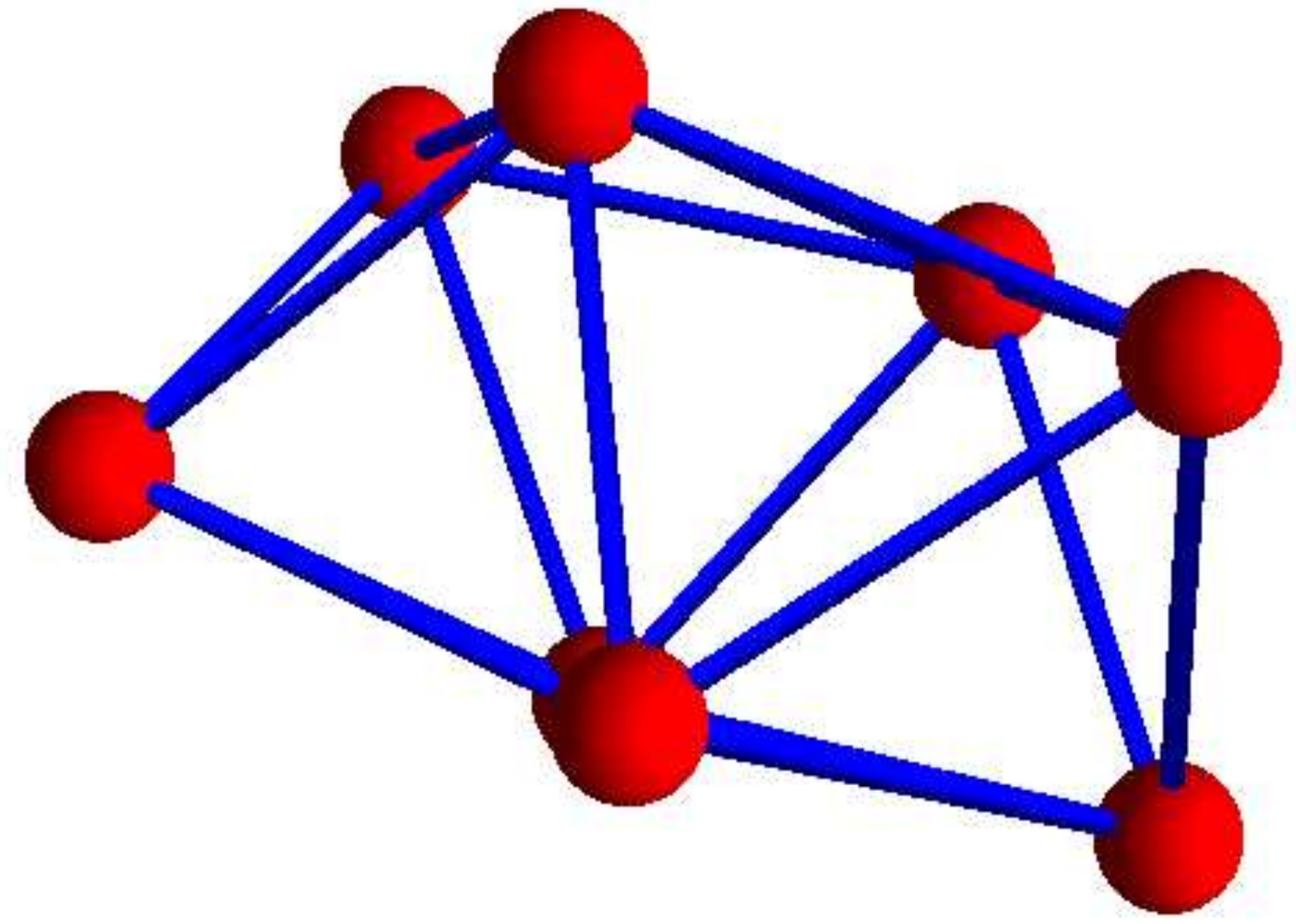}} \\
\end{tabular}
}
\parbox{7.2cm}{
\begin{tabular}{lll}
\parbox{4cm}{14 hedron     $3 \cdot (1/3)+6 \cdot (1/6)$} &\scalebox{0.08}{\includegraphics{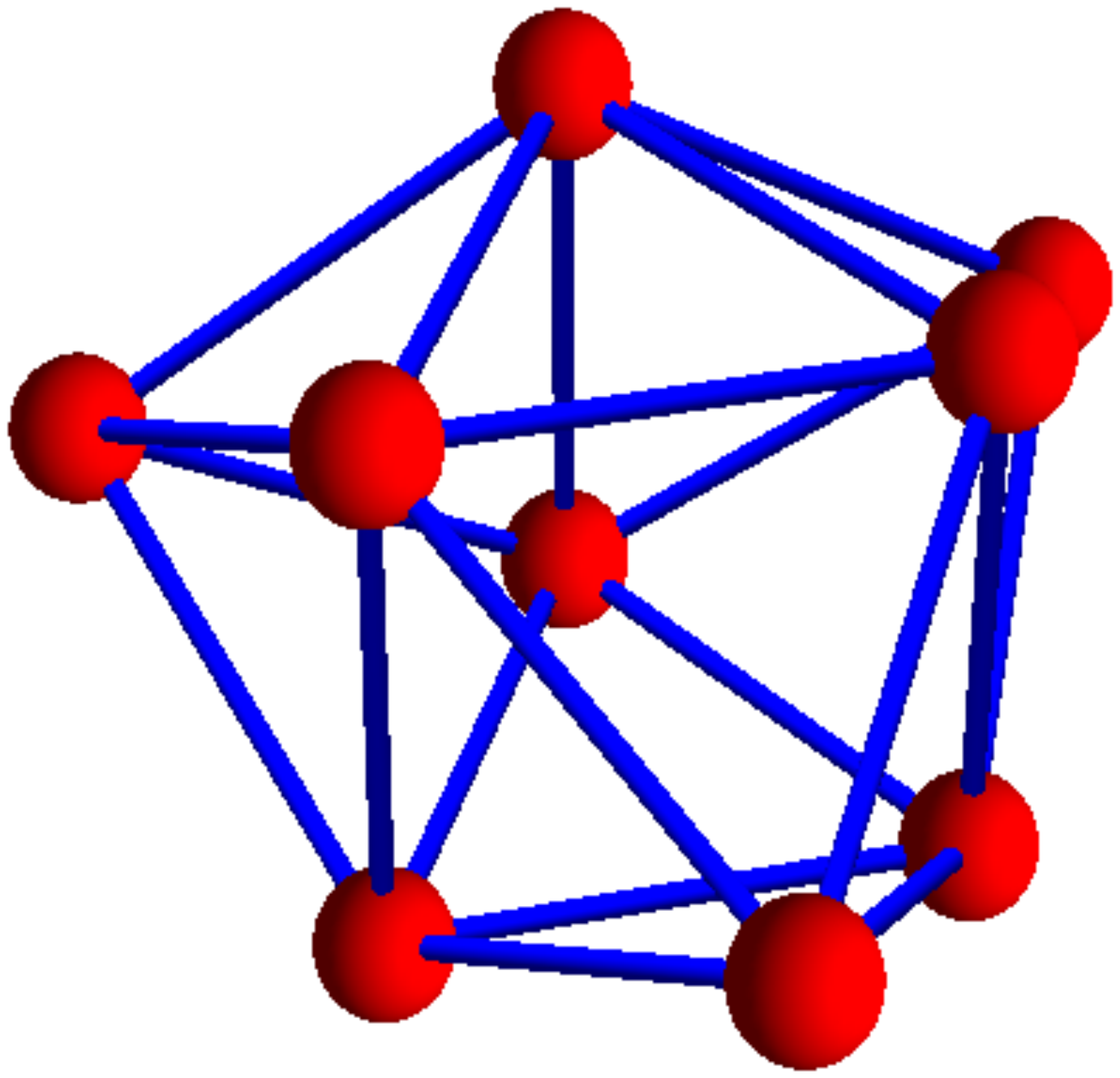}} \\
\parbox{4cm}{16 hedron     $2 \cdot (1/3)+8 \cdot (1/6)$} &\scalebox{0.08}{\includegraphics{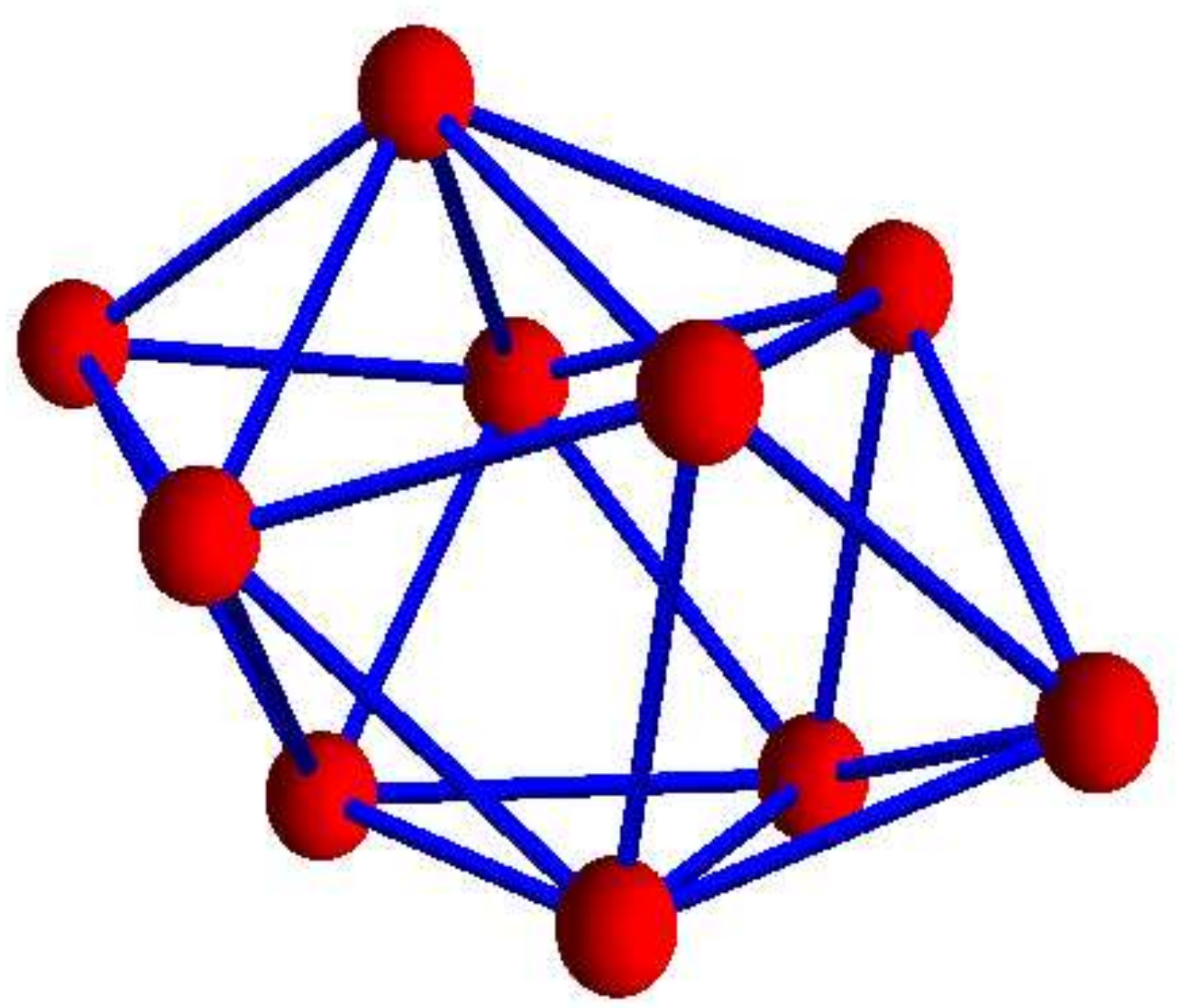}} \\
\parbox{4cm}{icosahedron   $12 \cdot (1/6)$}    &\scalebox{0.08}{\includegraphics{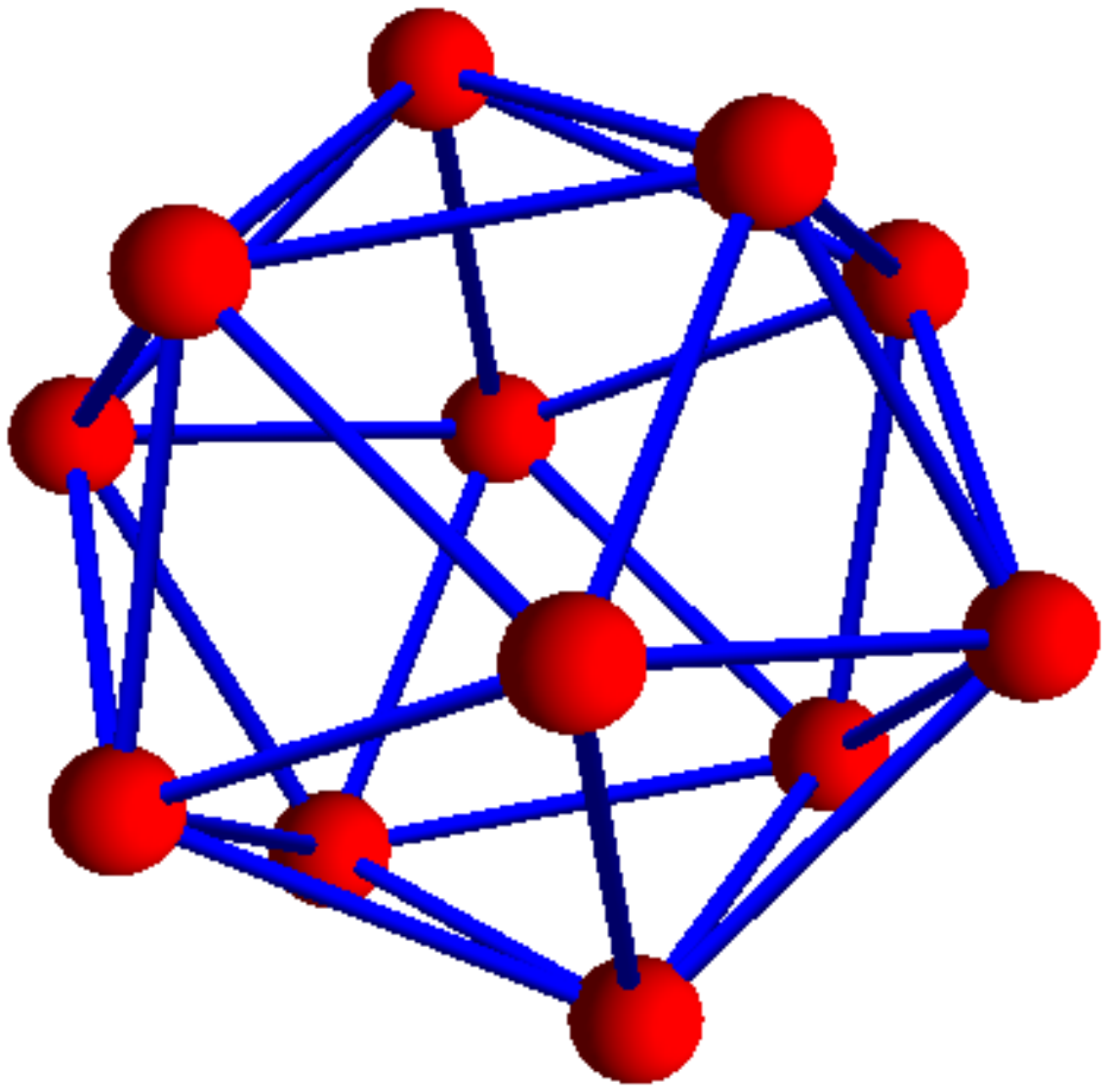}} \\
\end{tabular}
}
}
\caption{
{\bf Example 2.3.}
The six 2-dimensional graphs with strictly positive curvature are determined by 
the vertex cardinality $v \in \{6, \dots, 12 \; \}$. Their edge and face cardinalities
$e=(v-2) 3, f=(v-2) 2$ are determined by hyperrelation and $\chi(G) = 2$. All these graphs have diameters
satisfying the Bonnet-Schoenberg-Myers bound ${\bf diam}(G) \leq 4/\sqrt{K}$, where $K$ is the minimal curvature.
One can deduce from this that for any $d$-dimensional graph with positive sectional curvature,
the same diameter bound holds. }
\end{figure}

\subsection{Three dimensional graphs} 

The Euler curvature form is identically zero for three-dimensional graphs. As a consequence of 
Gauss-Bonnet-Chern, the Euler characteristic of a three dimensional graph is zero. This is a discrete version
of Poincar\`e's result that odd dimensional compact manifolds have zero Euler characteristic.

\begin{coro}
The Euler curvature form is identically zero for three dimensional graphs.
\end{coro}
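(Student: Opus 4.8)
The plan is to verify the vanishing pointwise, directly from the definition of the Euler curvature form, rather than merely deducing $\sum_{p} K(p) = \chi(G) = 0$ from Theorem~\ref{gbc}. First I would specialize the defining formula to $d=3$. Since $e_3 = (1+(-1)^3)/2 = 0$, the constant term drops out, so I only need to track which sphere quantities $V_i(p)$ actually appear.

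The key observation is an index count. For $d>2$ the curvature reads $K(p) = e_d + V_1 a_1 + \cdots + V_{d-3}a_{d-3} + V_{d-2}\bigl(a_{d-2} + \tfrac{2}{d}a_{d-1}\bigr)$. When $d=3$ we have $d-3 = 0$, so the interior sum $V_1 a_1 + \cdots + V_{d-3}a_{d-3}$ is empty, and the only surviving term is the last one, with $d-2 = 1$, namely $V_1\bigl(a_1 + \tfrac{2}{3}a_2\bigr)$. Thus for a three-dimensional graph the curvature depends on a single sphere quantity: the number of edges $V_1 = E$ of the (two-dimensional) unit sphere $S_1(p)$.

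It then remains to evaluate the coefficient. Using $a_1 = -1/6$ and $a_2 = 1/4$ from the definition $a_n = (\tfrac12 - \tfrac1{n+2})(-1)^n$, I compute $a_1 + \tfrac{2}{3}a_2 = -\tfrac16 + \tfrac{2}{3}\cdot\tfrac14 = -\tfrac16 + \tfrac16 = 0$, so $K(p) = 0$ at every vertex $p$, regardless of the value of $E$. Conceptually, this cancellation is exactly the hyper relation built into the definition: had we kept the symmetric form $e_3 + a_1 V_1 + a_2 V_2 = -V_1/6 + V_2/4$, the relation $3V_2 = 2V_1$ (the ``$3f = 2e$'' identity obtained from Lemma~\ref{hyperrelations} applied to the closed surface $S_1(p)$) would force the same vanishing.

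I do not expect any genuine obstacle here: the statement is pure bookkeeping once the definition is unwound. The only points needing care are confirming the edge case of the index range (that no $V_i$ with $i \geq 2$ enters when $d=3$) and the one-line arithmetic identity $a_1 + \tfrac{2}{3}a_2 = 0$. Pointwise vanishing is strictly stronger than, and immediately re-implies, the fact that $\chi(G) = 0$ for three-dimensional graphs via Theorem~\ref{gbc}.
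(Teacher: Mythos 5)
Your proposal is correct, and it takes a genuinely more direct route than the paper. You simply evaluate the definition of $K$ at $d=3$: the constant $e_3$ vanishes, the interior sum $V_1a_1+\cdots+V_{d-3}a_{d-3}$ is empty, and the lone surviving coefficient $a_1+\tfrac{2}{3}a_2=-\tfrac16+\tfrac16=0$ kills the $V_1=E$ term. This needs no input beyond the definition and one line of arithmetic, and it matches the entry ``$-E/6+(2/3)E/4 = 0$'' in the paper's own table. The paper instead re-runs the entire Gauss--Bonnet machinery in the special case $d=3$: it invokes the transfer relations $\sum V/2=e$, $\sum E/3=f$, $\sum F/4=c$, the sphere Euler relation $V-E+F=2$, and arrives at $\sum(-E/6+F/4)=\chi$, only then applying the hyper relation $3F=2E$ on each unit sphere to see that the summand $-E/6+F/4$ vanishes pointwise. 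What the paper's longer route buys is a worked illustration of the proof of Theorem~\ref{gbc} in the first nontrivial odd dimension (the section explicitly announces this intent); what your route buys is economy and the observation --- which you state correctly --- that the cancellation is precisely the hyper relation $3V_2=2V_1$ on the two-dimensional sphere $S_1(p)$, already baked into the coefficient $a_{d-2}+\tfrac{2}{d}a_{d-1}$ of the definition. Your index-range check (that no $V_i$ with $i\geq 2$ enters for $d=3$) is the one edge case worth being careful about, and you handled it.
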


Even so, this is a special case of the theorem, we look at the calculation in three dimensions: 

\begin{proof}
If $c$ the number of three dimensional chambers of the graph, the Euler characteristic of a three dimensional 
graph is 
$$ \chi(G) = v-e+f-c  \; .  $$  
The hyper relations are $4c = 2f$ and $3F = 2E$,
where $V(p),E(p),F(p)$ the number of vertices, edges and faces on the sphere $S_1(p)$. 
The Euler relation on each sphere is $V-E+F=2$.  The transfer relations
$$ \sum \frac{V(p)}{2} = e, \sum \frac{E(p)}{3} = f, \sum \frac{F(p)}{4} = c $$ 
give from $v_e+f-c=\chi$ the formula
$$ \sum 1-\sum \frac{V}{2}+\sum \frac{E}{3}-\sum \frac{F}{4} = \chi \; .$$
Adding this together with $V=E+F=2$ provides
$$ \sum \frac{V}{2} - \sum \frac{E}{2} + \sum \frac{F}{2}  =  \sum 1 $$
and so
$$ \sum (-\frac{E}{6}  + \frac{F}{4})  = \chi  \; . $$
The hyper surface relation $3F=2E$ shows that this is zero.
\end{proof}

\noindent
{\bf Example 3.1.} For a octahedral tessellation of a 3D torus, we confirm zero curvature everywhere. 
The Euler characteristic is $\chi = 1-3+3-1=0$. \\

{\bf Example 3.2.} For a 3-dimensional tetrahedron $K(4)$, we have $\chi=5-10+10-5=0$ and the curvature 
is constant 0. Note that this case is not covered by the theorem because the unit sphere of any point 
of $K(4)$ is $K(3)$ which is a two-dimensional graph with Euler 
characteristic $1$ which is different from the required$2$. \\

{\bf Example 3.3.} A 4-cross polytope is a bi-pyramid constructed from an octahedron. It is also called $16-cell$. 
Each unit sphere has 6 points and is a regular octahedron with $E=12$ edges and $F=8$ faces.
For a realization of the 4-cross polytope in ${\bf R}^4$, two points are connected if they have distance 
$\sqrt{2}$. The unit sphere at a point $[1,0,0,0]$ consists of the 6 points
$[0,\pm 1,0,0],[0,0,\pm 1,0],[0,0,0,\pm 1]$, which form an 
octahedron with $V=6,E=12,F=8$.  \\

\begin{center}
\begin{tabular}{lll}
vertices  &   $(\pm 1,0,0,0)$                                              & $v=8=2 \cdot 4$   \\
edges     &   $(\pm 1,0,0),(0,\pm 1,0,0)$                                  & $e=24=4 \cdot 6$  \\
faces     &   $(\pm 1,0,0),(0,\pm 1,0,0),(0,0,\pm 1,0)$                    & $f=32=8 \cdot 4$  \\
chambers  &   $(\pm 1,0,0),(0,\pm 1,0,0),(0,0,\pm 1,0),(0,0,0,\pm 1)$      & $c=16=16 \cdot 1$ \\
\end{tabular}
\end{center}

We check in all these examples that the 
curvature form is $K=-\frac{E}{6}  - \frac{F}{4} = -2+2=0$. \\

\begin{figure}
\scalebox{0.25}{\includegraphics{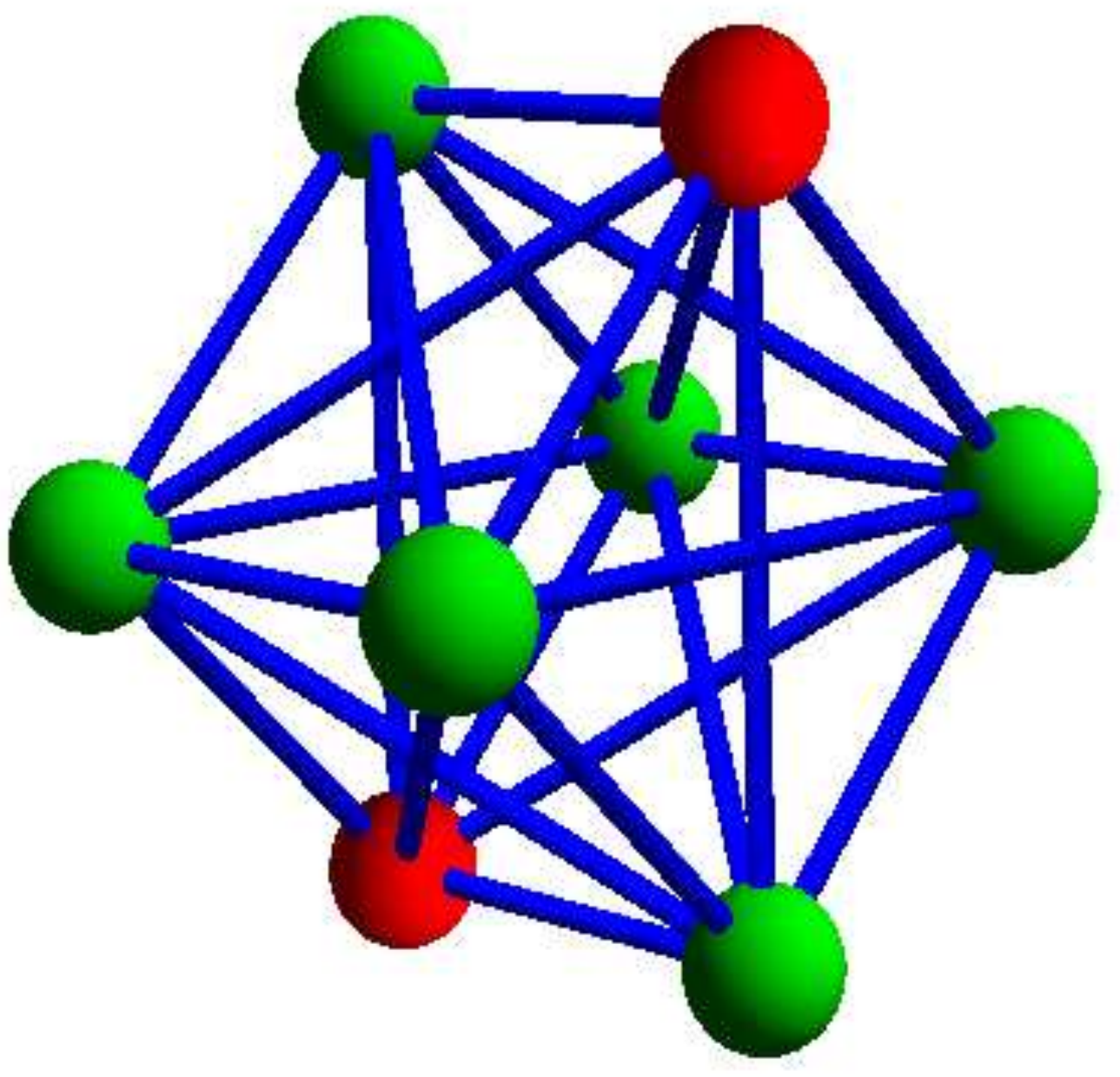}}
\scalebox{0.25}{\includegraphics{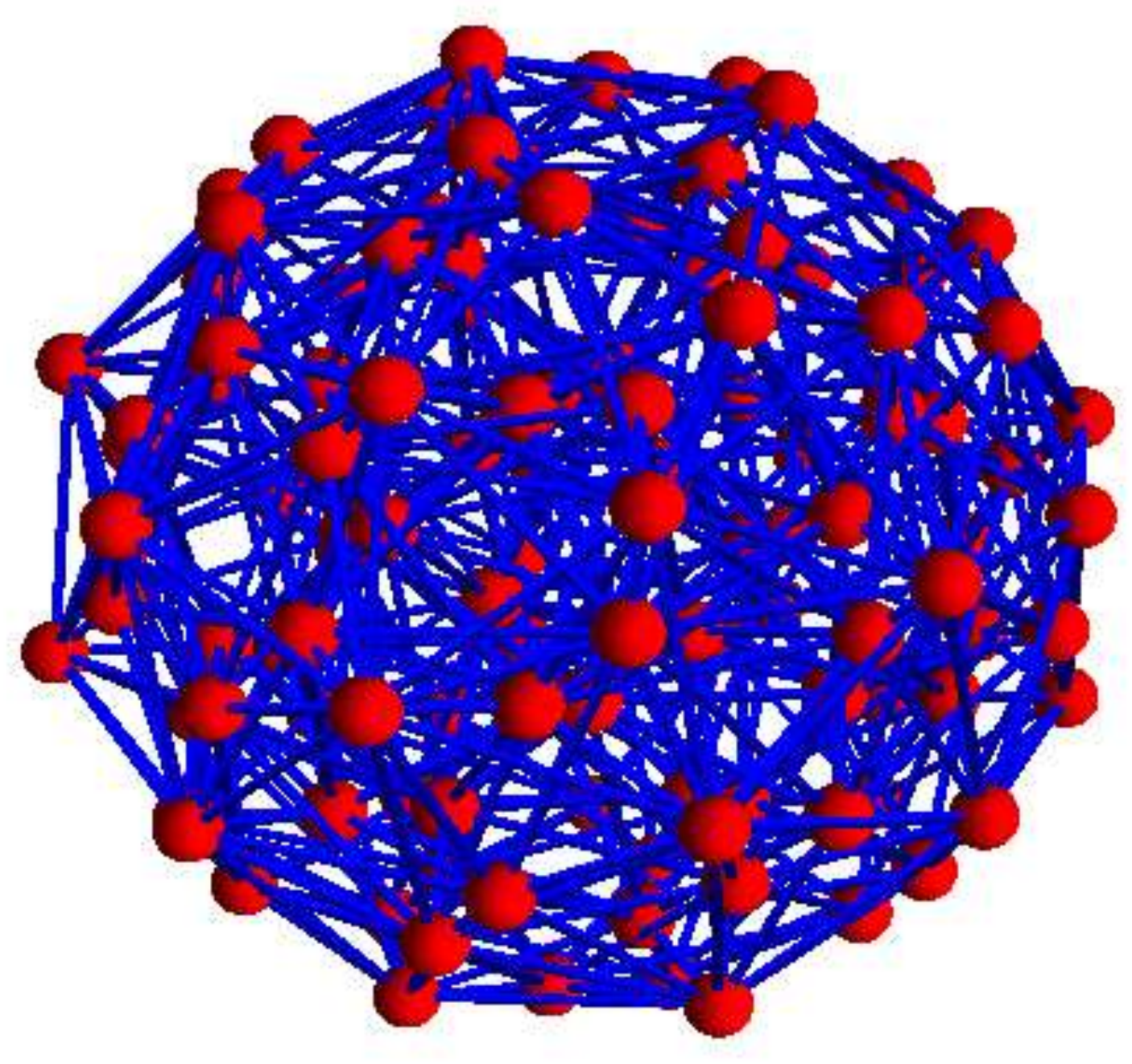}}
\caption{
The 4-cross polytope is the smallest three-dimensional graph without boundary.
It has the simplex data $v=8,e=24,f=32,c=16$.
In the left picture, the unit sphere of a point is colored differently.
Each sphere $S_1(p)$ is an octahedron. The right figure shows
the {\bf 600 cell} which is the only other "regular polytope" which is a three dimensional graph. 
Also here, the unit sphere of a single point is colored differently. Each sphere $S_1(p)$ 
is an icosahedron. 
}
\label{4crosspolytope}
\end{figure}

{\bf Example 3.4.} The {\bf 600 cell} is a regular polytope with 120 vertices. 
We see from the complete list of all 6 regular polytopes in $R^4$ that besides the just discussed 16-cell, that the 
600 cell is the only remaining three dimensional graph which corresponds to a regular convex 
polytope in 4 dimensions. Its unit sphere is a regular icosahedron with $E=30$ edges and $F=20$ faces. \\

\begin{figure}
\parbox{15.0cm}{
\parbox{7.3cm}{
\begin{tabular}{llll} 
\parbox{2.3cm}{5 cell of dim  4 with  tetrahedron as sphere} & \scalebox{0.1}{\includegraphics{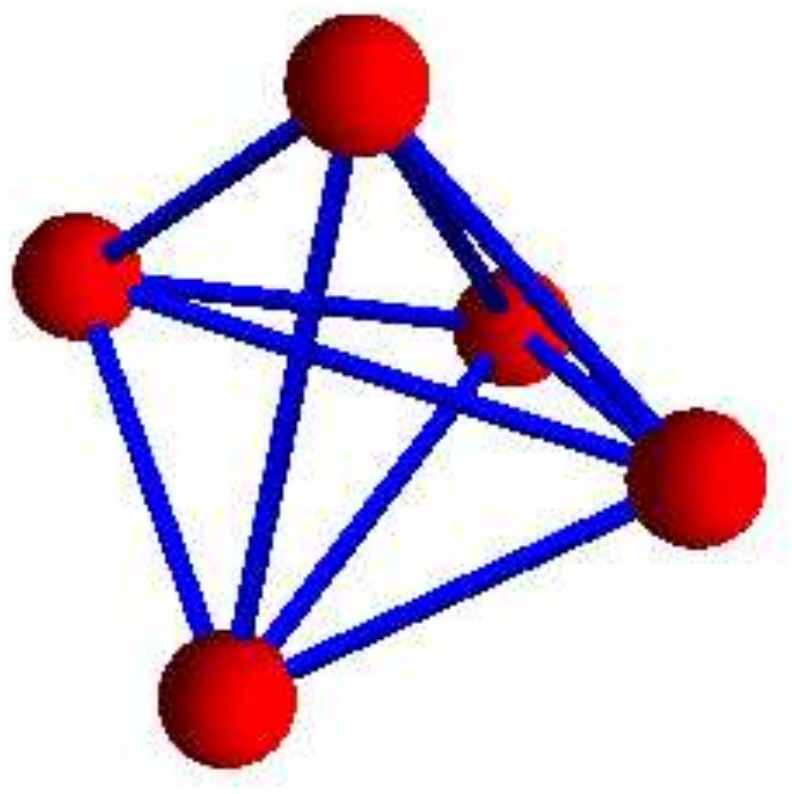}}  \\
\parbox{2.3cm}{8 cell of dim  1 with  4 points as sphere}    & \scalebox{0.1}{\includegraphics{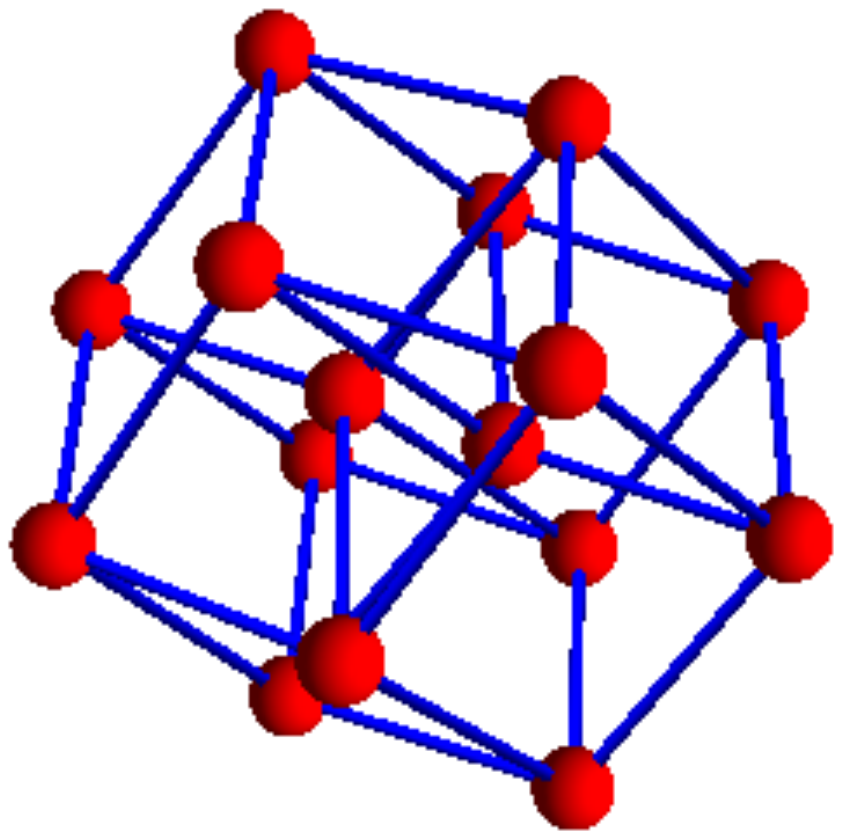}}  \\
\parbox{2.3cm}{600 cell of dim 3 and icosahedron as unit sphere} & \scalebox{0.10}{\includegraphics{figures/600cell1.pdf}}  \\
\end{tabular}
}
\parbox{7.3cm}{
\begin{tabular}{llll} 
\parbox{2.3cm}{16 cell of dim 3 with  octahedron as sphere}  & \scalebox{0.1}{\includegraphics{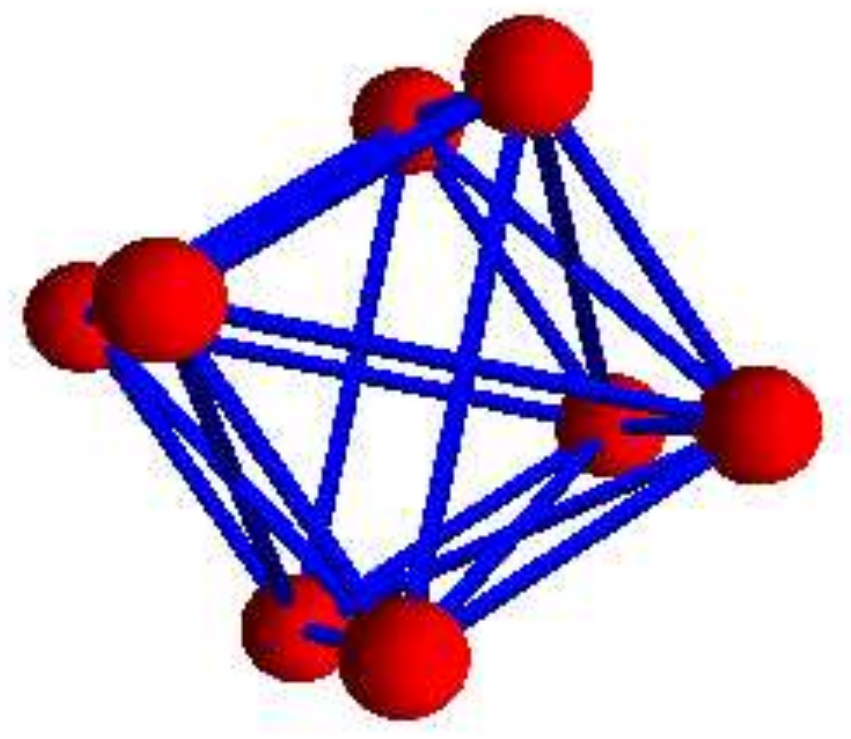}} \\
\parbox{2.3cm}{24 cell of dim 2 and cube as unit sphere} & \scalebox{0.10}{\includegraphics{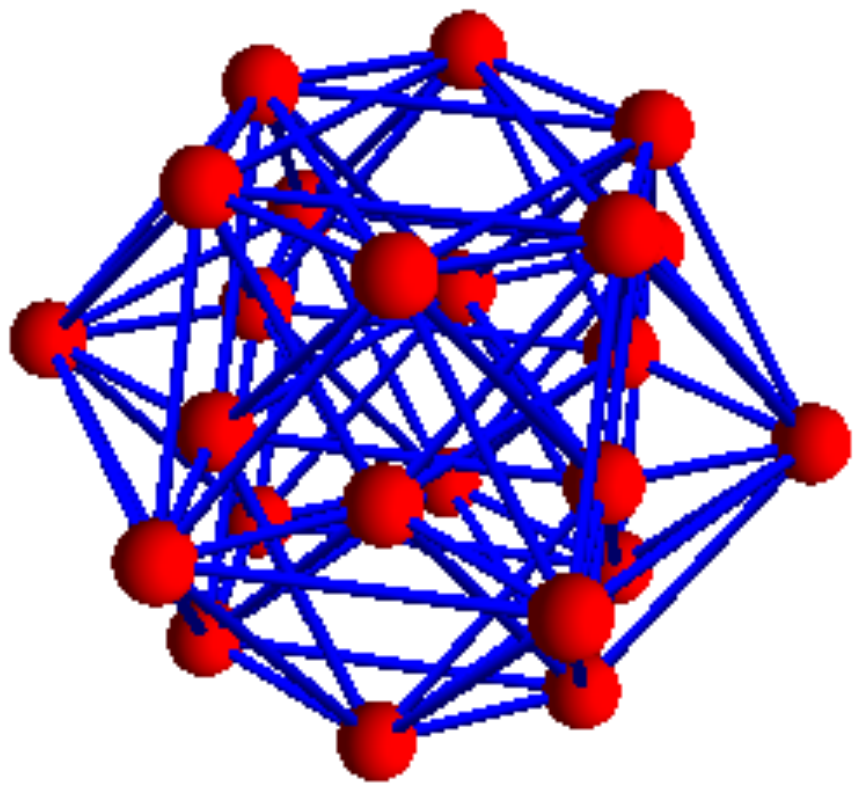}} \\
\parbox{2.3cm}{120 cell of dim 1 and 4 discrete points as unit sphere} & \scalebox{0.10}{\includegraphics{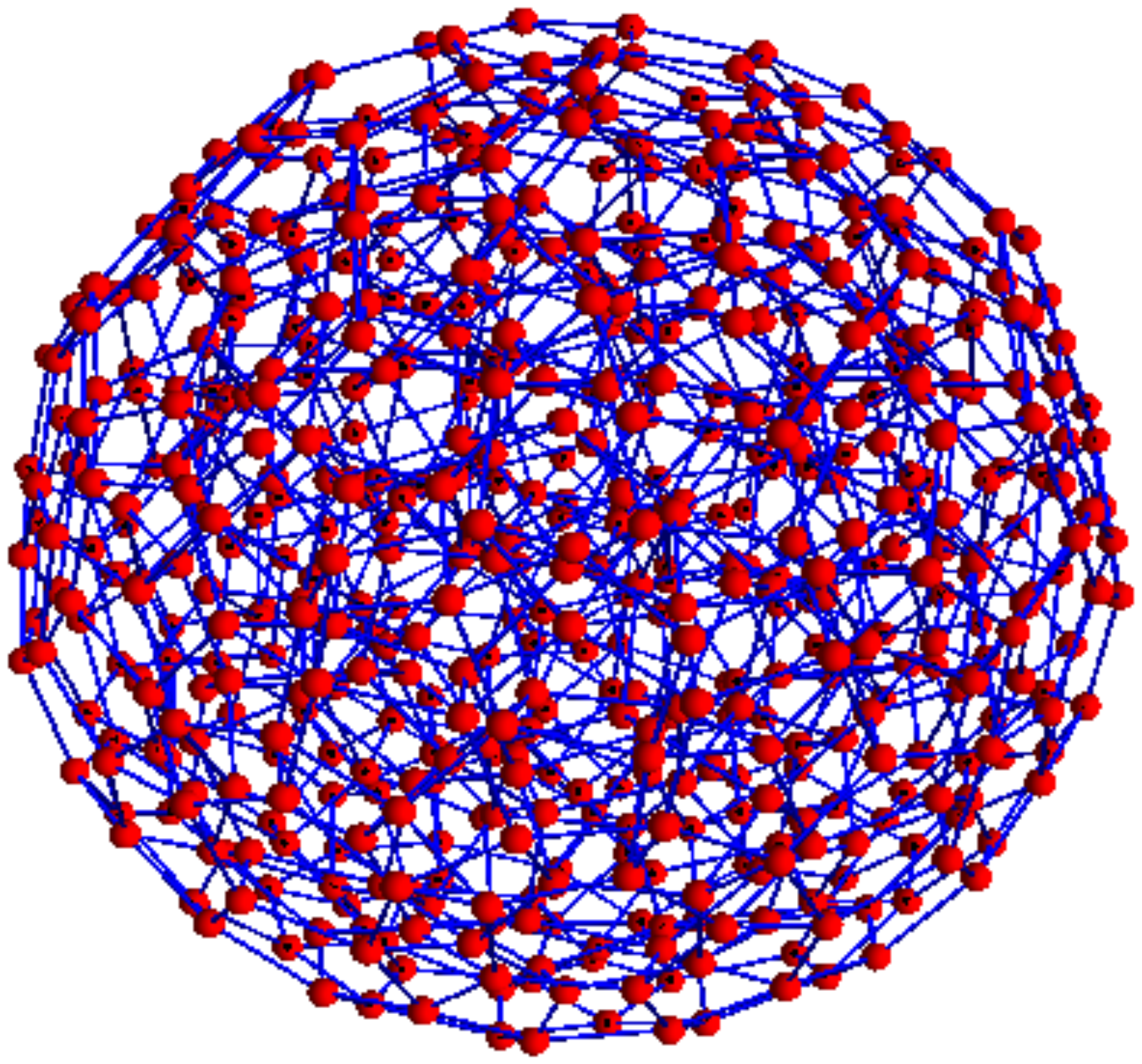}} \\
\end{tabular} 
}
}
\caption{The $6$ polytopes which can be realized as convex regular polytopes in $R^4$.
Only the 600 cell with icosahedral unit spheres and the 16 cell with octahedral unit spheres 
have dimension $3$. }
\end{figure}


{\bf Example 3.5.} The {\bf hypercube} is usually realized in ${\bf R}^4$.
This {\bf 8-cell} or {\bf tesseract} is initially one-dimensional since each unit sphere consists
of 4 disjoint points. It has initially 16 vertices, 32 edges, 24 faces and 8 cells, the Euler characteristic is 
$16-32+24-8=0$. We add on each of the 24 faces an additional vertex and connect it to the four old vertices. Then
we add 8 more vertices at the center of the cells and connect it to the 14 neighboring vertices. Now, we have a three
dimensional graph. The new data are $v=16+24+8=48, e=32+24 \cdot 4+14 \cdot 8=240, f=24 \cdot 4+48+24=168, c=26$.
The unit spheres consist of 24 stellated two-dimensional cubes or octahedra. The octahedra are the
unit spheres of the newly added faces, while the stellated cubes are centered at the original cell centers or vertices. 
The Euler curvature form for $d=3$ is zero. 
For the octahedron we have $E=12,F=8$ and for the stellated cube, we have $E=36$ and $F=24$. \\

{\bf Example 3.6.} The {\bf 24 cell} has 24 vertices, 96 faces, 96 edges and 24 cells. 
It is a selfdual graph but only two dimensional, because each unit sphere is a cube which is
one dimensional. Again, we can produce a three dimensional graph from it by adding the centers 
of each square. Now the unit spheres are stellated cubes with $V=14$ vertices, $E=36$ edges and and $F=24$ faces.
The Euler curvature form of the three dimensional completion of the 24 cell is still zero.
The 24 cell can be used to tesselate four dimensional space 
in an icositetrachoric honeycomb. Also this is a self-dual situation and by doing identifications
we can use this to generate triangularizations of the three dimensional torus. \\

Note that what we would consider three dimensional are denoted four dimensional polytopes 
because they are are polytopes realized in $R^4$. Or polyhedra in $R^3$ are called
Examples of Archimedean polytopes are illustrated in \cite{symmetries}. We think of them as three
dimensional because a stellation or snub operations produce three dimensional graphs from them. 

\subsection{Four dimensional graphs}

Since the curvature form vanishes identically in three dimensions, 
the four dimensional situation is the first really interesting case for Gauss-Bonnet-Chern 
beyond two dimensions. Assume $p$ is a vertex in a four dimensional graph $G$. 
Let $V(p)$ denote the number of vertices in $S_1(p)$, let $E(p)$ the number
of edges in $S_1(p)$, let $F(p)$ the number of faces in $S_1(p)$ and let $C(p)$ 
denote the number of three dimensional chambers in $S_1(p)$. 
The transfer equations are \\

\begin{center}
\begin{tabular}{lll} 
$\sum V = 2e$   &  sphere vertices $\sim$ edges in graph  &  edges are counted 2 times   \\ 
$\sum E = 3f$   &  sphere edges $\sim$ faces on graph     &  faces are counted 3 times   \\ 
$\sum F = 4c$   &  sphere faces $\sim$ chambers on graph    & chambers are counted 4 times   \\ 
$\sum C = 5s$   &  sphere chambers $\sim$ graph spaces   &  spaces are counted 5 times   \\ 
\end{tabular}
\end{center}

\vspace{5mm}

Here is the computation done in the proof of the theorem in the special case $d=4$.
Similarly as any face is bordered by $3$ edges, every chamber is border by $4$ faces, 
every hyper chamber is bordered by $5$ chambers which each are counted twice. We have therefore 
the {\bf hyper relations} $5s = 2c$. The hyper relations on each three dimensional sphere are $4C = 2F$.
The Euler characteristic on the three dimensional sphere is zero: $V-E+F-C = 0$. 
Use the transfer relations with 
$$ v - e + f - c + s  = \chi $$
to get
$$ \sum 1 - \sum \frac{V}{2} + \sum \frac{E}{3} - \sum \frac{F}{4} + \sum \frac{C}{5} = \chi  \; . $$
Add 
$$          \sum \frac{V}{2} - \sum \frac{E}{2} + \sum \frac{F}{2} - \sum \frac{C}{2} = 0     $$
to get 
$$  \sum 1 - \sum \frac{E}{6} + \sum \frac{F}{4} - \sum \frac{3C}{10}     = \chi   \; . $$
With $C=F/2$ we end up with 
$$  \sum (1 - \frac{E}{6} + \frac{F}{10}) = \chi(G)  \; . $$

{\bf Example 4.1.} The {\bf 4-dimensional simplex} has the data
$v=5,e=10,f=10,c=5,s=1, V=4,E=6,F=4,C=1, \chi=2$. The Euler curvature form takes the value
$(1-6/6+4/10)=4/10$ everywhere. Since there are 5 vertices, we confirm
$\sum (1-1+4/10) = 5 \cdot 4/10 = 20/10 = 2$. \\

{\bf Example 4.2.} The four-dimensional {\bf cross-polytope} or {\bf orthoplex} 
can be realized as a regular convex polyhedron in $R^5$. It is dual to the {\bf 5-cube}, the {\bf penteract}.
From the simplex data \\

\begin{center} 
\begin{tabular}{ll}
vertices  &  $v=10=2 \cdot 5 1= 10$  \\
edges     &  $e=40=4 \cdot 10 = 40$  \\
faces     &  $f=80=8 \cdot 10 = 80$ \\
chambers  &  $c=80=16 \cdot 5 = 80$ \\
spaces    &  $s=32=32 \cdot 1 = 32$ 
\end{tabular}
\end{center}

we get $\chi = v-e+f-c+s=2$.  \\
Now, each unit sphere has $8$ vertices which form a 4-cross polytope. The Euler curvature form 
is constant $1-\frac{E}{6} + \frac{F}{10} = 1-24/6+32/10 = 1/5$. 
There are 10 vertices and the total Euler curvature form is $2$ which agrees with 
the Euler characteristic. \\

\begin{figure}
\scalebox{0.25}{\includegraphics{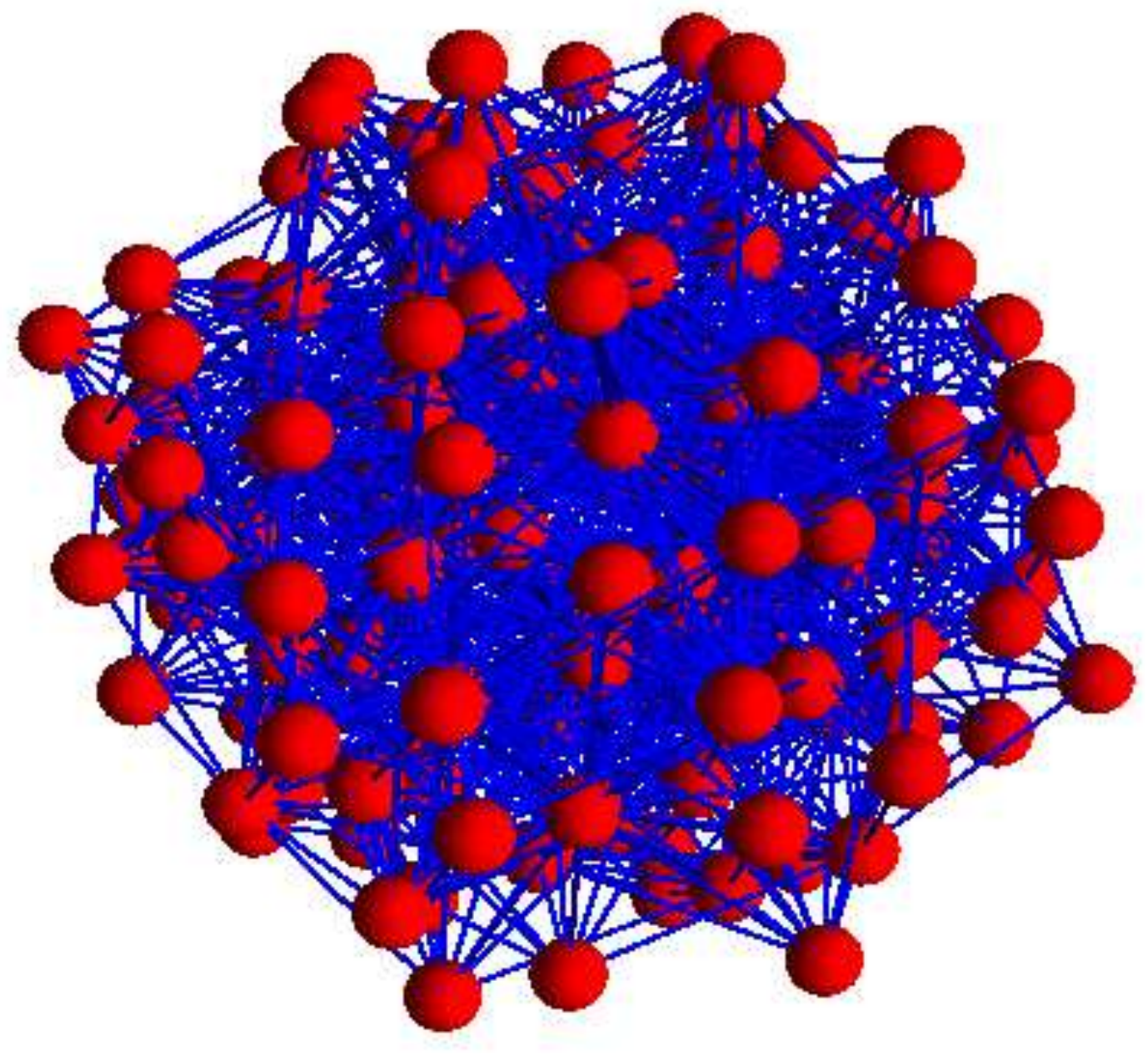}}
\scalebox{0.25}{\includegraphics{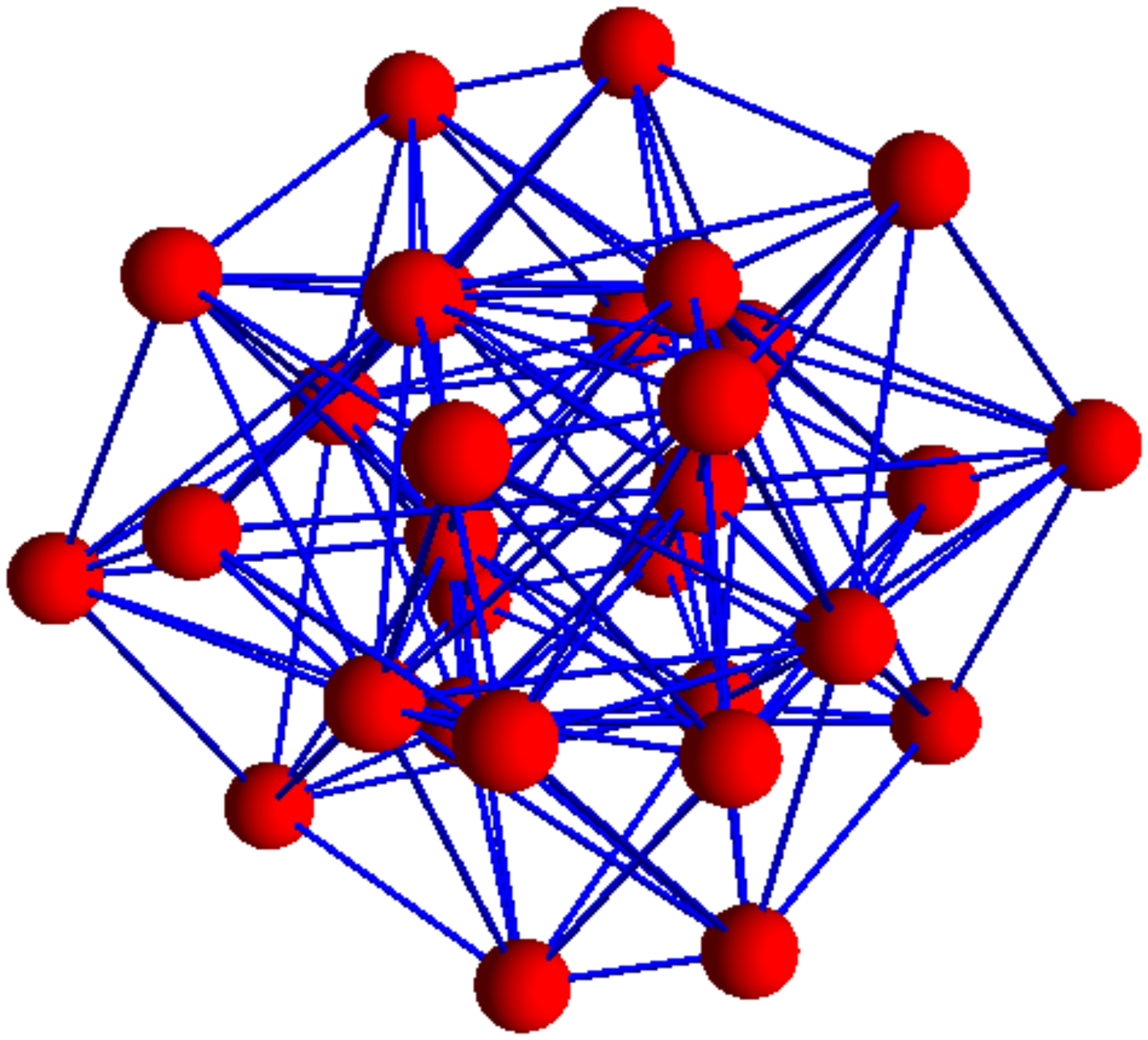}}
\caption{The 4-dimensional stellated cube to the left. To the right 
we see one of its unit spheres, a three dimensional graph with $30$ vertices
and $150$ edges. It has 120 cells but is not a regular 120 cell. 
Its unit spheres are either two dimensional stellated cubes or then 
the bipyramid construction applied to $C_6$. }
\label{4dsphere}
\end{figure}

{\bf Example 4.3.} The 4-dimensional cube realized in ${\bf R}^5$ as 
$\{-1,1 \; \}^5$ is one-dimensional because every vertex has a sphere $S_1$ which consists
of $5$ isolated points. The cube has 10 spaces: when realized in $R^5$ as above, the coordinates choice 
to $1$ or $-1$ determines these chambers centered at 
$$(\pm 1,0,0,0,0),(0,\pm 1,0,0,0),(0,0,\pm 1,0,0),(0,0,0,\pm 1,0),(0,0,0,0,\pm 1) \; , $$
then $40=4 \cdot 10$ chambers where two of the $6$ coordinates are nonzero 
and $80=8 \cdot 10$ faces where 3 of the coordinates are nonzero and
$80=16 \cdot 5$ edges (fix 4 of the coordinates) and $32=32 \cdot 1$ vertices: 
The Euler characteristic is $32-80+80-40+10=2$.  \\

\begin{center}
\begin{tabular}{lll} 
vertices  &   $(\pm 1,\pm 1,\pm 1,\pm 1,\pm 1 )$                                                 & $v=32=1 \cdot 2^5$  \\
edges     &   $(\pm 1,a,b,c,d)$, etc                                                             & $e=80=5 \cdot 2^4$  \\
faces     &   $(\pm 1,\pm 1,a,b,c)$ etc                                                          & $f=80=10 \cdot 2^3$ \\
chambers  &   $(\pm 1,\pm 1,\pm 1,a,b)$                                                          & $c=40=10 \cdot 2^2$ \\
spaces    &   $(\pm 1,\pm 1, \pm 1,\pm 1,a)$                                                     & $s=10=10 \cdot 2^1$ \\  
\end{tabular}
\end{center}

In order to make it a 4-dimensional graph, we first stellate each of the
two dimensional faces, then each three dimensional chambers, then each of the 4 dimensional 
hyper chambers. We get concrete vertices by averaging over other vertices: \\

\begin{center}
\begin{tabular}{lll}
Original vertices &   $(\pm 1,\pm 1,\pm 1,\pm 1,\pm 1 )$                                             & $32 = 1 \cdot 2^5$       \\
edge centers      &   $(0,a,b,c,d )$ etc                                                             & $80 = 5 \cdot 2^4$       \\
face centers      &   $(0,0,a,b,c )$ etc                                                             & $80 = 10 \cdot 2^3$      \\
chamber centers   &   $(0,0,0,b,c )$ etc                                                             & $40 = 10 \cdot 2^2$      \\
space   centers   &   $(0,0,0,0,c )$ etc                                                             & $10 = 5 \cdot 2^1$       \\
\end{tabular}
\end{center}

The stellated cube is seen in Figure~(\ref{4dsphere}).
The Euler characteristic is $v-e+f-c+s =32-80+80-40+10  = 2$. 
In this case, the Euler form curvature function
$K=1-\frac{E}{6} + \frac{F}{10}$ is non-constant: \\

\begin{center} \begin{tabular}{llllll}
$v$& $V$&  $E$& $F$ & $S$  &  $K$       \\ \hline
80 & 10 &  34 & 48  &  24  &  2/15     \\
40 & 16 &  64 & 96  &  48  & -1/15     \\
32 & 30 & 150 & 240 & 120  &     0     \\
10 & 48 & 240 & 384 & 192  &  -3/5     \\
\end{tabular}\end{center} 

\vspace{4mm}
The total curvature is $\sum_p K(p) = 80 \cdot (2/15) + 40 (-1/15) + 32 \cdot 0 + 10 (-3/5) = 2$, which agrees with the 
Euler characteristic $\chi(G)$. \\

{\bf Example 4.4.} The 4 dimensional octahedron is obtained from the 3-dimensional octahedron with 
the {\bf bi-pyramid construction}.  Lets compute the curvature $K=1-E/6+F/10$ for a four dimensional 
graph $G$ which is obtained from a 3 dimensional graph $H$ by the bi-pyramid construction.
We have $K(p)=K(q) = 1-e'/6+f'/10$ where $v',e',f'$ are 
the vertex, edge and face cardinalities of the three dimensional graph $H$. 
If $w$ is in $H$ which has a unit sphere which is a two dimensional closed graph
with cardinalities $V',E',F'$. Then $E=E'+2V'$ and $F=F'+2E'$ so that 
$K(w) = 1-E/6+F/10=1-(E'+2V')/6+(F'+2E')/10$. Because for two dimensional graphs with Euler characteristic $2$,
we have $E'=(V'-2) 3$ and $F'=(V'-2) 2$, we see that $K(w) = (2-V'/6)/5$, where $V'$ are the number of vertices in
the two dimensional graph $s(w)$, the unit sphere of $w$ in $H$.  \\

Assume $H$ is a $3$-dimensional graph with $v$ vertices, $e$ edges and $f$ faces and if $k(w)= |S_1(w)|$ is
the cardinality of the unit sphere, then the Euler form of the four dimensional bipyramid construction is
$K(p)=K(q) = 1-e/6+f/10$ and $K(w) = 2/5-k(w)/30$ for $w \in H$.  \\

{\bf Example 4.5.} For the $4$ dimensional cross polytope with $10$ vertices, where for the unit sphere 
$E=24,F=32$, we have $K(q)= 1-24/6+32/10=1/5$ at every old point and  $K(w)=2/5-6/30=1/5$ for the newly added points. \\

{\bf Example 4.6.} We see that if the three dimensional graph $H$ has two dimensional unit spheres with 
more than 12 points, then the Euler curvature form of the four dimensional graph $G$ is negative on the 
additional points $p,q$.  
It follows that if we know the eigenvalues of the adjacency matrix $A$ of the graph $H$ and if
we know the degree matrix $B$ then we know the Euler curvature form of $G$ from the spectral data. 
The reason is that we know $e={\rm tr}(A^2)$ and $f={\rm tr}(A^3)$ and $k(w)$ and so the 
Euler curvature form. 

\subsection{Five dimensional graphs}

In 5 dimensions the transfer equations are
$$ \sum V = 2e , \sum E = 3f, \sum F = 4c, \sum C = 5s, \sum S = 6 t  \; . $$
We know

\begin{tabular}{ll}
6 t = 2 s          &       5D paces are bordered by 6D spaces counted twice  \\ 
5 S = 2 C          &       the hyper surface relation on the sphere          \\ 
\end{tabular}

and

\begin{tabular}{ll}
$v - e + f - c + s - t = \chi$     &    Euler characteristic                  \\ 
$V - E + F - C + S = 2 = \chi_1$   &    Euler characteristic of 4D sphere     \\ 
\end{tabular}

Filling in the transfer equations into the Euler characteristic and 
dividing the equation for $\chi_1=2$ by 2 gives \\

\begin{eqnarray*}
\sum 1 - \sum V/2 + \sum E/3 - \sum F/4 + \sum C/5 - \sum S/6   &=& \chi \\ 
         \sum V/2 - \sum E/2 + \sum F/2 - \sum C/2 + \sum S/2   &=& 1  \; .   \\ 
\end{eqnarray*}

Adding them up and using 
$\sum S = (2/5) \sum C$ gives   
$$ \chi = \sum E (\frac{1}{3}-\frac{1}{2}) - \sum F (\frac{1}{4}-\frac{1}{2}) 
                             + \sum C (\frac{1}{5} - \frac{1}{2}) - \sum C (\frac{2}{5}) (\frac{1}{6}-\frac{1}{2}) 
        = \sum (-\frac{E}{6} + \frac{F}{4} - \frac{C}{6}) \; . $$
While we know that the Euler characteristic $\chi(G)$ is zero, it is not clear whether for any $5$ dimensional graphs 
$3 F=2C+2E$ is necessary. In other words, is the Euler curvature form zero in general in five or any odd dimensions? \\

{\bf Example 5.1.} For a 5-dimensional simplex, we have $v=6,e=15,f=20,c=15,s=6, V=5,E=10,F=10,C=5, \chi=2$. 
The Euler curvature form is constant zero here: $1-E/6+F/10 = 0 $. This is not a graph as defined here because
the unit spheres are 4-dimensional simplices of Euler characteristic $1$. \\

{\bf Example 5.2.} The 5 dimensional octahedron is also called 6-cross polytope and is dual to the 6-cube, 
the "hexeract". \\

\parbox{12.8cm}{
\parbox{6cm}{
\begin{center} \begin{tabular}{lll}
Vertices  & $v=12=2 \cdot 6$ \\
Edges     & $e=60=4 \cdot 15$ \\
Faces     & $f=160=8 \cdot 20$ \\
\end{tabular}\end{center}
}
\parbox{6cm}{
\begin{center} \begin{tabular}{lll}
chambers  & $c=240=16 \cdot 15$ \\
spaces    & $s=192=32 \cdot 6$ \\
halls     & $h=64=64 \cdot 1$ \\
\end{tabular}\end{center}
}}

Its Euler characteristic is  $\chi=v-e+f-c+s-h = 12-60+160-240+192-64 = 0$ as for any $5$ dimensional spherical graph. 
We have $V=10,E=40,F=80,C=80,S=32$ and $K= -E/6 + F/4 - C/6 = 0$.  \\

{\bf Example 5.3} The 5 dimensional rectified hexeract is an example of an uniform 6-polytop which
can be built in $R^6$ by taking all points $(\pm 1,\pm 1,\pm 1,\pm 1,\pm 1,0)$ as well as all 
cyclic permutations and connecting two vertices if the distance is $\sqrt{2}$.
The graph obtained has $6 \cdot 2^5=192$ vertices and $960$ edges.
Each unit sphere is a union of two disjoint four dimensional disjoint $K_5$'s. 
While $\chi(S_1(p))=2$ for all $p$, these spheres are not connected and the graph does not
satisfy the assumptions we have imposed. \\

{\bf Example 5.4.} The $d=5$-dimensional stellated cube $G$ is obtained by 
stellating the ${d \choose 1} 2^1=12$ halls, the ${6 \choose 2}  2^2=60$ four dimensional 
chambers, the $160 = {6 \choose 3} 2^3$ three dimensional spaces and $240={6 \choose 4} 2^4$ faces. 
Together with the initial $2^6=64$ vertices, his gives a total of 
$536=64 +240 + 160 + 60 + 12$ vertices. 
The fully triangulated graph $G$ is now $5$ dimensional and has $8216$ edges. 
The original cube is a $5$ dimensional polytop in a graph theoretical sense. 
There are $5$ different type of vertices in $G$. Their unit spheres $S(p)$ 
are four dimensional graphs with the following data: \\

\begin{center}
\begin{tabular}{lllllll}
 spheres&   V &   E  &    F &   C  & H    & K \\   \hline
    64  &  62 & 540  & 1560 & 1800 & 720  & 0 \\
   240  &  18 &  96  &  224 &  240 &  96  & 0 \\
   160  &  20 & 126  &  324 &  360 & 144  & 0 \\
    60  &  50 & 336  &  864 &  960 & 384  & 0 \\
    12  & 162 &1440  & 4160 & 4800 & 1920 & 0 \\
\end{tabular}
\end{center}
This example was realized in the computer to check whether $K$ is identically zero. 
While in the continuum, the Euler curvature is not defined in odd dimensions,
it is here in the discrete, where $K$ could a priori be interesting.
It seems that $K$ is identically zero for odd dimensional graphs however.

\subsection{Six dimensional graphs}

Lets start with the computation again in the $6$ dimensional case: \\

From $v-e+f-c+s-t+u=\chi, V-E+F-C+S-T=0$ we get
$$ \sum 1 + E (\frac{1}{3}-\frac{1}{2}) - F(\frac{1}{4}-\frac{1}{2}) + C (\frac{1}{5}-\frac{1}{2}) 
          - S (\frac{1}{6}-\frac{1}{2}) + T (\frac{1}{7}-\frac{1}{2}) \; .  $$
Now use the hyper relations $\sum T = \sum S/3$ and 
$$  (\frac{1}{7}-\frac{1}{2}) (\frac{1}{3}) 
  - (\frac{1}{6}-\frac{1}{2}) = \frac{3}{14} $$
to get
$$ \sum (1-\frac{E}{6}+\frac{F}{4}-\frac{3C}{10} + \frac{3S}{14}) = \chi \; . $$

{\bf Example 6.1.} The $6$ dimensional octahedron is also called 7-crosspolytope. It is 
a $7$-polytope which can be realized as a convex
solid in $R^7$ with vertices $[\pm 1,0,0,0,0,0,0]$ etc. 
It is the analogue of the octahedron. The unit spheres are
$6$-cross polytopes, which in turn then have $5$-cross polytopes as unit spheres etc, 
until we reach the $2$-cross polytope, the square. 

\begin{center}
\begin{tabular}{ll}
vertices  & $v=14=2 \cdot 7$ \\
edges     & $e=84=4 \cdot 21$ \\
faces     & $f=280=8 \cdot 35$ \\
chambers  & $c=560=16 \cdot 35$ \\
spaces    & $s=672=32 \cdot 21$ \\
5-halls   & $h=448=64 \cdot 7$ \\
6-halls   & $n=128=128 \cdot 1$ 
\end{tabular}
\end{center}

Its Euler characteristic is  $\chi=v-e+f-c+s-h+n 
= 14-84+280-560+672-448+128 = 2$ as for any $6$-dimensional spherical graph. 
We have $V=12,E=60,F=80,C=80,S=32$ and $K= 1-E/6+F/4 - 3C/10 + 3S/14 = 1/7$. 
There are 14 vertices and the total curvature is
$2$ and agrees with the Euler characteristic. \\

{\bf Example 6.2} The rectified $7$-demicube or demihepteract is realized $R^7$ with
coordinates $[\pm 1,\pm 1,\pm 1,\pm 1,\pm1,\pm1,\pm1]$ with an even number of $1$.
The graph is not $6$ dimensional however since the unit sphere of a point of the 
unit sphere of a point on that unit sphere can become disconnected. 

\subsection{Seven dimensional graphs}

For seven dimensional graphs, the Euler curvature form is $K(p) = -E/6+F/4-3C/10+S/3-H/4$, where $C$ is
the number of three dimensional chambers, $S$ is the number of four dimensional spaces  and 
$H$ the number of $5$ dimensional halls in $S_1(p)$.  Also here, if the graph is a triangularization
of a $7$ dimensional orientable manifold, then the sum of the curvatures is zero by Poincar\'e duality.  \\

{\bf Example 7.1.} The $8$-crosspolytope $G$ can be realized as a convex polyhedron in $R^8$. It
has Euler characteristic $0$. Since all unit spheres are isomorphic graphs, the curvature form has to be zero
everywhere. Here are the simplicial subgraphs of $G$: \\

\parbox{12.8cm}{
\parbox{6cm}{
\begin{center} \begin{tabular}{ll}
Vertices  & $v=16=2 \cdot 8$   \\
edges     & $e=112=4 \cdot 28$  \\
faces     & $f=448=8 \cdot 56$ \\
chambers  & $c=112=16 \cdot 70$ \\
\end{tabular} \end{center}
}
\parbox{6cm}{
\begin{center} \begin{tabular}{ll}
spaces    & $s=1792=32 \cdot 56$ \\
5-halls   & $h=1792=64 \cdot 28$  \\
6-halls   & $n=1024=128 \cdot 8$ \\
7-halls   & $u=256=256 \cdot 1$  \\
\end{tabular} \end{center}
} }

\vspace{3mm}

Its Euler characteristic is  $\chi=v-e+f-c+s-h+n-o = 0$ as for any odd dimensional spherical graph.
We have $V=14,E=84,F=280,C=560,S=672,H=448$ and $K=-E/6+F/4-3C/10+S/3-H/4=0$. The total curvature is
$0$ and agrees with the Euler characteristic. \\

We do not have an example yet of a 7-dimensional graph, where $K$ is different from zero at some vertices. 



\end{document}